\newtheorem{definition}{Definition}[section]
\newtheorem{remark}[definition]{Remark}
\newtheorem{example}[definition]{Example}
\newenvironment{ack}{\noindent{\bf Acknowledgements}.}{}
\newtheorem{lemma}[definition]{Lemma}
\newtheorem{proposition}[definition]{Proposition}
\newtheorem{theorem}[definition]{Theorem}
\newtheorem{corollary}[definition]{Corollary}
\def\P{{\mathbb{P}}}
\def\A{{\mathbb{A}}}
\def\K{{\mathbb{K}}}
\def\kK{{\mathcal{K}}}
\def\N{{\mathbb{N}}}
\def\I{{\mathcal{I}}}
\def\T{{\underline{T}}}
\def\t{{\underline{t}}}
\def\X{{\underline{X}}}
\def\x{{\underline{x}}}
\def\u{{\underline{u}}}
\def\Z{{\mathbb{Z}}}
\def\C{{\mathrm{C}}}
\def\V{{\mathrm{V}}}
\def\F{{\underline{F}}}
\def\bdeg{{\mbox{bideg}}}
\def\aa{{\bf{a}}}
\def\cc{{\bf{c}}}
\def\dd{{\bf{d}}}
\def\ee{{\bf{e}}}
\begin{document}
\title{rational plane curves parameterizable by conics}

\author{Teresa Cortadellas Ben\'itez}
\address{Universitat de Barcelona, Departament d'{\`A}lgebra i Geometria.
Gran Via 585, E-08007 Barcelona, Spain}
\email{terecortadellas@ub.edu}

\author{Carlos D'Andrea}
\address{Universitat de Barcelona, Departament d'{\`A}lgebra i Geometria.
Gran Via 585, E-08007 Barcelona, Spain} \email{cdandrea@ub.edu}
\urladdr{http://atlas.mat.ub.es/personals/dandrea}
\thanks{Both authors are supported by the Research Project MTM2010--20279 from the
Ministerio de Ciencia e Innovaci\'on, Spain}

\subjclass[2010]{Primary 14H50; Secondary 13A30}

\begin{abstract}
We introduce the class of rational plane curves parameterizable by conics as an extension of the family of curves parameterizable by lines (also
known as monoid curves). 
We show that they are the image of monoid curves  via suitable quadratic transformations in projective plane. 
We also describe all the possible proper parameterizations of them, and a set of minimal generators of the Rees Algebra associated to these parameterizations, 
extending well-known results for curves parameterizable by lines.
\end{abstract}
\maketitle

\section{Curves parameterizable by forms of low degree}\label{uno}
This article deals with algebraic and geometric features of a special familiy of rational plane curves. 
Let $\K$ be an algebraically closed field. For a positive integer $k,$ we will denote with $\P^k$  the $k$-dimensional projective 
space over $\K$. Let $\C\subset \P^2$
be an algebraic plane curve of degree $d$, that is the the zero locus of
an irreducible homogeneous polynomial $E(X_1,X_2,X_3)\in\K[X_1,X_2,X_3]$ of
degree $d$.

A curve is \emph{rational} if it is birationally equivalent to $\P^1$, i.e.\
there exist dominant rational maps $\phi:\P^1 \to \C$ and $\psi:
\C\dasharrow \P^1$ such that $\psi \circ \phi =id_{\P^1}$ and $\phi
\circ \psi =id_{\C};$ equivalently there is an open subset of $\C$
 isomorphic to an open subset of $\P^1$
(or $\A^1$). If this is the case, the cardinality of the
general fiber of $\phi $ and $\psi $ is equal to one. So, $\phi$  
actually defines a \emph{proper} (i.e.\ generically injective)
parameterization of $\C$. Note that $\psi$'s \emph{domain} (the
largest set where the map is defined) coincides with the set of
nonsingular points of $\C$.

 A pair
$(F_1(X_1,X_2,X_3),\,F_2(X_1,X_2,X_3))$ of polynomials
in $\K[X_1,X_2,X_3],$ homogeneous of the same degree $d'$ without common factors
defines a birational map
\begin{equation}\label{inverse}
\begin{array}{cccc}
\psi:&\C&\dashrightarrow &\P^1\\
&(x_{1}:x_{2}:x_{3})&\mapsto&(F_1(x_{1},x_{2},x_{3}):F_2(x_{1},x_{2},x_{3}))
  \end{array}
 \end{equation}
 if  there exist a triple
$(u_1(T_1,T_2),\,u_2(T_1,T_2),\,u_3(T_1,T_2))$ of homogeneous
polynomials  without common factors in $\K[T_1,T_2]$
defining a parameterization of $C$ of the form
\begin{equation}\label{param}
\begin{array}{cccc}
\phi:&\P^1&\to&\C\\
&(t_{1}:t_{2})&\mapsto&(u_1(t_{1},t_{2}):u_2(t_{1},t_{2}):u_3(t_{1},t_{2})),
  \end{array} \end{equation}
with $\phi =\psi^{-1}$  as rational maps. Note that $\phi$ is globally defined but $\psi$ not necessarily. In fact, it is well-known
(see Lemma \ref{silverman}) that the set of singular points of $\C$ is contained in the algebraic variety defined by $F_1(\X)$ and $F_2(\X)$ in $\P^2$. 
Note that the inclusion may be strict, see for instance Example \ref{ejj}.   Set
$\t:=(t_1,t_2)$ and $\x:=(x_1,x_2,x_3)$.  The birationality of $\psi$ is equivalent to the following two claims:
\begin{equation}\label{u2}
(u_1(F_1(\x),F_2(\x)):u_2(F_1(\x),F_2(\x)):u_3(F_1(\x),F_2(\x))
)=(x_1:x_2:x_3)
\end{equation}
 for almost all $(x_1:x_2:x_3) \in \C $, and
\begin{equation}\label{u1}
 \big(F_1(u_1(\t),u_2(\t),u_3(\t)):F_2(u_1(\t),u_2(\t),u_3(\t))\big)=(t_1:t_2)\\
  \end{equation}
 for almost all $(t_1:t_2) \in \P^1.$ Note that the expressions in the left hand side of \eqref{u2} and \eqref{u1} are well-defined as the
families of polynomials are homogeneous.

Set $\T:=(T_1,T_2),\,\X:=(X_1,X_2,X_3),\,\u(\T)=(u_1(\T),\,u_2(\T),\,u_3(\T))$ and $\F(\X)=(F_1(\X),F_2(\X)).$ If \eqref{inverse} holds, we say
that $\C$ is \emph{parameterizable} by $\F(\X)$ (or by $\psi$)
and that $\u(\T)$ (or $\phi$) is the
proper parameterization induced by $\F(\X)$.
\medskip

Note that \eqref{u1} is equivalent to
\begin{equation}\label{u}
T_1F_2(\u(\T))-T_2F_1(\u(\T))=0,\end{equation}
and it turns out that \eqref{u} implies \eqref{u2}. This is clear 
 if the
characteristic of $\K$ is zero, and reasoning as in 
Proposition $2.1$ \cite{CD10} for $\psi$ birational, one gets the general case.

In order to find $\u(\T)$ starting from the data $\psi$ given in \eqref{inverse}, some geometry is needed. For a set of homogeneous elements 
$S\subset\K[\X]$, we denote with $\V(S)\subset\P^2$ the variety defined by it.
The fact that $\C=\V(E(\X))$ is parameterizable
by $\F(\X)$  means that the system
\begin{equation}\label{dprima}
\left\{\begin{array}{lcc}
E(\X)&=&0\\
T_1F_2(\X)-T_2F_1(\X)&=&0
         \end{array}
\right.
\end{equation}
has only one solution in $\P^2_{\overline{\K(\T)}}\setminus \P^2$ counted with multiplicities, or
equivalently 
has
 $dd'-1$ zeroes in $\P^2$ counted with
multiplicities. Here, $\P^2_{\overline{\K(\T)}}$ is the projective plane over $\overline{\K(\T)}$, the algebraic closure of $\K(\T)$. Note that our definition is not
the same as the one given in \cite[Definition 4.51]{SWP08} but a
more restrictive one as  shown in \cite[Theorem 4.54]{SWP08}.

\par 
From a computational point of view, a curve in the plane is typically given by either its implicit equation $E(\X)$ or ---if it is rational--- a parameterization
like \eqref{param}. Whether there exists a proper parameterization of $\C$ and, if this is the case, the computation of $\psi$ having as input $\phi$ or viceversa, 
are typical problems
of Computational Algebraic Geometry, see \cite{SWP08} and the references therein for more on the subject.

Let us consider the situation from a more algebraic perspective. Set $R:=\K[\T],$ and let $I$ be the ideal 
$\langle
u_1(\T),u_2(\T),u_3(\T) \rangle \subset R.$ The Rees
Algebra associated to $I$ is defined as $\mbox{Rees}(I):=\K[\T][I\,Z],$  where
$Z$ is a new variable. There is a graded epimorphism of $K[\T]$-algebras
defined by
\begin{equation}\label{ris}
\begin{array}{cccc}
   {\mathfrak h}:&\K[\T][\X]&\to&\mbox{Rees}({\mathcal I})\\
&X_i&\mapsto&u_i(\T)\,Z.
  \end{array}
\end{equation}
Set $\kK:=\ker ({\mathfrak h})$. Note that a description of $\kK$ allows also a full characterization of
$\mbox{Rees}({\mathcal I})$ via \eqref{ris}. This is why we call it \emph{the defining ideal of the Rees Algebra} associated to $\u(\T)$.
Condition \eqref{u}  is equivalent to the fact that
$T_1F_2(\X)-T_2F_1(\X)\in\kK.$ \par
Observe that $\kK$ is a bihomogeneous ideal, and that one has an identification of
$\kK_{\ast,1}$ with $\mbox{Syz}(I),$ the first module of syzygies of $I$.
It turns out that $\mbox{Syz}(I)$ is a free $R$-module of rank $2$ generated by two
elements, one of $\T$-degree $\mu$ for an integer $\mu$ such
that $0\leq\mu\leq\frac{d}{2}$, and the other of $\T$-degree $d-\mu$.
In the Computer Aided Geometric Design community, such a basis is called a $\mu$-basis of $I$ (see for instance \cite{CSC98,CGZ00,CCL05}). Indeed, by the
Hilbert-Burch Theorem, $I$ is generated by the maximal order
minors of a $3\times 2$ matrix $\varphi$ and  the
 homogeneous resolution of $I$ is
\begin{equation}\label{sec}
0\longrightarrow R(-d-\mu)\oplus R(-d-(d-\mu))
\stackrel{\varphi}{\longrightarrow} R(-d)^3
\stackrel{(u_1,u_2,u_3)}{\longrightarrow} I\longrightarrow  0.\end{equation}
This matrix is called the Hilbert-Burch matrix of $I$ and its
columns describe the $\mu$-basis.
\par Computationally, a $\mu$-basis provides simple (i.e.\  in both $(\T,\X)$-degrees) elements to describe the parameterization of $\C$
given in \eqref{param} than the data $\u(\T)$. The search for more simple elements to describe $\C$ leads to the study of 
the minimal generators of $\kK$. Indeed, the so-called method of implicitization by using moving curves of low degrees (described in \cite{SC95, SGD97, ZCG99}) 
is just a first step into a more complex picture which was described by Cox in \cite{cox08}, and subsequently worked out in 
\cite{CHW08,HSV08, KPU09, HSV09,bus09, HW10,CD10} among others. However, we are still far from being able to describe minimal generators of 
$\kK$ for a general ideal of a parametric plane curve $I$ as above. This paper is a contribution
in that direction. We will make a detailed study of rational curves parameterizable by forms of degree $2$, i.e.\  the situation $\deg(\F(\X))=2$
in \eqref{inverse}. 
The case of curves parameterizable by forms of degree $1$ has been completely described in \cite{cox08,bus09}.

\smallskip
Before starting, we present some results concerning existence and uniqueness of the polynomials $F_1(\X),\,F_2(\X)$ defining \eqref{inverse} for a fixed $\C$. 
Any rational plane curve $\C$ is parameterizable  by forms of degree
$d'$ for some $d'$. As a matter of fact, the method of adjoint curves proposed in \cite{walker} to parameterize any rational curve produces a map $\psi$ as 
in \eqref{inverse}, with $\F(\X)$ of degree less than or equal to $\deg(\C)-2$. The following result shows that if
$d'<\frac{\deg(\C)}{2}$, then not only $d'$ is unique but also the ideal
$\langle F_1(\X),\,F_2(\X)\rangle$.
\begin{proposition}\label{d'}
Let  $\C$ be a curve of degree $d$ parameterizable  by $(F_1(\X),\,F_2(\X))$, with $\deg(F_i(\X))=d'$.
Suppose that $\C$ is also parameterizable by $(F^0_1(\X),\,F^0_2(\X))$, the latter being forms of degree
$d_0'$ with both $d',\,d_0'<d$. Then, either $d'+d_0'\geq d$ or
$d'=d_0'$ and $\langle
F_1(\X),\,F_2(\X)\rangle=\langle F^0_1(\X),\,F^0_2(\X)\rangle$.
\end{proposition}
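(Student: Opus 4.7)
The plan is to combine the two inversion relations into a single bihomogeneous expression that must vanish on $\C$, and then extract the desired dichotomy from factorization in the UFD $\K[\X]$.

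First, I would rewrite the fact that both $\F(\X)=(F_1,F_2)$ and $\F^0(\X)=(F^0_1,F^0_2)$ invert the same proper parameterization $\u(\T)$ of $\C$ as polynomial identities. From \eqref{u1} applied to each pair, $(F_1(\u(\T)):F_2(\u(\T)))=(T_1:T_2)$ in $\K(\T)$, and since $\gcd(T_1,T_2)=1$ there exist homogeneous $\lambda(\T),\lambda_0(\T)\in\K[\T]$ (of $\T$-degrees $dd'-1$ and $dd'_0-1$ respectively) such that
\[
F_i(\u(\T))=\lambda(\T)\,T_i\quad\text{and}\quad F^0_i(\u(\T))=\lambda_0(\T)\,T_i, \qquad i=1,2.
\]

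Next, I would form
\[
G(\X):=F_1(\X)F^0_2(\X)-F_2(\X)F^0_1(\X)\in\K[\X],
\]
which is homogeneous of degree $d'+d'_0$, and observe that $G(\u(\T))=\lambda(\T)\lambda_0(\T)(T_1T_2-T_2T_1)=0$. Hence $G$ vanishes on the dense subset $\phi(\P^1)\subset\C$, so either $G\equiv 0$ or $E(\X)\mid G(\X)$; in the latter case a degree comparison gives $d'+d'_0\geq d$, which is the first alternative in the statement.

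Finally, in the remaining case $G\equiv 0$ I would work in the UFD $\K[X_1,X_2,X_3]$. The identity $F_1F^0_2=F_2F^0_1$ together with $\gcd(F_1,F_2)=1$ forces $F_1\mid F^0_1$, while symmetrically $\gcd(F^0_1,F^0_2)=1$ forces $F^0_1\mid F_1$. Hence $F^0_1=cF_1$ for some $c\in\K^\ast$, which yields $d'=d'_0$, then $F^0_2=cF_2$ by substitution, and therefore $\langle F_1,F_2\rangle=\langle F^0_1,F^0_2\rangle$. The whole argument is essentially formal once the identities $F_i(\u(\T))=\lambda(\T)T_i$ are in hand; the only point requiring care is making the dichotomy ``either $G\equiv 0$ or $E\mid G$'' rigorous and ensuring that the coprimality hypotheses on \emph{both} pairs $\F$ and $\F^0$ are genuinely used, so I do not expect a serious obstacle.
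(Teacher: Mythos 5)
There is a genuine gap at the very first step. You apply \eqref{u1} to both pairs with the \emph{same} proper parameterization $\u(\T)$, writing $F_i(\u(\T))=\lambda(\T)T_i$ and $F^0_i(\u(\T))=\lambda_0(\T)T_i$. But the hypothesis only says that $\C$ is parameterizable by each pair, and each pair induces its \emph{own} proper parameterization: $\u(\T)$ for $\F$ and $\u^0(\T)$ for $\F^0$. These are related by $\u^0(\T)=\u(\rho(\T))$ for some automorphism $\rho$ of $\P^1$, so what is true is $F^0_i(\u(\T))=\tilde\lambda(\T)\,\sigma_i(\T)$ with $(\sigma_1,\sigma_2)$ the linear forms of $\rho^{-1}$, not $T_i$ itself. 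Consequently your key cancellation fails: $G(\u(\T))=\lambda(\T)\tilde\lambda(\T)\bigl(T_1\sigma_2(\T)-T_2\sigma_1(\T)\bigr)$, which is generally nonzero, and the dichotomy ``$G\equiv0$ or $E\mid G$'' breaks down. A concrete counterexample to your intermediate claims: take the monoid parameterization $u_1=T_1a(\T)$, $u_2=T_2a(\T)$, $u_3=b(\T)$ of a curve of degree $d\geq 3$, which is parameterizable both by $(X_1,X_2)$ and by $(X_2,X_1)$; here $G=X_1^2-X_2^2$ neither vanishes identically nor is divisible by $E(\X)$, yet the proposition's conclusion (equality of the ideals) holds. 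The symptom is visible in your final step as well: you conclude $F^0_i=cF_i$, i.e.\ proportionality of the individual forms, which is strictly stronger than the statement; equality of ideals must allow for an invertible $\K$-linear substitution of the pair, exactly the freedom coming from $\rho$.

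The missing idea is precisely the paper's opening move: observe that $\psi\circ\phi^0$ is an automorphism of $\P^1$, write $\u^0(\T)=\u(\rho(\T))$, and then express $T_1,T_2$ in terms of $\rho_1(\T),\rho_2(\T)$ to replace $(F^0_1,F^0_2)$ by another basis $(F'_1,F'_2)$ of the same ideal satisfying $T_1F'_2(\u(\T))-T_2F'_1(\u(\T))=0$. Once this normalization is made, your argument is essentially the paper's: form $F'_1F_2-F'_2F_1$, note it vanishes on $\C$, compare degrees to get $d'+d_0'\geq d$ or identical vanishing, and in the latter case use coprimality of each pair in the UFD $\K[\X]$ to get mutual divisibility and hence $d'=d_0'$ and equality of the ideals $\langle F_1,F_2\rangle=\langle F'_1,F'_2\rangle=\langle F^0_1,F^0_2\rangle$. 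So the skeleton of your proof is right, but without the automorphism/basis-change step it proves a false identity and does not establish the proposition.
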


\begin{proof}
Let $\phi(\t):=\u(\t)$ and $\phi^
0(\t):=\u^0(\t),$ be the proper parameterizations
of $\C$ induced respectively by $(F_1(\X),\,F_2(\X))$ and $(F^0_1(\X),F^0_2(\X))$. Denote with $\psi$ the inverse of $\phi$.
Then, $\psi \circ \phi^0$ is an automorphism of $\P^1$, and hence there
exists a pair $(\rho_1(\T),\rho_2(\T))=:\rho(\T)$ of $\K$-linearly independent linear forms such that
$\u^0(\T)=\u(\rho(\T))$.

Let $\u^ 0(\T):=(u^0_1(\T), u^0_2(\T), u_3^0(\T))$ be the
parameterization induced by $(F^0_1,F^0_2)$. From \eqref{u} we have
$T_1F^0_2(\u(\rho(\T)))-T_2F^0_1(\u(\rho(\T)))=0.$ And
by writing  $T_1$ and $T_2$ as linear combinations of
$\rho_1(\T)$ and $\rho_2(\T)$, we get
\[
\rho_1(\T)F'_2(\u(\rho(\T)))-\rho_2(\T)F'_1(\u(\rho(\T)))=0
\]
with $\langle F^0_1(\X),\,F^0_2(\X)\rangle=\langle
F'_1(\X),\,F'_2(\X)\rangle$. As $\rho$ is an automorphism, we deduce
\[
T_1F'_2(\u(\T))-T_2F'_1(\u(\T))=0.\]

This equality, combined with $T_1F_2(\u(\T))-T_2F_1(\u(\T))=0$
implies that the polynomial $F'_1(\X)F_2(\X)-F'_2(\X)F_1(\X)$
vanishes on $\C$. As this is an element of degree $d'+d_0'$ and $\C$
has degree $d$, if $d'+d_0'<d$, then we have that
$$F'_1(\X)F_2(\X)-F'_2(\X)F_1(\X)=0.$$

Now, using the fact that $F_1(\X)$ and $F_2(\X)$ do not share any common factor,  we
deduce that $F_i(\X)$ divides $F'_i(\X)$ for $i=1,2$,  so $d'\leq
d_0'$ and
\[
\langle F_1^0(\X), F_2^0(\X) \rangle=\langle F_1'(\X), F_2'(\X)
\rangle\subset\langle F_1(\X),\,F_2(\X) \rangle .\] Applying 
the same argument symmetrically, we conclude that $d_0'\leq
d'$, and hence $$\langle F_1(\X),F_2(\X)\rangle\subset\langle
F^0_1(\X),\,F^0_2(\X)\rangle.$$
\end{proof}

\smallskip
If we restrict our attention to the set of curves parameterizable by forms of degree $d'$ for a fixed value of $d'$,
the following  natural questions arise:
\begin{itemize}
\item Can we describe geometrically all of them?
\item What does a proper parameterization of a curve
in this family look like?
\item Given $u_1(\T),u_2(\T),u_3(\T)\in K[\T]$ parametrizing  a plane curve parameterizable by forms of degree $d'$, can we describe  the
minimal homogeneous free resolution of 
 $\langle u_1(\T),\,u_2(\T),u_3(\T) \rangle $?
\item  Given $u_1(\T),u_2(\T),u_3(\T)\in K[\T]$ as above, can we describe a minimal set of generators
of $\kK$  ?
 \end{itemize}

An already interesting case is when $d'=1$. Such curves are called in \cite{SWP08}
\textit{parameterizable by lines}. Other authors call them \textit{monoid curves} (\cite{JLP08}). The answer to all these questions are well-known for them. 
We will review them along the text in order to compare
them with the main focus of this paper, which is $d'=2$. We will refer to them as \emph{curves parameterizable by conics}. In Section \ref{sconics}
we will describe all possible proper parameterizations of them, and also compute a non-trivial multiple of its implicit equation. Most of the time, this 
polynomial will actually be the one defining its implicit equation and, when it is not the case, the implicit equation will be given by its irreducible factor of
 largest degree (see Theorem \ref{complete}).
\par In Section  \ref{geometry}, we describe geometrically the space of all curves parameterizable by conics. In Theorem \ref{mtmt} we show that
they are the image of curves parameterizable by lines via a quadratic birational transformation of $\P^2$. Not surprisingly, 
the type of quadratic transformation depends on the geometry of the variety defined by $F_1(\X),\,F_2(\X)$ in $\P^2$.
\par Then we turn to study the last of the questions above. In Section \ref{s1} we present an extension of some of the tools used in \cite{CD10} for  
curves parameterizable by lines, to a
more general context. These extended tools will be used in Section \ref{Rees} to exhibit a complete set of generators of $\kK$ for proper parameterizations
of curves parameterizable by conics.  Curiously, the description of the generators depends on whether the degree of $\C$ is even or odd. In the first
case, a ``moving conic'' arising from the classical method of implicitization with the aid of moving curves comes into play (see Proposition \ref{mconic}).
\par It is worth mentioning here that the results in Sections \ref{s1} and \ref{Rees} are independent of the previous sections, so the reader interested  in
the questions related to the Rees Algebra can skip the first pages without harm. Of course it would be very interesting to get a further
understanding of the situation for $d'\geq3$, but our techniques only allow us to deal with curves parameterizable by conics. In Section \ref{conclu}, we conclude
with open questions and problems.

\medskip

\begin{ack}
We are very grateful to E.~Casas-Alvero for a careful reading of a preliminary draft of this paper, and also for patiently 
explaining us several features of the geometry of plane curves, in particular for helping us work out the 
canonical forms of Lemma \ref{reduccion}, and the quadratic
transformations  appearing in Section \ref{geometry}. We are also grateful to
J.~C.~Naranjo, J.~I.~Burgos and the anonymous referee for helpful comments and suggestions, and to the anonymous referee for several suggestions and corrections in the final version of this text. Our computations have been done with the aid of the softwares {\tt Macaulay 2}, {\tt Maple}, and 
{\tt Mathematica.}
\end{ack}

\bigskip
\section{Parameterizations and implicit equations of curves parameterizable by lines and conics}\label{sconics}
In this section we will explore algebraic aspects of curves
parameterizable by forms of degrees $1$ and $2$. They will be useful when studying geometric
properties of the singularities of these curves. The  case of curves parameterizable by lines is well-known in the literature. We review it here in
order to compare it with curves parameterizable by conics. Curves of degree $1$ (lines in $\P^2$) are easily to describe so we will assume from now on
that $d\geq2$.

\smallskip
\subsection{Curves parameterizable by lines}\label{monoid}
We start with the following result which characterizes curves parameterizable by lines having $(0:0:1)$ as a point of maximal multiplicity. Without loss of 
generality, we can assume that the inverse $\psi$ defined in \eqref{inverse} is given by $F_1(\X)=X_1,\,F_2(\X)=X_2.$

\begin{proposition}
Let $a(\T),\,b(\T)\in\K[\T]$ be homogeneous polynomials without common
factors, of degrees $d-1$ and $d>1$ respectively. Set
\begin{equation}\label{monn}
\left\{\begin{array}{ccl}
u_1(\T)&:=&T_1\,a(\T),\\
u_2(\T)&:=&T_2\,a(\T),\\
u_3(\T)&:=&b(\T).
         \end{array}
\right.
\end{equation}
Then, $\u(\T):=(u_1(\T),\,u_2(\T),\,u_3(\T))$ defines a proper
parameterization of curve $\C$ of degree $d$ parameterizable by lines having $(0:0:1)\in\C$ of multiplicity $d-1.$ Moreover, $b(X_1,X_2)-a(X_1,X_2)X_3$ is an
irreducible polynomial defining $\C.$  This curve is parameterizable by $(X_1,X_2)$. Reciprocally, any curve defined implicitly as
$b(X_1,X_2)-a(X_1,X_2)X_3=0$ in $\P^2$ with $a(\T),\,b(\T)$ as
above, is a curve parameterizable by lines with $(0:0:1)\in\C$ having multiplicity $d-1$.
\end{proposition}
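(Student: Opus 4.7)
The plan is to verify both directions by direct computation, with the forward direction essentially a substitution check and the converse a projection argument. For the forward direction, I would first check that $u_1(\T), u_2(\T), u_3(\T)$ share no common factor: any common divisor must divide both $T_1 a(\T)$ and $T_2 a(\T)$, hence $a(\T)$, and then also $b(\T)$, contradicting coprimeness. Next I would substitute $\u(\T)$ into $E(\X) := b(X_1,X_2) - a(X_1,X_2) X_3$ and use homogeneity of $a$ (degree $d-1$) and $b$ (degree $d$) to obtain
\[
b(T_1 a, T_2 a) - a(T_1 a, T_2 a)\, b(\T) = a(\T)^d\, b(\T) - a(\T)^{d-1} a(\T)\, b(\T) = 0,
\]
so the image of $\phi$ lies in $\V(E)$. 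Irreducibility of $E$ is immediate because $E$ is linear in $X_3$: a nontrivial factorization would yield a factor in $\K[X_1,X_2]$ dividing both $a$ and $b$. Hence $\C = \V(E)$ has degree $d$.

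For the multiplicity at $(0:0:1)$, I would dehomogenize at $X_3 = 1$ to get $E(X_1,X_2,1) = b(X_1,X_2) - a(X_1,X_2)$, whose lowest-degree nonzero homogeneous component is $-a(X_1,X_2)$ of degree $d-1$; thus $(0:0:1)$ has multiplicity $d-1$. To confirm properness, I would set $\psi(\x) := (x_1 : x_2)$ and compute $\psi \circ \phi(\T) = (T_1 a : T_2 a) = (T_1 : T_2)$, while for $\x \in \C$ with $a(x_1,x_2) \neq 0$ the defining equation yields $b(x_1,x_2) = a(x_1,x_2)\, x_3$, so $\phi \circ \psi(\x) = (x_1 a : x_2 a : b) = (x_1 : x_2 : x_3)$. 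Hence $\phi$ is a proper parameterization, and the inverse $\psi$ is induced by $(X_1, X_2)$.

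For the converse, given $E(\X) = b(X_1,X_2) - a(X_1,X_2) X_3$ irreducible with $a, b$ coprime of degrees $d-1$ and $d$, the same dehomogenization shows that $(0:0:1) \in \V(E)$ has multiplicity $d-1$. To exhibit the parameterization, I would project $\C$ from $(0:0:1)$: by B\'ezout each line through this point meets $\C$ in exactly $d - (d-1) = 1$ further point, and parametrizing the pencil by direction $(T_1:T_2)$ reduces to intersecting the line $\{(T_1 : T_2 : s) : s \in \K\}$ with $\C$, which gives $s = b(T_1,T_2)/a(T_1,T_2)$ and the parameterization $(T_1 a : T_2 a : b)$ as claimed. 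None of these steps is a genuine obstacle; the only subtle point is tracking degrees during the dehomogenization to read off the multiplicity at $(0:0:1)$, after which everything reduces to routine substitution.
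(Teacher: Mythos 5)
Your argument is correct, but it follows a genuinely different route from the paper's. The paper proves the forward direction by writing $b(\T)=b_1(\T)T_1+b_2(\T)T_2$, exhibiting the Hilbert--Burch matrix of $\langle u_1,u_2,u_3\rangle$ explicitly, reading off from its first column the linear syzygy (so $\mu=1$), and then invoking Proposition 2.1 and Lemma 2.5 of \cite{CD10} to get birationality with inverse $(X_1,X_2)$ and the implicit equation $b(X_1,X_2)-a(X_1,X_2)X_3$; the converse is then obtained by running the forward direction on the data $a,b$. You instead give a self-contained elementary verification: the gcd check, the substitution $E(\u(\T))=a^d b-a^{d-1}ab=0$, irreducibility of $E$ from its linearity in $X_3$ together with $\gcd(a,b)=1$, the multiplicity $d-1$ read off from the lowest-degree form of the dehomogenization $b-a$, the explicit two-sided composition $\psi\circ\phi=\mathrm{id}$ and $\phi\circ\psi=\mathrm{id}$ on $\{a\neq 0\}$, and, for the converse, projection from $(0:0:1)$ to recover the parameterization $(T_1a:T_2a:b)$. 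What your approach buys is independence from \cite{CD10} and from any syzygy machinery, plus a geometric derivation of the parameterization in the converse; what the paper's approach buys is the explicit $\mu$-basis/Hilbert--Burch data ($\mu=1$), which is precisely the structural information the rest of the paper is organized around and is reused when contrasting with the case $\mu=\lfloor d/2\rfloor$ for curves parameterizable by conics. One small point worth making explicit in your write-up: to conclude $\C=\V(E)$ you should note that $\overline{\phi(\P^1)}$ is a curve rather than a point (immediate, e.g., from your computation $(u_1:u_2)=(T_1:T_2)$ or from $\psi\circ\phi=\mathrm{id}$), so that it must coincide with the irreducible curve $\V(E)$; with that sentence added the proof is complete.
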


\begin{proof}
Write $b(\T)=b_1(\T)T_1+b_2(\T)T_2.$ It is then easy to see that
the matrix
$$\varphi:=\left(\begin{array}{cc}
T_2& b_1(\T)\\
-T_1& b_2(\T)\\
0& -a(\T)
        \end{array}
\right)
$$
is the Hilbert-Burch matrix of the ideal $\langle
u_1(\T),\,u_2(\T),\,u_3(\T)\rangle\subset\K[\T]$, as in \eqref{sec}. By looking at
the $\T$-degree of the first column, we get that $\mu=1$, i.e.\  there is a
generator of the $\mbox{Syz}(I)$ of $\T$-degree one.
Proposition $2.1$ in \cite{CD10} tell us then that $\u(\T)$ defines a
birational map $\phi:\P^1\longrightarrow \C:=\phi (\P^1)$  whose inverse is given by $(X_1,X_2)$. In particular, $\phi$ is a
proper parameterization of a curve of degree $d$ having with $(0:0:1)\in\C$
having multiplicity $d-1.$ The fact that the implicit equation is
given by $b(X_1,X_2)-a(X_1,X_2)X_3$ was shown in \cite[Lemma
$2.5$]{CD10}.
\par
The rest of the proof follows straightforwardly: given $a(\T),\,b(\T)\in\K[\T]$ homogeneous without 
common factors and with respective degrees $d-1,\,d$. With this data we define
the parameterization \eqref{monn} and then we will find that the
implicit equation of $\C$ is given by the irreducible polynomial
$b(X_1,X_2)-a(X_1,X_2)X_3$.
\end{proof}

\smallskip
\subsection{Curves parameterizable by conics}\label{deg2}
In order to mimic the results obtained above, by making a linear
change of coordinates in $\P^2$ we start by assuming that
$(0:0:1)\in\V(\F(\X))$. Set ${\mathcal
F}(\T,\X):=T_1F_2(\X)-T_2F_1(\X)$, and write
\begin{equation}\label{conicg}
{\mathcal
F}(\T,\X)=l_1(\T)X_1X_2+l_2(\T)X_1X_3+l_3(\T)X_2X_3+l_4(\T)X_1^2+l_5(\T)X_2^2,
\end{equation}
with $l_i(\T)$ a homogeneous linear form in $\K[\T],\ i=1, 2,
3, 4, 5$.

\begin{proposition}\label{prop22} The conic defined by ${\mathcal F}(\T,\X)$ in
$\P^2_{\overline{\K(\T)}}$  is degenerate if and only if each
$F_i(\X)$ is the product of two linear forms in $\K[X_1,X_2]$. If this is 
the case, there is a curve $\C$ parameterizable by
$\F(\X)$ if and only if  $\C$ is either a line or parameterizable by lines.
\end{proposition}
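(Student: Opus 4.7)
The plan is to prove the two equivalences of the statement one at a time.

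For the first equivalence, I would start from the expansion \eqref{conicg} and write down the symmetric matrix $M$ of the conic ${\mathcal F}(\T,\X)$. Since $(0{:}0{:}1)\in\V(\F)$, the coefficient of $X_3^2$ in ${\mathcal F}$ is zero, so the $(3,3)$-entry of $M$ vanishes; a Laplace expansion along the last row then yields
\[
\det M = \tfrac14\bigl(l_1 l_2 l_3 - l_2^2 l_5 - l_3^2 l_4\bigr),
\]
a cubic form in $\K[\T]$. The ``if'' direction is immediate: if each $F_i$ lies in $\K[X_1,X_2]$ (equivalently, has no monomials involving $X_3$), then $l_2=l_3=0$ in \eqref{conicg}, so $\det M\equiv 0$ and the conic degenerates.

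For the ``only if'' direction, assume the cubic $l_1 l_2 l_3 - l_2^2 l_5 - l_3^2 l_4$ vanishes identically in $\T$, and aim to show that $l_2=l_3=0$. The plan is a brief case analysis using the rewriting $l_3(l_1l_2-l_3l_4)=l_2^2 l_5$. If exactly one of $l_2,l_3$ is zero, say $l_2=0$ and $l_3\neq 0$, the identity collapses to $l_3^2 l_4=0$, forcing $l_4=0$; reading back off the coefficients in \eqref{conicg}, one sees that $X_2$ divides both $F_1$ and $F_2$, contradicting coprimality of $(F_1,F_2)$. The case $l_3=0$, $l_2\neq 0$ is symmetric and produces $X_1$ as a common factor. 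If both $l_2,l_3$ are nonzero, divisibility of $l_2^2$ in the right-hand side combined with the linearity of the $l_i$'s forces $l_4=0$ together with $l_1 l_3 = l_2 l_5$; from this proportionality one can explicitly exhibit a shared linear form in $\K[X_1,X_2,X_3]$ (of the form $\alpha X_2+X_3$ or $X_1+cX_2$, according to the relative position of $l_2$ and $l_3$) dividing both $F_1$ and $F_2$, again contradicting coprimality. Hence $l_2=l_3=0$ and $F_i\in\K[X_1,X_2]$.

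For the second equivalence, the key observation is that when $F_i\in\K[X_1,X_2]$ the rational map $\psi$ factors as $\psi=g\circ\pi$, where $\pi:\P^2\dashrightarrow\P^1$ is the projection from $(0{:}0{:}1)$ defined by $(x_1{:}x_2{:}x_3)\mapsto(x_1{:}x_2)$, and $g:\P^1\to\P^1$ is the degree-$2$ map associated to the coprime pair of binary quadratics $(F_1(Y_1,Y_2), F_2(Y_1,Y_2))$. For a plane curve $\C$ of degree $d$ and multiplicity $m$ at $(0{:}0{:}1)$, Bézout gives $\deg(\pi|_{\mathcal C})=d-m$, hence $\deg(\psi|_{\mathcal C})=2(d-m)$; the map $\psi|_{\mathcal C}$ is therefore essentially the degree-$2$ extension of the linear map $\pi|_{\mathcal C}$. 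The parameterization condition then reduces to requiring that the underlying linear projection $\pi|_{\mathcal C}$ be birational, i.e.\ $d-m=1$; this happens exactly when $\C$ is a line (the case $d=1$, $m=0$) or a monoid curve (the case $m=d-1$, as recalled in Subsection~\ref{monoid}). This is the asserted characterization.

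The main obstacle is the case analysis in the first equivalence: it requires carefully pinning down, in each subcase where $l_2$ or $l_3$ fails to vanish, an explicit common linear factor of $F_1$ and $F_2$ so as to contradict the standing coprimality hypothesis.
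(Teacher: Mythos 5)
Your reduction of the first equivalence to showing $l_2=l_3=0$ is sound, and the two subcases where exactly one of $l_2,l_3$ vanishes are handled correctly, but the main subcase $l_2\neq0,\ l_3\neq0$ rests on a false deduction: the vanishing of $l_1l_2l_3-l_2^2l_5-l_3^2l_4$ does \emph{not} force $l_4=0$ and $l_1l_3=l_2l_5$. Take $l_1=T_2,\ l_2=T_1,\ l_3=T_2,\ l_4=T_1,\ l_5=0$; the cubic vanishes identically, yet $l_4\neq0$ and $l_1l_3\neq l_2l_5$. Here ${\mathcal F}(\T,\X)=(X_1+X_3)(T_1X_1+T_2X_2)$, i.e.\ $F_1=-X_2(X_1+X_3)$, $F_2=X_1(X_1+X_3)$, so the common linear factor one must exhibit is $X_1+X_3$, which is of neither of the two shapes ($\alpha X_2+X_3$ or $X_1+cX_2$) your argument allows. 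What the identity really yields, after splitting according to whether $l_2$ and $l_3$ are proportional, is only that $l_4$ is a scalar multiple of $l_2$ (and correspondingly for $l_5,l_1$), from which one can factor out of ${\mathcal F}$ a $\T$-free linear form possibly involving all three variables; so your strategy is repairable, but the pivotal step as written fails. A secondary point: writing the symmetric matrix with entries $l_i/2$ and using $\det M=0$ as the degeneracy criterion presupposes $\mathrm{char}\,\K\neq2$, whereas the paper's own proof (factor ${\mathcal F}={\mathcal A}{\mathcal B}$ over $\overline{\K(\T)}$, note one factor is free of $X_3$, descend to $\K[\T,X_1,X_2]$ and compare $\T$-degrees to contradict coprimality of $F_1,F_2$) is characteristic-free.

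The second equivalence is where the more serious gap lies. From your own factorization $\psi=g\circ\pi$ with $\deg g=2$, degrees multiply, so $\deg(\psi|_\C)=2(d-m)$ is always even (and $\psi|_\C$ is constant when $\C$ is a line through $(0{:}0{:}1)$); hence birationality of $\psi|_\C$ is \emph{not} equivalent to birationality of $\pi|_\C$, and the asserted reduction to ``$d-m=1$'' is a non sequitur. Read as a biconditional it would even claim that every line or monoid curve is parameterizable by any coprime pair $F_1,F_2\in\K[X_1,X_2]$, which is false: no curve at all is parameterizable by $(X_1^2,X_2^2)$ (cf.\ Remark \ref{elunico}). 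The paper argues differently: a curve parameterizable by $(F_1,F_2)$ with $F_i\in\K[X_1,X_2]$ produces a $\K(\T)$-rational root of the binary quadratic $T_1F_2(X_1,X_2)-T_2F_1(X_1,X_2)$; Gauss's lemma bounds the $\T$-degree of the coordinates of such a root by one, and comparison with the shape of the first two coordinates in \eqref{monn} gives that any such curve is a line or is parameterizable by lines. Only that direction is proved (and needed). Your degree count, correctly interpreted, actually shows that the hypothesis is never realized for coprime $F_1,F_2\in\K[X_1,X_2]$ --- consistent with Remark \ref{elunico} and the concluding section --- but it does not deliver the characterization you state, and the step you use to get there is incorrect.
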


\begin{proof}
If ${\mathcal F}(\T,\X)$ defines a degenerate conic then there exist ${\mathcal A}(\T,\X),\,{\mathcal B}(\T,\X)\in\overline{\K(\T)}[\X]$
homogeneous of $\X$-degree one such that
\begin{equation}\label{cua}
{\mathcal F}(\T,\X)={\mathcal A}(\T,\X)\,{\mathcal B}(\T,\X).
\end{equation}
As the left hand side has degree at most one in $X_3$, one of the factors on the right hand side do not depend on $X_3$. Suppose w.l.o.g.\ that 
$\deg_{X_3}({\mathcal A}(\T,\X))=0$, and write $$T_1F_2(\X)-T_2F_1(\X)={\mathcal F}(\T,\X)=Q(\T,X_1,X_2)+X_3\,L(\T,X_1,X_2),$$
with $Q(\T,X_1,X_2),\,L(\T,X_1,X_2)\in\K[\T,\X]$. If $L(\T,X_1,X_2)\neq0,$ then ---due to \eqref{cua}--- both 
polynomials $Q(\T,X_1,X_2)$ and $L(\T,X_1,X_2)$ will have
a non trivial common factor in $\overline{\K(\T)}[X_1, X_2]$. 
But this implies that they also share a common factor in $\K[\T,X_1, X_2]$, so a factorization as in  \eqref{cua} holds, with ${\mathcal A}(\T,\X)\,{\mathcal B}(\T,\X)\in \K[\T,X_1, X_2].$ Looking now at the degree in $\T$ in \eqref{cua}, we have that one of the two factors in the right hand side does not depend on $\T,$ which implies that $F_1(\X)$ and $F_2(\X)$ have a common factor of positive degree, a contradiction. Hence, $L(\T,X_1,X_2)=0$, which implies that $F_1(\X)$ and $F_2(\X)$ only depend on $X_1,\,X_2$, and 
they factorize as a product of linear forms, as $\K$ is algebraically closed.

The converse follows straightforwardly as $T_1F_2(X_1,X_2)-T_2F_1(X_1,X_2)$ factorizes as a product of two linear forms with coefficients in 
$\overline{\K(\T)},$ and hence they define a product of lines in $\P^2_{\overline{\K(\T)}}.$
\par
Now, suppose that
$F_1(\X),\,F_2(\X)\in\K[X_1,X_2]$. It is easy to see that here is a curve
parameterizable by these conics if and only if there is a solution
in $\P^1_{\K(\T)}$ of the equation $T_1F_2(X_1,X_2)-T_2F_1(X_1,X_2)=0$. By
dividing this equality by $X_2^2$, we get a quadratic equation in
$\frac{X_1}{X_2}$ whose coefficients are linear forms in $\T$. By
Gauss Lemma, any rational solution should have both numerator and
denominator being of $\T$-degree at most one. By looking at the shape of the first two coordinates of \eqref{monn}, we conclude that $\C$ is either a line or
parameterizable by lines.
\end{proof}

\begin{remark}\label{elunico}
If $T_1F_2(X_1,X_2)-T_2F_1(X_1,X_2)=0$ has no rational solutions in $\P^2_{\K(\T)},$
then there are no rational curves parameterizable by
$\F(\X)$. We will see below that this is actually the
only possible choice of a complete intersection of conics in $\P^2$ which does not
parameterize a curve $\C$.
\end{remark}

\medskip
Now we deal with nonsingular pencils of conics. We will
describe all the rational plane curves they produce by means of the
usual argument of cutting out the pencil with a moving line passing
through $(0:0:1)$. 

\begin{proposition}\label{paramparam}
Let $F_1(\X),\,F_2(\X)\in\K[\X]$ be homogeneous of degree $2$
without common factors such that  $(0:0:1)\in\V(\F(\X))$. If the
conic defined by ${\mathcal F}(\T,\X)$ in $\P^2_{\overline{\K(\T)}}$ is nondegenerate, then
for any pair $a(\T),\,b(\T)\in\K[\T]$ of homogeneous elements of
the same degree $d_0>1$ without common factors, the polynomials
\begin{equation}\label{paramm} \left\{
\begin{array}{ccl}
u_1(\T)&=&-a(\T)\big(a(\T)l_2(\T)+b(\T)l_3(\T)\big) \\
u_2(\T)&=&-b(\T)\big(a(\T)l_2(\T)+b(\T)l_3(\T)\big)\\
u_3(\T)&=& a(\T)b(\T)l_1(\T)+a(\T)^2l_4(\T)+b(\T)^2l_5(\T).
 \end{array}
\right.
\end{equation}
define a proper parameterization of a curve $\C$ parameterizable by
$\F(\X)$. Moreover, if $\gcd(X_1l_2(\F(\X))+X_2l_3(F(\X)),
a(\F(\X))X_2-b(\F(\X))X_1)=1,$ then $\gcd(\u(\T))=1$, and $\deg(\C)=2d_0+1$. Moreover, $a(\F(\X))X_2-b(\F(\X))X_1$ is an irreducible polynomial defining the
curve.
\end{proposition}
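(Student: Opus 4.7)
The formulas \eqref{paramm} arise geometrically: every conic of the pencil $\{\mathcal{F}(\T,\cdot)=0\}$ passes through $(0:0:1)\in\V(\F)$, and $\u(\T)$ records the residual intersection of the moving line $a(\T)X_2-b(\T)X_1=0$ with the corresponding conic. Guided by this, I first verify the identity \eqref{u} by direct substitution. Setting $M(\T):=a l_2+b l_3$ and $N(\T):=ab\,l_1+a^2 l_4+b^2 l_5$, so that $u_1=-aM$, $u_2=-bM$, $u_3=N$, expanding $\mathcal{F}(\T,\u(\T))$ via \eqref{conicg} yields
\[
l_1 u_1 u_2+l_2 u_1 u_3+l_3 u_2 u_3+l_4 u_1^2+l_5 u_2^2=M^2(l_1 ab+l_4 a^2+l_5 b^2)-MN(l_2 a+l_3 b)=M^2N-M^2N=0.
\]
Since $(u_1:u_2)=(a:b)$ defines a non-constant map $\P^1\to\P^1$ (as $\gcd(a,b)=1$ and $d_0>1$), $\u$ parametrizes a plane curve $\C$, and by the implication that \eqref{u} forces \eqref{u2} cited in the text, $\u$ is a proper parameterization of $\C$ with inverse $\F$. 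This settles the first half of the proposition.

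For the remaining claims, I propose $P(\X):=a(\F(\X))X_2-b(\F(\X))X_1$ as the implicit equation of $\C$. From \eqref{u} there is $\lambda(\T)\in\K[\T]$ with $F_i(\u(\T))=\lambda T_i$, hence $a(\F(\u))=\lambda^{d_0}a$ and $b(\F(\u))=\lambda^{d_0}b$. Substitution gives
\[
P(\u(\T))=\lambda^{d_0}\bigl(a\cdot u_2-b\cdot u_1\bigr)=\lambda^{d_0}\bigl(a(-bM)-b(-aM)\bigr)=0,
\]
so $P$ vanishes on $\C$ and the irreducible implicit equation $E$ of $\C$ divides $P$. Because $\gcd(a,b)=1$, one has $\gcd(u_1,u_2)=M$ and hence $\gcd(\u)=\gcd(M,N)$ in $\K[\T]$, so $\deg\C=(2d_0+1)-\deg\gcd(M,N)$. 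Matching this to $\deg P=2d_0+1$, the equalities $E=P$ (up to scalar), $\deg\C=2d_0+1$, $\gcd(\u)=1$, and irreducibility of $P$ all reduce to the single statement $\gcd(M,N)=1$.

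The main obstacle is therefore bridging the $\K[\X]$-coprimality hypothesis with the $\K[\T]$-statement $\gcd(M,N)=1$. Writing $\alpha(\X):=X_1 l_2(\F(\X))+X_2 l_3(\F(\X))$, the key bridge identities, checked by expansion, are
\begin{align*}
X_1 M(\F(\X)) &= a(\F)\,\alpha(\X)-l_3(\F)\,P(\X),\\
X_2 M(\F(\X)) &= b(\F)\,\alpha(\X)+l_2(\F)\,P(\X),
\end{align*}
together with the identity $X_3\alpha(\X)+X_1 X_2 l_1(\F)+X_1^2 l_4(\F)+X_2^2 l_5(\F)=0$ obtained by evaluating $\mathcal{F}(\F(\X),\X)=0$, which produces analogous relations involving $N(\F(\X))$. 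A non-constant common factor $h(\T)$ of $M,N$ would yield, via $\T\mapsto\F(\X)$, a common factor $h(\F(\X))$ of $M(\F)$ and $N(\F)$; any irreducible factor $p(\X)$ of $h(\F)$ not dividing $\alpha$ or $P$ would, by the bridge identities, have to divide one of the parasitic polynomials $a(\F),\,b(\F),\,l_i(\F)$, and a case-by-case analysis --- using $\gcd(a,b)=1$ together with the finiteness of $V(F_1,F_2)$ to exclude $p\mid\gcd(a(\F),b(\F))$ --- should eliminate each possibility, forcing $p\mid\gcd(\alpha,P)$ and contradicting the hypothesis. Once $\gcd(M,N)=1$ is in hand, the preceding paragraph finishes the proof.
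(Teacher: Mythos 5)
Your first half is fine (and is a legitimate variant of the paper's argument): the direct expansion showing ${\mathcal F}(\T,\u(\T))=0$, properness via the general fact that \eqref{u} yields the inverse $\F(\X)$, the identity $\gcd(\u(\T))=\gcd(M,N)$, and the bridge identities are all correct, though you should note explicitly that $M=a\,l_2+b\,l_3\not\equiv 0$ (this is exactly where nondegeneracy enters: otherwise $l_2=l_3=0$ and the image of $\u$ is a point, not a curve), and that $P(\X)=a(\F(\X))X_2-b(\F(\X))X_1\neq 0$ --- the paper devotes a paragraph to this, using that a common vanishing of $a(\F)$ and $b(\F)$ along a curve would force that curve inside the finite set $\V(\F(\X))$; without $P\neq 0$ you cannot pass from $E\mid P$ and $\deg P=\deg E$ to $P=cE$.

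The genuine gap is the last step, which is precisely the heart of the proposition: you never prove that $\gcd(X_1l_2(\F)+X_2l_3(\F),\,P)=1$ forces $\gcd(M,N)=1$; you only sketch a per-factor case analysis that ``should eliminate each possibility,'' and as structured it cannot work. From $p\mid M(\F)$ and $p\mid N(\F)$ your bridge identities give congruences whose ``determinant'' is $M(\F)\equiv 0\pmod p$, so they do not force $p\mid\alpha$ and $p\mid P$; and the cases $p\mid a(\F)$, $p\mid b(\F)$, $p\mid l_i(\F)$ cannot all be excluded. Indeed, in the paper's own example $\F(\X)=(X_1X_2,\,X_1X_3-X_2X_3)$ with $a=T_1^{d_0},\,b=T_2^{d_0}$ one has $h=\gcd(M,N)=T_1$, $h(\F)=X_1X_2$, and the irreducible factor $X_2$ of $h(\F)$ divides $a(\F)$ and $\alpha$ but not $P$; so an arbitrary irreducible factor of $h(\F)$ need not land in $\gcd(\alpha,P)$, and a per-factor elimination cannot produce the contradiction. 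What is actually needed is a global argument. The paper's proof supplies it geometrically: under the gcd hypothesis, a generic point of $\V(P)$ lying outside $\V(\F(\X))$ and outside $\V(\alpha)$ satisfies $(x_1:x_2)=(a(\F(\x)):b(\F(\x)))$ and $a(\F(\x))l_2(\F(\x))+b(\F(\x))l_3(\F(\x))\neq 0$, hence equals $\u(\F(\x))$ and lies on $\C$; therefore $P=E(\X)^\nu$ up to a constant, and the degree count $2d_0+1=\nu\big(2d_0+1-\deg\gcd(\u)\big)$ with $d_0>1$ forces $\nu=1$ and $\gcd(\u)=1$, which is exactly the conclusion you were trying to reach. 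Either reproduce that argument (or an equivalent one showing every irreducible factor of $P$ other than $E$ divides $\alpha$); as written, your proof is incomplete at its decisive point.
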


\begin{proof}
As $(0:0:1)$ is a rational point of the nondegenerate conic in $\P^2_{\overline{\K(\T)}},$ we
can describe all the other rational solutions by using a pencil of
lines passing through this point. In order to do that, given $a(\T),\,b(\T)\in\K[\T]$ homogeneous elements of
degree $d_0>1$ without common factors, consider
the system
$$\left\{\begin{array}{lcc}
{\mathcal F}(\T,\X)&=&0,\\
b(\T)X_1-a(\T)X_2&=&0.
        \end{array}
\right.
$$
It has two solutions in $\P^2_{\K(\T)}$, one of them being
$(0:0:1)$, so the other is also rational and by computing it
explicitly  we get that it is proportional to 
$\u(\T)$  in \eqref{paramm}.  As
$\gcd(a(\T),\,b(\T))=1$ and due to the fact that at least one
between $l_2(\T)$ and $l_3(\T)$ is not identically zero (this is because the conic
defined by ${\mathcal F}(\T,\X)$ in $\P^2_{\overline{\K(\T)}}$ is
nondegenerate), we then have that \eqref{paramm} defines the
parameterization of a rational plane curve $\C$, which turns out to be parameterizable by
$\F(\X)$.  Hence, the parameterization is proper.
\par\smallskip

Let $E(\X)\in\K[\X]$ be an irreducible polynomial defining $\C$. For $(x_1:x_2:x_3)\in \C$ we have
$b(\F(\x))x_1-a(\F(\x))x_2=0$, which implies that $b(\F(\X))X_1-a(\F(\X))X_2$ is a multiple of $E(\X).$
In order to show that they are equal, first we will prove that the
latter is not identically zero. Indeed, if this were the case,
then there would exist $C(\X)\in\K[\X],$ homogeneous of degree
$2d_0-1>0$ such that
$$\begin{array}{ccl}
   a(\F(\X))&=&C(\X)X_1,\\
   b(\F(\X))&=&C(\X)X_2.
  \end{array}
$$
As $C(\X)$ has positive degree, there are infinite points $(x_1:x_2:x_3)\in\P^2$
such that $C(\x)=0$. For those points we will have
$a(\F(\x))=b(\F(\x))=0,$ but as $a(\T)$ and $b(\T)$ do not have
common zeroes in $\P^1$, this then implies that the point
$(x_1:x_2:x_3)\in\V(\F(\X))$, which contradicts the fact that
$\V(\F(\X))$ is a complete
intersection (hence finite). This shows that $b(\F(\X))X_1-a(\F(\X))X_2\neq0$.
\par\smallskip

Suppose that $X_1l_2(\F(\X))+X_2l_3(F(\X)$ and
$a(\F(\X))X_1-b(\F(\X))X_2$ have no common factors. Choose
$(x_1:x_2:x_3)\in \P^2$ such that $b(\F(\x))x_1-a(\F(\x))x_2=0,$ with $(x_1:x_2:x_3)$
neither in $\V(\F(\X))$ nor in
$\V(X_1l_2(\F(\X))+X_2l_3(F(\X)))$. By hypothesis, we still have
an open set in $\V(a(\F(\X))X_2-b(\F(\X))X_1)$ to make such
choices. From the first condition, we get
$(x_1:x_2)=(a(\F(\x)):b(\F(\x)))$. From the second constraint we
deduce that $a(\F(\x))l_2(\F(\x))+b(\F(\x))l_3(\x)\neq0.$ So, by
using \eqref{paramm}, we have that
\[(x_1:x_2:x_3)=(u_1(\F(\x)):u_2(\F(\x)):u_3(\F(\x)))\]
and hence the point lies in the image of the
parameterization. This can be done in an open set of this curve,
and so it implies that $b(\F(\X))X_1-a(\F(\X))X_2$ defines
$\C=V(E(\X))$. Algebraically we have that  ---up to a nonzero
constant in $\K$--- there exists $\nu\in\Z_{>0}$ such that
\begin{equation}\label{dee}
b(\F(\X))X_1-a(\F(\X))X_2=E(\X)^\nu.
\end{equation}
The polynomial on the left hand side has degree $2d_0+1$. By
inspecting \eqref{paramm}, and using the fact that
$\gcd(a(\T),b(\T))=1$, we conclude that the degree of $\C$ (which is the degree of any proper parameterization of it) is
equal to $$2d_0+1-\deg(\gcd(\u(\T)))=d_0+i,$$ with $0\leq i\leq d+1$. Computing
degrees in \eqref{dee} we get
$$2d_0+1=\nu(d_0+i).$$
This diophantine equation in $(\nu,i)$ has only two solutions:
$\nu=1$ and $i=d_0+1$, i.e.\  there are no common factors, or
$\nu=3,\,i=0$, which can only be possible if $d_0=1$.

\end{proof}

\begin{remark}
A quick glance at (\ref{paramm}) may let the reader think that all
curves parameterizable by conics have odd degree, but this is not
always the case as $\deg(\gcd(\u(\T)))$ may be strictly positive. Also it is
not true that all the curves parameterized by (\ref{paramm}) pass
through the point $(0:0:1)$ as the following cautionary example
shows.
\end{remark}

\begin{example}\label{ejj}
Set $F_1(\X):=X_1X_2-X_1X_3,\,F_2(\X):=X_1X_2-X_2X_3.$ We then
have
$l_1(\T)=T_1-T_2,\,l_2(\T)=T_2,\,l_3(\T)=-T_1,\,l_4(\T)=l_5(\T)=0.$
Set also $a(\T):=T_1^2,\,b(\T):=T_2^2$. We get
$$\begin{array}{ccl}
X_1l_2(\F(\X))+X_2l_3(\F(\X))&=&X_1X_2(X_1-X_2),\\
b(\F(\X))X_1-a(\F(\X))X_2&=&X_1X_2(X_1-X_2)(X_3^2-X_1X_2),
  \end{array}
$$
and it is easy to see that the implicit equation of the curve
defined by this data is given by $X_3^2-X_1X_2,$ which is a smooth conic. Note that
$(0:0:1)$ is not a point of the curve.
\end{example}
\smallskip
Next we will show that the case presented in Example \ref{ejj} is
somehow unusual in the sense that if $d_0>2$, then any curve being
parameterized by \eqref{paramm} actually passes through the point
$(0:0:1)$ and moreover, if there is a common factor among the
three polynomials defining the parameterization, then it has
degree at most $2$. In order to show that,  we present first a
 ``canonical'' form of the sequence $\{F_(\X),\,F_2(\X)\}$
which will depend on the geometry of $\V(\F(\X))$.

\begin{lemma}\label{reduccion}
Let $F_1(\X),\,F_2(\X)$ be a sequence of homogeneous forms
of degree $2$ in $\K[\X]$ without common factors and such that the conic defined by
${\mathcal F}(\T,\X)$ is nondegenerate in
$\P^2_{\overline{\K(\T)}}$. Assume also that
$(0:0:1)\in\V(\F(\X))$. Then, after a linear change of
coordinates in $\P^2,$ we can assume:
\begin{equation}\label{four}
\F(\X)=(X_1X_2-X_2X_3,X_1X_3-X_2X_3) \quad \mbox{if}\ |\V(\F(\X))|=4,
\end{equation}
 \begin{equation}\label{three}
\F(\X)=(X_1X_2,X_1X_3-X_2X_3) \quad \mbox{if}\ |\V(\F(\X))|=3,
\end{equation}
\begin{equation}\label{twotwo}
 \F(\X)=(X_1^2,X_2X_3) \quad \mbox{if}\ |\V(\F(\X))|=2\end{equation}
 and each of the points in $\V(\F(\X))$ has multiplicity two,
\begin{equation}\label{twothree}
\F(\X)=(X_1^2-X_2X_3,X_1X_2) \quad \mbox{if}\ |\V(\F(\X))|=2\end{equation}
and one of the points in $\V(\F(\X))$ has multiplicity three,
\begin{equation}\label{one} \F(\X)=(X_1^2,\,X_2^2-X_1X_3) \quad \mbox{if}\ |\V(\F(\X))|=1.\end{equation} 
\end{lemma}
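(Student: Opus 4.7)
The plan is to use B\'ezout's theorem to describe $V(\F(\X))$ as a length-$4$ zero-dimensional subscheme of $\P^2$, then enumerate it up to a projective linear automorphism of $\P^2$ by its set-theoretic support and local multiplicities, and finally verify that each canonical configuration is cut out by a pencil of the asserted form.

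An initial observation simplifies the task: since $F_1, F_2$ is a regular sequence of degree-$2$ forms, the Koszul complex gives $H^0(\I_{V(\F(\X))}(2)) = \K F_1 \oplus \K F_2$. Hence once $V(\F(\X))$ is placed in a standard position by a linear change of coordinates in $\P^2$, the pencil is determined, and $(F_1,F_2)$ can be renamed to any desired basis of it (a change of basis being a harmless reparametrization of $\T$). The problem therefore reduces to classifying the possible length-$4$ zero-dimensional schemes $V(\F(\X)) \subset \P^2$ containing $(0:0:1)$, up to projective linear equivalence.

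Next I would split into cases according to $|V(\F(\X))|$ and, when this cardinality is $2$, according to the partition $(2,2)$ or $(3,1)$ of the total length. For~\eqref{four}, I would first check that no three of the four points are collinear, for otherwise the line through them would divide both $F_i$; the points can then be mapped to the standard frame $(0:0:1),(0:1:0),(1:0:0),(1:1:1)$, and a linear algebra calculation shows that the space of conics through them is spanned by $X_1X_2-X_2X_3$ and $X_1X_3-X_2X_3$. For~\eqref{three}, I would send the reduced support to $(0:0:1),(0:1:0),(1:0:0)$ with the double point at $(0:0:1)$, and use the residual coordinate freedom fixing those three points to bring the tangent direction encoded by the length-$2$ structure at $(0:0:1)$ into a standard position; direct computation then yields the asserted pencil. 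Cases~\eqref{twotwo}, \eqref{twothree} and~\eqref{one} proceed analogously: fix the reduced support, standardize the higher-order jet data at each non-reduced point, and read off a basis of the pencil.

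The main difficulty is in the non-reduced cases, where one must check that the stabilizer in $\mathrm{PGL}_3(\K)$ of the reduced support acts transitively on the allowed local scheme structures of the required multiplicity, and then pick a convenient basis of the pencil. The cleanest approach I see uses the singular members of the pencil. For~\eqref{twotwo} the pencil contains a double line joining the two support points, which one normalizes to $F_1 = X_1^2$, leaving $F_2$ to be a line pair through $(0:0:1)$ and $(0:1:0)$ transverse to $X_1 = 0$, that is, $F_2 = X_2 X_3$. For~\eqref{twothree} the pencil has a unique singular member, a line pair with node at the mult-$3$ point; fixing this as $F_2 = X_1 X_2$ and requiring $F_1$ to have the prescribed local intersection multiplicities forces $F_1 = X_1^2 - X_2 X_3$ up to scalar. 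For~\eqref{one}, a local analysis at the unique base point shows the pencil necessarily contains a rank-$1$ member $F_1 = X_1^2$, after which $F_2$ is a smooth conic making contact of order $4$ with the line $X_1 = 0$ at $(0:0:1)$, and such conics form a single orbit under the stabilizer, represented by $X_2^2 - X_1 X_3$.
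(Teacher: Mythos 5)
Your proposal is correct in outline, but it takes a genuinely different route from the paper: the paper gives no argument at all for Lemma \ref{reduccion}, simply invoking the classical classification of pencils of conics (Semple--Kneebone, Chapter VII) together with a footnote asserting that the classical arguments are characteristic-free. You instead supply a self-contained proof: B\'ezout plus saturation of the complete intersection ideal identifies the pencil with $H^0(\mathcal I_{\V(\F(\X))}(2))$, so the whole classification is pushed onto the length-$4$ base scheme, which you then normalize under $\mathrm{PGL}_3(\K)$ case by case and read off a basis of the conics through it. This buys a transparent, coordinate-geometric derivation of the five normal forms (and your verifications in the individual cases do check out: no three base points collinear in case \eqref{four}, the double line through the two points lying in the pencil in case \eqref{twotwo}, the unique singular member being a line pair with vertex at the triple point in case \eqref{twothree}, and the forced rank-one member in case \eqref{one}), whereas the paper's citation buys brevity and defers the characteristic issues to the classical literature. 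Two small points you should make explicit. First, in case \eqref{one} a smooth conic cannot have ``contact of order $4$'' with the \emph{line} $X_1=0$ (B\'ezout caps it at $2$); what you mean, and what your argument needs, is that $F_2$ is tangent to $X_1=0$ at $(0:0:1)$, so that its intersection with the double line $X_1^2=0$ is the length-$4$ scheme concentrated there. Second, support-plus-multiplicities alone does not determine the base scheme: $\V(X_1^2,X_2^2)$ also consists of one point of multiplicity $4$, yet that pencil is degenerate and is excluded; so in the non-reduced cases you must (and implicitly do, by choosing a smooth member $F_2$) invoke the nondegeneracy hypothesis to know the base scheme is curvilinear, i.e.\ lies on a smooth conic. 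You also correctly note that one needs to allow a change of basis of the pencil (a linear substitution in $\T$) in addition to the linear change of coordinates in $\P^2$; the paper uses this silently, and it is harmless for all later applications of the lemma.
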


\begin{proof}
This classification is classic and well-known in Projective Geometry, see for instance \cite[Chapter VII]{SK52}.\footnote{Even though most of
the books in classic Projective Geometry deal with fields of characteristic zero, it is easy to see that the arguments leading to this classification
 are characteristic-free.} 
\end{proof}

\begin{proposition}\label{boundegree}
Assuming the same hypothesis and notations of Proposition
\ref{paramparam},  $\deg\big(\gcd(\u(\T))\big)\leq 3.$
\end{proposition}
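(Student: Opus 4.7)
The plan is to simplify $\gcd(\u(\T))$ using the factored form of \eqref{paramm}, and then reduce the remaining divisibility question to a divisibility against the discriminant of the conic $\mathcal F(\T,\X)$, which automatically has $\T$-degree $3$.

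First I would set $P(\T):=a(\T)\,l_2(\T)+b(\T)\,l_3(\T)$, so that $u_1=-aP$ and $u_2=-bP$. Since $\gcd(a,b)=1$, this immediately gives $\gcd(u_1,u_2)=P$, and hence
\[
g:=\gcd(u_1,u_2,u_3)=\gcd(P,u_3).
\]
Note that $P\neq 0$: with $\gcd(a,b)=1$ and $\deg a=\deg b=d_0>1$, any identity $al_2=-bl_3$ would force $l_2$ and $l_3$ both to vanish, contradicting the nondegeneracy of $\mathcal F(\T,\X)$. So the problem reduces to bounding $\deg\gcd(P,u_3)$.

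Since $g$ divides $P$, we have the congruence $al_2\equiv -bl_3\pmod{g}$. Multiplying $u_3=abl_1+a^2l_4+b^2l_5$ by $l_2^2$ and repeatedly applying this congruence yields
\[
l_2^2\,u_3\equiv b^2\,\Delta(\T)\pmod{g},\qquad\text{where }\Delta(\T):=l_2^2 l_5-l_1 l_2 l_3+l_3^2 l_4,
\]
and the symmetric manipulation with $l_3^2$ gives $l_3^2\,u_3\equiv a^2\,\Delta(\T)\pmod{g}$. Since $g\mid u_3$, both congruences combine with $\gcd(a,b)=1$ to yield $g\mid \Delta$. A direct check shows $\Delta$ equals $-4$ times the determinant of the symmetric matrix of the conic $\mathcal F(\T,\X)$, so the nondegeneracy hypothesis forces $\Delta\neq 0$. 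Since $\Delta$ has $\T$-degree exactly $3$, we obtain $\deg g\leq 3$.

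I do not anticipate a serious obstacle: the one point requiring a little care is to recognize that the combination of the $l_i$ emerging after eliminating $al_2$ is precisely the discriminant of $\mathcal F(\T,\X)$, whose non-vanishing is supplied for free by the nondegeneracy hypothesis of Proposition \ref{paramparam}.
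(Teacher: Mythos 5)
Your argument is correct, and it takes a genuinely different route from the paper. The paper makes the same initial reduction $\gcd(\u(\T))=\gcd\bigl(a l_2+b l_3,\ ab\,l_1+a^2l_4+b^2l_5\bigr)$, but then invokes the canonical forms of Lemma \ref{reduccion} and checks the five cases one by one, obtaining in each case an explicit divisor ($T_1T_2(T_1-T_2)$, $T_1T_2$, $T_2$, $T_1$, \dots) of the gcd. You avoid the classification entirely: from $g\mid a l_2+b l_3$ you eliminate $a l_2$ (resp.\ $b l_3$) in $l_2^2u_3$ (resp.\ $l_3^2u_3$) to get $g\mid b^2\Delta$ and $g\mid a^2\Delta$ with $\Delta=l_2^2l_5-l_1l_2l_3+l_3^2l_4$, and $\gcd(a,b)=1$ then gives $g\mid\Delta$; the congruence manipulations check out, and $\Delta$, being a nonzero cubic form, yields the bound. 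Your proof is more intrinsic and uniform (and in fact identifies a single cubic form, the discriminant of the pencil, of which all the paper's case-by-case divisors are factors), while the paper's case analysis buys sharper, explicit information about \emph{which} linear forms can divide $\gcd(\u(\T))$ in each geometric configuration of $\V(\F(\X))$ --- information in the spirit of what is used later when identifying the extraneous linear factors in Theorem \ref{complete}, although only the degree bound of this proposition is formally needed there.

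One caveat: your justification of $\Delta\neq 0$ goes through ``$-4$ times the determinant of the symmetric matrix of the conic,'' which presupposes that $2$ is invertible, whereas $\K$ is only assumed algebraically closed (the paper is explicitly careful about positive characteristic, including characteristic $2$). The statement you need is still true in every characteristic, and can be seen directly: if $l_2=l_3=0$ then ${\mathcal F}(\T,\X)$ is a binary form in $X_1,X_2$ and hence splits into linear factors; and if $(l_2,l_3)\neq(0,0)$ and $\Delta=0$, then substituting $(X_1:X_2:X_3)=(l_3 s:-l_2 s:t)$ shows that ${\mathcal F}(\T,\X)$ vanishes identically on the line $l_2X_1+l_3X_2=0$, so this linear form divides ${\mathcal F}(\T,\X)$. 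Either way the conic is degenerate, contradicting the hypothesis; with this characteristic-free replacement your proof is complete.
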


\begin{proof}
Note that linear changes of coordinates in $\P^2$ amount to linear
combinations of the $u_i(\T)$'s with coefficients in $\K$ which
are invertible, i.e.\  one can use the canonical forms of the
polynomials $F_1(\X),\,F_2(\X)$ given by Lemma \ref{reduccion}
without changing $\gcd(\u(\T))$. Note also that, as $a(\T),b(\T)$
have no common factors, then
$$\gcd(\u(\T))=\gcd(l_2(\T)a(\T)+l_3(\T)b(\T),a(\T)b(\T)l_1(\T)+a(\T)^2l_4(\T)+b(\T)^2l_5(\T)).$$
In each of the cases described in Lemma \ref{reduccion} we explicit the
values of $l_i$ for $i=1,\dots ,5$ and bound the degree of the gcd.
\begin{itemize}
\item In  \eqref{four} we have $$l_4(\T)=l_5(\T)=0,\,
l_1(\T)=-T_2,\, l_2(\T)=T_1, \,l_3(\T)=T_2-T_1.$$ Hence,
$\gcd(\u(\T))=\gcd(a(\T)T_1+b(\T)(T_2-T_1),a(\T)b(\T)T_2)$, and from here we can conclude that $\gcd(\u(\T))$ divides
$T_1T_2(T_1-T_2)$.

\item In  \eqref{three} we have $$l_4(\T)=l_5(\T)=0,\,
l_1(\T)=-T_2,\,l_2(\T)=T_1,\,l_3(\T)=-T_1.$$ In this case,
$\gcd(\u(\T))=\gcd(a(\T)T_1-b(\T)T_1,a(\T)b(\T)T_2)$  divides
$T_1T_2$.

\item In \eqref{twotwo} we have $$l_1(\T)=l_2(\T)=l_5(\T)=0,
\,l_3(\T)=T_1,\,l_4(\T)=-T_2.$$ 
We get that $\gcd(\u(\T))=\gcd(b(\T)T_1,a(\T)^2T_2)$  divides $T_1T_2$.

\item In \eqref{twothree} we have $$l_2(\T)=l_5(\T)=0,\,
l_1(\T)=T_1,\, l_3(\T)=T_2,\, l_4(\T)=-T_2.$$ 
So, we deduce that
$\gcd(\u(\T))=\gcd(b(\T)T_2,a(\T)b(\T)T_1-a(\T)^2T_2)$  divides
$T_2$.

\item In  \eqref{one} we have $$l_1(\T)=l_3(\T)=0,\,
l_2(\T)=-T_1,\,l_4(\T)=-T_2,\,l_5(\T)=T_1,$$ and we get that
$\gcd(\u(\T))=\gcd(a(\T)T_1,b(\T)^2T_1-a(\T)^2T_2)$ divides $T_1$.
\end{itemize}
In all of the cases, we get $\deg(\gcd(\u(\T)))\leq 3,$ which proves the claim.
\end{proof}

Now we can prove a complete version of Proposition
\ref{paramparam}.
\begin{theorem}\label{complete}
Let $F_1(\X),\,F_2(\X)$ be a sequence of quadratic
 forms in $\K[\X]$ without common factors such that
$(0:0:1)\in\V(\F(\X))$ and ${\mathcal F}(\T,\X)$ defines a nondegenerate conic in $\P^2_{\overline{\K(\T)}}.$ For any $a(\T),\,b(\T)\in\K[\T]$
homogeneous of degree $d_0>2$ without common factors, either
$a(\F(\X))X_2-b(\F(\X))X_1$ is an irreducible polynomial or it has
a unique irreducible factor of degree larger than $1$. In both
cases, this irreducible factor defines a rational curve
$\C\subset\P^2$ parameterizable by $\F(\X)$ and passing through
$(0:0:1)$. All the linear extraneous factors define equations of lines passing through the points of $\V(\F)$, and the degree of this factor is less
than or equal to three.
\end{theorem}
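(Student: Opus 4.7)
The plan is to use the construction of Proposition \ref{paramparam} together with the degree bound of Proposition \ref{boundegree} to factor $P(\X) := a(\F(\X))X_2 - b(\F(\X))X_1$ explicitly as $E(\X)\cdot M(\X)$, with $M$ a product of at most three linear forms, each passing through $(0:0:1)$. Set $h(\T):=\gcd(\u(\T))$ and $k:=\deg h$. By Proposition \ref{boundegree}, $k\leq 3$, and the proper parameterization $\tilde\u:=\u/h$ yields $\deg\C=2d_0+1-k\geq 2d_0-2\geq 4$ since $d_0>2$. Arguing as in Proposition \ref{paramparam}, $P\neq 0$ (otherwise $a(\F)$ and $b(\F)$ would share a common factor in $\K[\X]$, contradicting the finiteness of $\V(\F)$), and $P$ vanishes on $\C$ because $T_1F_2(\tilde\u)=T_2F_1(\tilde\u)$. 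Hence the irreducible implicit equation $E$ of $\C$ divides $P$. Writing $P=E^{\nu}M$ with $\gcd(E,M)=1$, the inequality $\nu(2d_0+1-k)\leq 2d_0+1$ together with $2d_0+1-k\geq 4$ forces $\nu=1$, so $P=E\cdot M$ with $\deg M=k\leq 3$.

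Next I would show that $M$ is a product of pairwise non-proportional linear forms, each a line through $(0:0:1)$. Using $\gcd(a,b)=1$, one sees $h=\gcd(al_2+bl_3,u_3)$; let $\t_1,\dots,\t_k$ be the distinct roots of $h$ in $\P^1$ (the bounds in the proof of Proposition \ref{boundegree} show $h$ is squarefree in every canonical form of Lemma \ref{reduccion}). For each $\t_i=(\alpha_i:\beta_i)$, both $a(\t_i)l_2(\t_i)+b(\t_i)l_3(\t_i)=0$ and $u_3(\t_i)=0$, which are precisely the vanishings of the two coefficients of
\[
{\mathcal F}(\t_i,a(\t_i)s,b(\t_i)s,t)=s^{2}[l_1(\t_i)a(\t_i)b(\t_i)+l_4(\t_i)a(\t_i)^{2}+l_5(\t_i)b(\t_i)^{2}]+st[l_2(\t_i)a(\t_i)+l_3(\t_i)b(\t_i)].
\]
Hence the line $L_i(\X):=b(\t_i)X_1-a(\t_i)X_2$ is a component of the conic ${\mathcal F}(\t_i,\X)=0$, i.e.\ $\alpha_iF_2(\X)-\beta_iF_1(\X)=L_i(\X)\cdot L'_i(\X)$ for some linear $L'_i$. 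On $L_i=0$ we then have $(F_1:F_2)=(\alpha_i:\beta_i)$, and the equal-degree homogeneity of $a,b$ gives $(a(\F):b(\F))=(a(\t_i):b(\t_i))$, so $P|_{L_i=0}\equiv 0$. Therefore $L_i\mid P$, and since $\deg L_i=1<\deg E$ we conclude $L_i\mid M$. A direct inspection of each canonical form of Lemma \ref{reduccion} using the explicit $l_j$'s tabulated in the proof of Proposition \ref{boundegree} confirms that distinct $\t_i$'s produce pairwise non-proportional $L_i$'s; matching degrees yields $M=c\prod_i L_i$ for some nonzero $c\in\K$. Each $L_i(\X)$ has no $X_3$ term and hence passes through $(0:0:1)\in\V(\F)$.

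It remains to show $(0:0:1)\in\C$. Let $g(\T):=al_2+bl_3$, of degree $d_0+1$; since $h\mid g$, the quotient $g/h$ has degree $d_0+1-k\geq 1$ and admits a root $\t^{*}$ with $h(\t^{*})\neq 0$. Then $u_1(\t^{*})=u_2(\t^{*})=0$, and necessarily $u_3(\t^{*})\neq 0$, for otherwise $\t^{*}$ would be a common root of $g$ and $u_3$ and therefore of $h$. Thus $\tilde\u(\t^{*})=(0:0:1)\in\C$.

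The main obstacle is the geometric/algebraic step in the second paragraph: translating the simultaneous vanishing of $al_2+bl_3$ and $u_3$ at a root $\t_i$ of $h$ into the splitting of the moving conic ${\mathcal F}(\t_i,\X)$ along the line $L_i(\X)=0$ (so that at such parameters the intersection of the moving line with the moving conic collapses onto $(0:0:1)$ together with another base point of the pencil), and from this extracting the divisibility $L_i\mid P$. The pairwise non-proportionality of the $L_i$'s, though routine, requires going through each of the five canonical forms of Lemma \ref{reduccion} with the explicit coefficients $l_1,\dots,l_5$ already tabulated in the proof of Proposition \ref{boundegree}.
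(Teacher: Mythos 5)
Your treatment of the factorization $a(\F(\X))X_2-b(\F(\X))X_1=E(\X)\,M(\X)$ is essentially correct and takes a partly different route from the paper's: instead of the containment $\V\big(a(\F(\X))X_2-b(\F(\X))X_1\big)\subset\V(E(\X))\cup\V\big(X_1l_2(\F(\X))+X_2l_3(\F(\X))\big)$ followed by factoring $X_1l_2(\F(\X))+X_2l_3(\F(\X))$ in the canonical forms, you attach to each root $\t_i$ of $h:=\gcd(\u(\T))$ the line $L_i=b(\t_i)X_1-a(\t_i)X_2$, obtain $L_i\mid a(\F(\X))X_2-b(\F(\X))X_1$ from the splitting of the conic ${\mathcal F}(\t_i,\X)$ along $L_i$, and match degrees; this even yields the sharper description $M=c\prod_iL_i$. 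The degree count forcing $\nu=1$ is the same as the paper's, and the squarefreeness of $h$ and the pairwise non-proportionality of the $L_i$, which you defer to an inspection of Lemma \ref{reduccion}, do check out (for instance in case \eqref{four} the possible roots $(0:1),(1:0),(1:1)$ force $L_i$ proportional to $X_2$, $X_1-X_2$, $X_1$ respectively); writing that verification out would be preferable, but the paper's own proof defers an analogous case check.

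There is, however, a genuine flaw in your final step: the claim that $g/h$, with $g:=a\,l_2+b\,l_3$, admits a root $\t^*$ with $h(\t^*)\neq0$ is false in general. Take $\F(\X)=(X_1^2,X_2X_3)$ (case \eqref{twotwo}), $a(\T)=T_1(T_1^2+T_2^2)$, $b(\T)=T_2^3$, so $d_0=3$; then $g=b\,T_1=T_1T_2^3$, $u_3=-a^2T_2$, $h=T_1T_2$, and $g/h=T_2^2$, whose only root $(1:0)$ is a root of $h$; moreover $u_3(1,0)=0$, so your inference ``necessarily $u_3(\t^*)\neq0$'' breaks down precisely in such a situation. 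The conclusion still holds and the repair is short: pass to the reduced (proper) parameterization $\u(\T)/h(\T)$, whose coordinates are $-a\,(g/h)$, $-b\,(g/h)$ and $u_3/h$. Since $\deg(g/h)=d_0+1-k\geq d_0-2\geq1$, the form $g/h$ has some root $\t^*$; there the first two reduced coordinates vanish, and the third cannot vanish because the reduced coordinates have no common factor, so $(0:0:1)$ lies in the image of the reduced parameterization, i.e.\ on $\C$ (in the example above, $\t^*=(1:0)$ gives $u_3/h=-T_1(T_1^2+T_2^2)^2$, nonzero there). This corrected argument is in essence the paper's own: the factor $a\,l_2+b\,l_3$ has degree $d_0+1>3\geq\deg h$, so it cannot be completely cancelled when the gcd is removed.
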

\begin{proof}
As shown in Proposition \ref{paramparam}, the pair $(a(\T),b(\T))$
defines the parameterization \eqref{paramm} of a curve $\C$ parameterizable
by $\F(\X)$. As $d_0>2,$ we then have $d_0+1>3$ and on
the other hand if there is a nontrivial $\gcd(\u(\T))$ in
\eqref{paramm}, its degree -thanks to Proposition
\ref{boundegree}- cannot be larger than three. This shows that
the factor $a(\T)l_2(\T)+b(\T)l_3(\T)$ cannot be completely cancelled when removing the $\gcd$ in \eqref{paramm}, and hence
$(0:0:1)$ is in the image of the parameterization. So, $\C$ passes through this point.

If  $\gcd(\u(\T))=1$, as the parameterization is proper,
$a(\F(\X))X_2-b(\F(\X))X_1$ has the same degree as the curve $\C$. Hence, it is the irreducible polynomial defining
it. Suppose then that this is not the case. Then there exist
$H(\X)\in\K[X]$ homogeneous and coprime with $E(\X)$ such that
\begin{equation}\label{deee}
a(\F(\X))X_2-b(\F(\X))X_1=E(\X)^\mu\,H(\X),
\end{equation}
with $\mu\in\N,\,E(\X)$ being the irreducible polynomial defining
$\C$. Let us say that
$\deg(E(\X))=\varepsilon,\,\deg(H(\X))=\rho>0$. By computing
degrees in \eqref{deee}, we get
$$2d_0+1=\mu\,\varepsilon+\rho
$$
Thanks to Proposition \ref{boundegree}, we know that $2d_0-2\leq
\varepsilon\leq 2 d_0+1,$ so we have $\mu(2d_0-2)+\rho\leq
2d_0+1$. As $d_0>2,$ we can conclude from here that $\mu=1$.
Moreover, we get that $\rho\leq 3,$ i.e.\ the degree of the
extraneous factor $H(\X)$ is bounded. It remains to show that $H(\X)$ decomposes as a product of linear factors. The proof of Proposition
\ref{paramparam} actually shows that 
$$\V\big(a(\F(\X))X_2-b(\F(\X))X_1\big)\subset\V(E(\X))\cup\V(X_1l_2(\F(\X))+X_2l_3(\F(\X))),$$
and hence the factors of $H(\X)$ must be among the factors of $X_1l_2(\F(\X))+X_2l_3(\F(\X))$. One can show that in all the possible cases listed in
Lemma \ref{reduccion}, the polynomial $X_1l_2(\F(\X))+X_2l_3(\F(\X))$ factorizes as a product of linear forms. Moreover, these linear forms can always 
be chosen in the set $\{X_1,\,X_2,\,X_1-X_2\},$ which are always lines passing through the points of $\V(\F).$
\end{proof}

\bigskip
\subsubsection{Examples}\label{example}
Let $d_0\in\N$ and set $a(\T)=T_1^{d_0},\,b(\T):=T_2^{d_0}$. We will consider all the possible scenarios given by Lemma \ref{reduccion}.
\begin{itemize}
\item For $\F(\X)=(X_1X_2-X_2X_3,X_1X_3-X_2X_3)$,  \eqref{paramm} becomes
$$ \left\{\begin{array}{ccl}
u_1(\T)&=&-T_1^{d_0} (T_1^{1 + d_0} - T_1 T_2^{d_0} + T_2^{1 + d_0})\\
u_2(\T)&=&-T_2^{d_0} (T_1^{1 + d_0} - T_1 T_2^{d_0} + T_2^{1 + d_0})\\
u_3(\T)&=&-T_1^{d_0} T_2^{1 + d_0}.
\end{array}\right.
$$ 
Note that $\gcd(\u(\T))=1$, hence $\C$ has degree $2d_0+1$. Computing explicitly the implicit equation we get
$$E(\X)=X_2^{d_0+1} (X_1 - X_3)^{d_0} - X_1X_3^{d_0} (X_1 - X_2)^{d_0}.$$

\item Set now $\F(\X)=(X_1X_2,X_1X_3-X_2X_3)$. The family $\u(\T)$ of \eqref{paramm} is now
$$ \left\{\begin{array}{ccl}
u_1(\T)&=&-T_1^{1 + d_0} (T_1^{d_0} - T_2^{d_0})\\
u_2(\T)&=&-T_1 T_2^{d_0} (T_1^{d_0} - T_2^{d_0})\\
u_3(\T)&=&-T_1^{d_0} T_2^{1 + d_0}.
\end{array}\right.
$$ 
Note that $\gcd(\u(\T))=T_1$ in this case, and hence $\deg(\C)=2d_0.$ 
Indeed, an explicit computation shows that
$$a(\F(\X))X_2-b(\F(\X))X_1=X_1\big(X_1^{d_0-1}X_2^{d_0+1}-X_3^{d_0}(X_1-X_2)^{d_0}\big),$$
hence the implicit equation is defined by $X_1^{d_0-1}X_2^{d_0+1}-X_3^{d_0}(X_1-X_2)^{d_0}.$
Note that in this case $$\gcd(X_1l_2(\F(\X))+X_2l_3(F(\X)),
a(\F(\X))X_2-b(\F(\X))X_1)=X_1,$$
(cf.\ Proposition \ref{paramparam}).

\item Set now $\F(\X)=(X_1^2,X_2X_3)$.Then,
$$ \left\{\begin{array}{ccl}
u_1(\T)&=&-T_1^{1 + d_0} T_2^{d_0}\\
u_2(\T)&=&-T_1 T_2^{2 d_0}\\
u_3(\T)&=&-T_1^{2 d_0} T_2,
\end{array}\right.
$$ 
with $\gcd(\u(\T))=T_1T_2$. Hence, $\deg(\C)=2d_0-1$ and computing explicitly $a(\F(\X))X_2-b(\F(\X))X_1$ we get
that it is equal to $X_1X_2\,E(\X)$, with 
$$E(\X)=X_1^{2d_0-1} -  X_2^{d_0-1} X_3^{d_0}.$$

\item For $\F(\X)=(X_1^2-X_2X_3,X_1X_2)$, we have  
$$\left\{\begin{array}{ccl}
u_1(\T)&=&-T_1^{d_0} T_2^{1+d_0}\\
u_2(\T)&=&-T_2^{1+2 d_0}\\
u_3(\T)&=&-T_1^{d_0+1} T_2(T_1^{d_0-1}  - T_2^{d_0-1}),
\end{array}\right.
$$ 
with $\gcd(\u(\T))=T_2$. So, $\deg(\C)=2d_0$ and $a(\F(\X))X_2-b(\F(\X))X_1$ is equal to $X_2\,E(\X)$ with
$$E(\X)=  (X_1^2 - X_2 X_3)^{d_0} -X_1^{1+d_0} X_2^{d_0-1}.$$

\item Finally, consider $\F(\X)=(X_1^2,\,X_2^2-X_1X_3).$ By computing explicitly, we get 
$$\left\{\begin{array}{ccl}
u_1(\T)&=&T_1^{1+2d_0} \\
u_2(\T)&=&T_1^{1+ d_0}T_2^{d_0}\\
u_3(\T)&=&T_1T_2(T_2^{2d_0-1}-T_1^{2d_0-1}).
\end{array}\right.
$$ 
Here, we have $\gcd(\u(\T))=T_1$. Again we get $\deg(\C)=2d_0$ and
$$a(\F(\X))X_2-b(\F(\X))X_1=X_1\,E(\X)$$ with
$E(\X)=  X_1^{2d_0-1}X_2 - (X_2^2 - X_1 X_3)^{d_0}.$
\end{itemize}

\bigskip

\section{The geometry of curves parameterizable by conics}\label{geometry}
In this section, we will study geometric properties of plane curves parameterizable by conics. We will show that essentially they are the image of  
a curve parameterizable by lines via a quadratic transformation of the plane.

\subsection{Quadratic transformations in the plane}
\begin{definition}\label{qt}
A rational map $\Lambda:\P^2\dasharrow\P^2$ is called a \emph{quadratic transformation} if $\Lambda$ is birational and there exist 
$Q_1(\X),\,Q_2(\X),\,Q_3(\X)\in\K[\X]$ homogeneous of degree $2$ without common factors such that 
\begin{equation}\label{Lambda}\Lambda(x_1:x_2:x_3)=\big(Q_1(\x):Q_2(\x):Q_3(\x)\big).
\end{equation}
\end{definition}
One of the most well-known of these quadratic transformations is the following 
\begin{equation}\label{cremona}
\Lambda_{\bf0}(x_1:x_2:x_3)=(x_2x_3:x_1x_3:x_1x_2),
\end{equation} which is used for
desingularization of curves, see for instance \cite{walker}. Even though there are birational automorphisms of $\P^2$ defined by homogeneous forms of arbitrary degree, we will focus here
in those of degree $2,$ as they will be crucial when studying curves parameterizable by conics.

\begin{proposition}\label{quadratic}
Let $F_1(\X),\,F_2(\X)\in\K[\X]$ be a sequence of homogeneous forms of degree $2$ without common factors. If the conic defined by ${\mathcal F}(\T,\X)$ 
in $\P^2_{\overline{\K(\T)}}$ is nondegenerate, then there
exists $F_3(\X)\in\K[\x]$ homogeneous of degree $2$ such that 
\begin{equation}\label{Lambdaf}
\begin{array}{cccc}
\Lambda_F:&\P^2&\dasharrow&\P^2\\
&(x_1:x_2:x_3)&\mapsto&\big(F_1(\x):F_2(\x):F_3(\x)\big)    
  \end{array}
\end{equation}
is a quadratic transformation. Moreover, $\Lambda_F^{-1}$ is also a quadratic transformation.
\end{proposition}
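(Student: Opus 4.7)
My plan is to reduce via Lemma~\ref{reduccion} to the five canonical forms of $\F$, since $\Lambda_F$ being a birational quadratic transformation is invariant under linear changes of coordinates in $\P^2$. Since $F_1,F_2$ are coprime, Bezout's theorem guarantees that $\V(F_1,F_2)$ is a zero-dimensional subscheme of length $4$ in $\P^2$, and in each canonical case I would exhibit an explicit quadratic form $F_3$ such that $\V(F_1,F_2,F_3)$ is a proper subscheme of length exactly $3$.

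Given such an $F_3$, its linear independence from the pencil $\K F_1+\K F_2$ is automatic: any combination in the pencil would vanish on the entire length-$4$ scheme $\V(F_1,F_2)$, contradicting the length-$3$ condition. Thus $\Lambda_F$ is a well-defined rational map $\P^2\dashrightarrow\P^2$. Birationality then follows from a Bezout argument: for generic $(t_1:t_2:t_3)\in\P^2$, the fiber $\Lambda_F^{-1}(t_1:t_2:t_3)$ is contained in the scheme $\V(t_3F_1-t_1F_3,\,t_3F_2-t_2F_3)$, an intersection of two conics with no common factor (for generic $\t$), hence of length~$4$ by Bezout. Since the containment of ideals $(t_3F_1-t_1F_3,\,t_3F_2-t_2F_3)\subset(F_1,F_2,F_3)$ forces the length-$3$ common base scheme $\V(F_1,F_2,F_3)$ to sit inside this intersection, exactly one residual reduced point remains, and this is the unique fiber point. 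So $\Lambda_F$ has degree one as a rational map, i.e.\ is birational.

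Concretely, in case \eqref{four} the base scheme is the four distinct points $(1:0:0)$, $(0:1:0)$, $(0:0:1)$, $(1:1:1)$, and one takes $F_3=X_2X_3$; in case \eqref{three}, $F_3=X_1X_3$; in case \eqref{twotwo}, $F_3=X_1X_3$; in case \eqref{twothree}, $F_3=X_2^2$; in case \eqref{one}, $F_3=X_1X_2$. A short local computation in the Artinian ring at each base point verifies the length-$3$ condition in each instance. For the statement that $\Lambda_F^{-1}$ is also quadratic, one can invoke the classical fact that a plane Cremona transformation of degree two has a quadratic inverse (a consequence of the Noether equations on base-point multiplicities), or alternatively solve for $\x$ in terms of $\t=\Lambda_F(\x)$ case by case, obtaining an explicit triple of quadratic forms.

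The main technical obstacle is the scheme-theoretic bookkeeping in cases \eqref{twotwo}, \eqref{twothree}, and \eqref{one}, where $\V(F_1,F_2)$ has infinitely-near or multiple components. In those cases, choosing $F_3$ requires more than passing through three set-theoretic points: one must verify that $F_3$ vanishes to exactly the right order at each multiple base point, so that $\V(F_1,F_2,F_3)$ has length $3$ and neither more nor less.
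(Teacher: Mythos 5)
Your reduction to the canonical forms of Lemma \ref{reduccion} and your concrete choices of $F_3$ are fine (where they differ from the paper's choices, in cases \eqref{three}, \eqref{twotwo}, \eqref{twothree}, they still give birational maps), but the central inference of your birationality argument has a genuine gap. From the containment of the length-$3$ base scheme $\V(F_1,F_2,F_3)$ in the length-$4$ intersection $\V(t_3F_1-t_1F_3,\,t_3F_2-t_2F_3)$ you conclude that ``exactly one residual reduced point remains, and this is the unique fiber point.'' That does not follow: the extra unit of length can be absorbed at the base points, in which case the generic fiber is empty and the map is not even dominant. A concrete counterexample to the principle you are invoking is $F_1=X_1^2$, $F_2=X_2^2$, $F_3=X_1X_2$: here $F_1,F_2$ have no common factor, the base scheme is $\V\big((X_1,X_2)^2\big)$ of length exactly $3$, for generic $\t$ the two conics $X_1(t_3X_1-t_1X_2)$ and $X_2(t_3X_2-t_2X_1)$ share no component, yet their length-$4$ intersection is entirely supported at $(0:0:1)$ and the map $(x_1:x_2:x_3)\mapsto(x_1^2:x_2^2:x_1x_2)$ is a two-to-one cover of a conic, not a Cremona transformation. (This pair violates the nondegeneracy hypothesis on ${\mathcal F}(\T,\X)$, but your argument at that step never uses that hypothesis, so the gap is real.) To close it you must additionally prove dominance --- equivalently, that for generic $\t$ the local intersection multiplicity at each base point equals the local length of the base scheme --- and this is more than the ``length-$3$ condition'' you propose to check in the Artinian local rings.

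The paper sidesteps all of this by exhibiting, in each canonical case, an explicit quadratic inverse: for $|\V(\F(\X))|\geq 3$ the net is the standard Cremona net $\Lambda_{\bf 0}$ up to an automorphism of $\P^2$, and in the remaining three cases explicit formulas such as $\Lambda_F^{-1}(y_1:y_2:y_3)=(y_1y_3:y_3^2:y_1y_2)$ are written down. This proves birationality and the quadratic nature of the inverse simultaneously, and it is characteristic-free, whereas the classical degree argument you cite for the inverse (Noether-type relations for plane Cremona maps) is usually established in characteristic zero --- compare the paper's remark following the proposition. Your proposed ``alternative'' of solving for $\x$ in terms of $\t$ case by case is exactly what is needed; carrying it out would both supply the quadratic inverse and repair the birationality gap, at which point your Bezout bookkeeping becomes a pleasant but redundant consistency check.
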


\begin{remark}
In characteristic zero, it is well-known that a birational transformation given by polynomials of degree $n$ has an inverse also given by forms of the same degree,
see for instance \cite{alberich}. 
\end{remark}

\begin{proof}
 We will use the canonical forms given in Lemma \ref{reduccion} in order to make explicit the polynomial $F_3(\X)$ in each of the possible cases. 
\begin{enumerate}
 \item If $|\V(\F(\X)|\geq3,$ we can suppose w.l.o.g.\ that $$\{(1:0:0),\,(0:1:0),\,(0:0:1)\}\subset\V(\F(\X)),$$ and hence by using \eqref{four} or \eqref{three},
it is easy to see that if we set $F_3(\X):=X_2X_3$, $\Lambda_F$ is actually the classical
transformation $\Lambda_{\bf0}$ composed with an automorphism of $\P^2$. As $\Lambda_{\bf0}^{-1}=\Lambda_{\bf0},$ it is easy to see that $\Lambda_F^{-1}$ 
can be defined with  linear combinations of
$X_1X_2,\,X_1X_3,\,X_2X_3$, hence it is a quadratic transformation.

\item If $|\V(\F(\X))|=2$, each point with multiplicity two, then by using \eqref{twotwo} we can assume w.l.o.g.\ that 
$$\F(\X)= (X_1^2,\,X_2X_3).$$
We set $F_3(\X):=X_1X_2$ and get  
$$\Lambda_F^{-1}(y_1:y_2:y_3)=\big(y_1y_3: y_3^2: y_1y_2\big),
$$
hence $\Lambda_F$ is birational with quadratic inverse.

\item If $|\V(\F(\X))|=2$ and one of the points in this set has multiplicity three, then by \eqref{twothree} we can assume after a linear change of coordinates that
$$\F(\X)=
( X_1^2-\,X_2X_3, X_1X_2).$$
Setting $F_3(\X):=X_2 X_3$ we get that
$$\Lambda_F^{-1}(y_1:y_2:y_3)=\big(y_2(y_1+ y_3):y_2^2:y_3(y_1+ y_3)\big).$$
Hence, $\Lambda_F$ is birational and the inverse is quadratic, as claimed.

\item If $\{(0:0:1)\}=\V(\F(\X))$. We then use \eqref{one} and suppose w.l.o.g.\ that
$$\F(\X)=( X_1^2,\,X_2^2-X_1X_3).$$
Once more, by setting $F_3(\X):=X_1X_2$, we get
$$\Lambda_F^{-1}(y_1:y_2:y_3)=\big(y_1^2: y_1y_3:y_3^2-y_1y_2\big),
$$ so we conclude that $\Lambda_F$ is birational with quadratic inverse. This completes the proof.
\end{enumerate}
\end{proof}

\begin{lemma}\label{zulema}
For any curve $\C_0$ of degree $d^0>1$ parameterizable by lines, having $(0:0:1)\in\C_0$ with multiplicity $d^0-1$, and any quadratic transformation $\Lambda:\P^2\dasharrow\P^2$
 whose inverse is defined by a sequence of quadratic forms $(F_1(\X),\,F_2(\X),\,F_3(\X)),\ \overline{\Lambda(\C_0)}$ is a curve 
parameterizable by  $(F_1(\X),F_2(\X))$.
\end{lemma}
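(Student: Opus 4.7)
The plan is to build a proper parameterization of $\overline{\Lambda(\C_0)}$ by composing the given parameterization of $\C_0$ with $\Lambda$, and then to recognize $(F_1,F_2)$ as its birational inverse using the defining identity $\Lambda^{-1}\circ\Lambda=\mathrm{id}_{\P^2}$. Since $\C_0$ has degree $d^0>1$ and $(0:0:1)\in\C_0$ has multiplicity $d^0-1$, the results in Subsection \ref{monoid} produce a proper parameterization
\[
\phi^0(\t)=(t_1\,a(\t):t_2\,a(\t):b(\t))
\]
with $a(\T),b(\T)\in\K[\T]$ coprime of degrees $d^0-1$ and $d^0$ respectively, whose rational inverse on $\C_0$ is $(X_1,X_2)$.

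Set $\phi:=\Lambda\circ\phi^0:\P^1\dashrightarrow\P^2$ and define $\psi(\x):=(F_1(\x):F_2(\x))$ on $\C:=\overline{\Lambda(\C_0)}$. First I would observe that $\phi$ is dominant onto $\C$: the map $\phi^0$ is dominant onto $\C_0$ and $\Lambda$ does not contract $\C_0$, since the curves contracted by a quadratic Cremona transformation are lines while $\deg(\C_0)=d^0>1$. Hence $\C$ is an irreducible plane curve and $\phi$ parameterizes it.

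The main step is to verify condition \eqref{u1} for $\phi$ and the pair $(F_1,F_2)$. Because $\Lambda^{-1}$ is defined by $(F_1:F_2:F_3)$, we have $\Lambda^{-1}(\Lambda(\x))=\x$ as points of $\P^2$ on a dense open set, that is
\[
(F_1(\Lambda(\x)):F_2(\Lambda(\x)):F_3(\Lambda(\x)))=(x_1:x_2:x_3).
\]
Specializing $\x=\phi^0(\t)$ and using that $a(\t)\neq 0$ generically, the first two coordinates yield
\[
(F_1(\phi(\t)):F_2(\phi(\t)))=(t_1\,a(\t):t_2\,a(\t))=(t_1:t_2),
\]
which is precisely \eqref{u1} for the parameterization $\phi$ and the pair $(F_1,F_2)$. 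By the discussion following \eqref{u}, condition \eqref{u1} implies \eqref{u2}, so $(F_1,F_2)$ is the birational inverse of $\phi$ and $\C$ is parameterizable by $(F_1,F_2)$ in the sense of \eqref{inverse}.

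The only delicate points I foresee are checking that $F_1$ and $F_2$ share no common factor (so that $\psi$ is a well-defined rational map as in \eqref{inverse}) and that $\C_0$ is not contracted by $\Lambda$. Both can be handled from the explicit canonical forms produced in Proposition \ref{quadratic}: in each case the pair $(F_1,F_2)$ coming from $\Lambda^{-1}$ is visibly coprime, and the exceptional locus of $\Lambda$ is a union of lines, so a curve of degree $>1$ is automatically not contracted.
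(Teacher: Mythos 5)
Your core argument is the same as the paper's: the paper's (very terse) proof just observes that $\C_0$ is not a line, so its image is a curve, and that since $\C_0$ is parameterizable by $(X_1,X_2)$, composing its parameterization with $\Lambda$ and using $\Lambda^{-1}\circ\Lambda=\mathrm{id}$ shows the image is parameterizable by $(F_1,F_2)$. Your explicit verification of \eqref{u1} via $\phi=\Lambda\circ\phi^0$ is exactly this computation, written out, and it is correct.

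The one misstep is in your closing paragraph. Lemma \ref{zulema} concerns an \emph{arbitrary} quadratic transformation $\Lambda$, not only the maps $\Lambda_F$ constructed in Proposition \ref{quadratic} from a coprime pair $(F_1,F_2)$ with nondegenerate associated conic, so the canonical forms of that proposition cannot be invoked to settle either of your ``delicate points.'' In fact the coprimality claim is false in general: as the paper notes in the remark immediately following the lemma, for $\Lambda=\Lambda_{\bf0}$ the inverse is given by $(X_2X_3,\,X_1X_3,\,X_1X_2)$, so $(F_1,F_2)=(X_2X_3,\,X_1X_3)$ has the common factor $X_3$; the conclusion of the lemma must then be read accordingly (after cancelling the common factor one obtains an inverse of lower degree, and indeed $\overline{\Lambda_{\bf0}(\C_0)}$ is again parameterizable by lines). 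Fortunately your main computation never uses coprimality of $F_1,F_2$, so nothing essential breaks. Likewise, for the non-contraction issue your other argument --- the curves contracted by a quadratic Cremona transformation are lines, while $\C_0$ is irreducible of degree $d^0>1$ --- is the correct general justification (it is what the paper means by ``$\C_0$ is not a line implies $\dim(\C)=1$'') and needs no appeal to canonical forms.
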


\begin{proof}
Set $\C=\overline{\Lambda(\C_0)}.$  The fact that $\C_0$ is not a line implies that $\dim(\C)=1$. As $\C_0$ is parameterizable by $(X_1,X_2)$, then it is easy to verify then that $\C$ is parameterizable by $(F_1,F_2)$. 
\end{proof}

\begin{remark}
We are not claiming in Lemma \ref{zulema} that the first two coordinates of a quadratic transformation have a non trivial common factor. Also, it is not
necessarily true that  the image of a curve parameterizable by lines via a quadratic transformation cannot be a parameterizable by lines anymore. For instance, 
$\Lambda_{\bf0}$ has 
$\F(\X)=(X_2X_3,\,X_1X_3)$ which has $X_3$ as a common factor. Also, if $\C_0$ is any curve parameterizable by lines having its singularity at $(0:0:1)$, then it is easy to check
that $\overline{\Lambda_{\bf0}(\C_0)}$ is again a curve parameterizable by lines having its singularity at the same point.
\par Moreover, not necesarily the first two coordinates of a quadratic transformation define a polynomial ${\mathcal F}(\T,\X)$ whose set of zeroes in
$\P^2_{\overline{\K(\T)}}$ is a nondegenerate conic, for instance $\Lambda(x_1:x_2:x_3):=(x_1^2:x_1x_2:(x_1+x_2)x_3)$ is a quadratic transformation 
with inverse $\Lambda^{-1}(x_1:x_2:x_3)=(x_1(x_1+x_2):x_2(x_1+x_2):x_1x_3)),$ but the conic defined by $T_2X_1^2-T_1X_1X_2$ is degenerate  according to
Proposition \ref{prop22}. 
\end{remark}
\smallskip
Now we proceed to compare the degrees of $\C_0$ and its transform $C=\overline{\Lambda(C_0)}$.
We start with the following result, which will be of use in the sequel.
\begin{lemma}\label{auxxi}
Let $Q_1(\X),\,Q_2(\X),\,Q_3(\X)\in\K[\X]$ be a sequence of homogeneous quadratic forms such that $\Lambda$ defined 
in \eqref{Lambda} is a quadratic transformation,
and $\C\subset\P^2$ any curve of degree $d$. Let $\C_Q$ be a generic conic in the linear system defined by $Q_1(\X),\,Q_2(\X),\,Q_3(\X).$
Then, for  any point $p\in\C\cap\C_Q,$ we have $$m_p(\C\cap\C_Q)\leq d.$$
Moreover, the inequality is strict if $\C_Q$ and $\C$ do not have a common tangent at $p$.
\end{lemma}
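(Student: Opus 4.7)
First, I would set up the geometry: since $\Lambda$ is a quadratic Cremona transformation, the net $\Pi=\langle Q_1,Q_2,Q_3\rangle$ has base scheme of length $3$, so by Bertini a generic $\C_Q\in\Pi$ is smooth, meets $\C$ transversely away from the base scheme, and has multiplicity $1$ at each support point of the base scheme. This reduces the problem to analyzing the intersection at each base point of $\Pi$ lying on $\C$, since for a non-base intersection point $p$, transversality gives $m_p(\C\cap\C_Q)=1<d$ (using $d\geq 2$) and no common tangent, so both claims hold trivially.

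At a base point $p\in\C$, I would work in local affine coordinates at $p$, write $E=E_m+E_{m+1}+\cdots$ for the defining polynomial of $\C$ (so that $m=m_p(\C)\leq d-1$ by irreducibility of $\C$), and parameterize the smooth branch of $\C_Q$ at $p$ as $\gamma(t)=(\alpha t+\cdots,\beta t+\cdots)$, where $(\alpha:\beta)$ is the tangent direction. The intersection multiplicity is then $\mathrm{ord}_{t=0}E(\gamma(t))$, whose leading term is $E_m(\alpha,\beta)\,t^m$. If the tangent direction $(\alpha:\beta)$ of $\C_Q$ is not a zero of the tangent cone $E_m$---i.e., $\C_Q$ and $\C$ share no tangent at $p$---then $m_p(\C\cap\C_Q)=m\leq d-1<d$, confirming the strict inequality.

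In the remaining common-tangent case with shared tangent $\ell$, I would choose coordinates so that $\ell\colon x=0$ and parameterize $\C_Q$ as $(\phi(t),t)$ with $\phi(t)=\lambda t^2+\mu t^3+\cdots$ and $\lambda\neq 0$ (since $\C_Q$ is smooth and tangent to $\ell$). Expanding $E(\phi(t),t)=\sum a_{ij}\phi(t)^i t^j$ and tracking the lowest power of $t$ gives generic order $N=\min\{2i+j:a_{ij}\neq 0\}$; restricting the minimum to the summands with $i=0$ yields $N\leq\min\{j:a_{0j}\neq 0\}=\mathrm{ord}_{y=0}E(0,y)=m_p(\C\cap\ell)$, and B\'ezout applied to $\C$ and the line $\ell$ (which is not a component of the irreducible curve $\C$ of degree $\geq 2$) gives $m_p(\C\cap\ell)\leq d$, hence $m_p(\C\cap\C_Q)\leq d$. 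The main obstacle is justifying that the generic $\C_Q\in\Pi$ really attains this generic order $N$ rather than a higher special value; this requires verifying that the parameters $(\lambda,\mu,\ldots)$ of the local parameterization of $\C_Q$ at $p$ sweep out a sufficiently general family as $\C_Q$ varies in $\Pi$, which can be handled by a case analysis of the possible length-$3$ base scheme configurations at $p$.
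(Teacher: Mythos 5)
Your overall strategy coincides with the paper's: split according to whether $\C$ and the generic member $\C_Q$ share a tangent at $p$, get $m_p(\C\cap\C_Q)=m_p(\C)<d$ in the transversal case from smoothness of $\C_Q$ at $p$, and in the tangential case reduce to a line through $p$ and apply B\'ezout to $\C$ and that line (the paper does this by asserting $m_p(\C\cap\C_Q)=m_p(\C\cap L_Q)$ for the tangent line $L_Q$ of the generic member). The difference is that you carry out the tangential case by an explicit local parameterization, and there your argument has a genuine gap --- precisely the one you flag and defer. The coefficient of $t^N$ in $E(\phi(t),t)$, with $N=\min\{2i+j:a_{ij}\neq 0\}$, equals $\sum_{2i+j=N}a_{ij}\lambda^i$, which is nonzero for \emph{generic} $\lambda$; but $\lambda$ need not be generic as $\C_Q$ varies in the net. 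If the base scheme of the net at $p$ is curvilinear of length three along the common tangent (the configuration of \eqref{one}; concretely the net generated by $X_1^2,\ X_2^2-X_1X_3,\ X_1X_2$, which does define a quadratic transformation), then in affine coordinates at $p=(0:0:1)$ every member is $x_1=x_2^2+\mu x_2^3+\cdots$ with only $\mu$ varying: the $2$-jet is pinned, so the coefficient of $t^N$ can vanish for all members simultaneously and the generic order exceeds $N$.

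Moreover, in that pinned situation it is not just the identification of the order with $N$ that fails, but the whole chain $\mathrm{ord}_t\,E(\phi(t),t)\le m_p(\C\cap\ell)\le d$: if a branch of $\C$ at $p$ also osculates $x_1=x_2^2$, the generic member meets that branch with multiplicity $3$, whereas the branch meets the common tangent $x_1=0$ with multiplicity only $2$. So your deferred ``case analysis of the length-$3$ base scheme configurations'' is not a routine verification that the parameters $(\lambda,\mu,\ldots)$ sweep out a general family --- in this case they do not --- and the bound $\le d$ must come from a different estimate there (for instance, bounding the forced contact of $\C_Q$ with each branch of $\C$ through the colength of the base ideal at $p$, the next coefficient $\mu$ being free, or by a direct comparison with the tangent line as in the paper's proof). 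Until this configuration is treated, the tangential half of the lemma is not established by your argument.
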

\begin{proof}
Suppose w.l.o.g.\ that $p=(0:0:1).$ A generic linear combination of the $Q_i(\X)$'s must have a non-zero linear term with respect to $X_3$ otherwise those three
polynomials would depend only on $X_1$ and $X_2$ contradicting the fact that $\Lambda$ is a birational. This implies that $m_p(\C_Q)=1$. On the other hand,
we always have $m_p(\C)<d$. If $\C$ and $\C_Q$ intersect transversally at $p,$ then we have (cf.\ \cite[Proposition $3.6$]{HKT08})
$$m_p(\C\cap\C_Q)=m_p(\C)< d.
$$ 
In case they do not intersect transversally, as $\C_Q$ has a tangent line $L_Q$ having multiplicity one at $p$, then we have
$$m_p(\C\cap\C_Q)=m_p(\C\cap L_Q)\leq d,
$$
the last inequality is due to B\'ezout's Theorem applied to $\C$ and $L_Q$.
\end{proof}

\begin{proposition}\label{num}
With notations and assumptions as in Lemma \ref{zulema}, denoting with  $D^0$  the degree of $\overline{\Lambda(\C_0)}$, then we have
$d^0-1\leq D^0\leq 2d^0,
$ 
and the inequalities are sharp.
\end{proposition}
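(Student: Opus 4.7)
The idea is to parameterize $\C$ by composing the quadratic transformation $\Lambda$ with the proper parameterization of the monoid $\C_0$, and then to bound the common factor of the resulting triple of polynomials.

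First I would invoke Subsection \ref{monoid} to write $\C_0$ as the image of the proper parameterization $\phi_0(\T)=(T_1 a(\T),T_2 a(\T), b(\T))$ with $\deg a=d^0-1$ and $\deg b=d^0$. Let $(Q_1(\X),Q_2(\X),Q_3(\X))$ be quadratic forms without common factor defining $\Lambda$. Since both $\Lambda$ and $\phi_0$ are birational and $\C_0$ is not contained in the (finite) indeterminacy locus of $\Lambda$, the composition $\Lambda\circ\phi_0:\P^1\to \C$ is a proper parameterization of $\C$ whose coordinates $Q_i(\phi_0(\T))$ have $\T$-degree $2d^0$. Consequently
\[
D^0 \;=\; 2d^0 - \deg g,\qquad g(\T):=\gcd\bigl(Q_1(\phi_0),\,Q_2(\phi_0),\,Q_3(\phi_0)\bigr),
\]
so the upper bound $D^0\leq 2d^0$ is immediate, with equality whenever the base locus $B:=\V(Q_1,Q_2,Q_3)$ of $\Lambda$ is disjoint from $\C_0$.

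For the lower bound, I would argue that $\mathrm{ord}_{t_0}g$ coincides with $\mathrm{ord}_{t_0} Q_\alpha(\phi_0)$ for a generic linear combination $Q_\alpha=\alpha_1Q_1+\alpha_2Q_2+\alpha_3Q_3$, and that summing over all $t_0$ with $\phi_0(t_0)=p$ yields the intersection multiplicity $m_p(\C_0\cap\V(Q_\alpha))$. For generic $\alpha$, Lemma \ref{auxxi} (together with the fact that the tangent of $\V(Q_\alpha)$ at $p$ is then generic) shows this intersection multiplicity equals $m_p(\C_0)$. The base locus $B$ consists of at most three points (counted with infinitely near multiplicity, in view of the classification of Lemma \ref{reduccion}), and the only singular point of $\C_0$ is $(0:0:1)$ with multiplicity $d^0-1$, all other points of $\C_0$ being smooth. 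Hence
\[
\deg g \;=\;\sum_{p\in B\cap\C_0}m_p(\C_0)\;\leq\;(d^0-1)+1+1\;=\;d^0+1,
\]
yielding $D^0\geq d^0-1$. For sharpness, $D^0=2d^0$ is attained by any $\Lambda$ whose base locus avoids $\C_0$, while $D^0=d^0-1$ is attained by taking $\Lambda=\Lambda_{\bf 0}$ the classical Cremona \eqref{cremona} together with $\C_0$ a monoid having singularity at $(0:0:1)$ and passing through both $(1:0:0)$ and $(0:1:0)$, i.e.\ with $b(1,0)=b(0,1)=0$, which exists for every $d^0\geq 2$.

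The main obstacle I anticipate is handling the less generic configurations arising from Lemma \ref{reduccion} in which the base locus of $\Lambda$ contains infinitely near points; there one must carefully verify, using Lemma \ref{auxxi}, that the combined contribution from all the (possibly infinitely near) base points to $\deg g$ still does not exceed $d^0+1$ when $(0:0:1)$ is among them.
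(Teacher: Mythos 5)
Your strategy is essentially the paper's: both arguments reduce to the identity $D^0=2d^0-\sum_{p}m_p(\C_0\cap \C_Q)$, where $\C_Q$ is a generic conic of the net defining the transformation and the sum runs over its base points on $\C_0$ (you obtain it by composing $\Lambda$ with $\phi_0$ and extracting the gcd, the paper by transporting a generic line through $\Lambda$ and applying B\'ezout); the upper bound is then free, the lower bound amounts to bounding the correction term by $d^0+1$, and the sharpness examples coincide (base locus disjoint from $\C_0$ for the upper bound, the standard Cremona \eqref{cremona} applied to a monoid through the coordinate base points for the lower one). The genuine gap is in your lower bound: you assert that for generic $\alpha$ one has $m_p\big(\C_0\cap\V(Q_\alpha)\big)=m_p(\C_0)$ because ``the tangent of $\V(Q_\alpha)$ at $p$ is generic.'' That is precisely what can fail. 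The net of a quadratic transformation may impose a fixed tangent direction at a base point (an infinitely near base point, as in the canonical forms \eqref{twotwo}, \eqref{twothree}, \eqref{one} of Lemma \ref{reduccion}); if that fixed tangent is also tangent to $\C_0$ at $p$ --- in particular at the $(d^0-1)$-fold point --- then every member of the net, however generic, meets $\C_0$ at $p$ with multiplicity strictly greater than $m_p(\C_0)$, and your displayed identity $\deg g=\sum_p m_p(\C_0)$ breaks down. You flag this yourself as ``the main obstacle'' but do not close it, so as written the inequality $\deg g\leq d^0+1$ is not established.

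The paper closes exactly this point with a short dichotomy you could adopt: since the three quadrics are linearly independent, the base scheme of the net has length at most three. Hence either there are three distinct base points, in which case no tangential condition is imposed, the generic member can be chosen transversal to $\C_0$, and the contribution is at most $(d^0-1)+1+1$; or the net does impose a fixed tangent, in which case there are at most two proper base points, and at the tangential one Lemma \ref{auxxi} (B\'ezout against the fixed tangent line) bounds the contribution by $d^0$, giving at most $1+d^0$. Either way the sum is at most $d^0+1$, which is all your argument needs. For the sharpness of the lower bound you should also actually verify the drop is exactly $d^0+1$: for $\Lambda_{\bf 0}$ there are no infinitely near base points and the generic member of the net is not tangent to $\C_0$ at any of the three base points, so each base point contributes exactly $m_p(\C_0)$; the paper instead checks this by writing down the image parameterization explicitly and reading off its degree $d^0-1$.
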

\begin{proof}
As before, set $\C=\overline{\Lambda(\C_0)}.$ Its degree can be computed as
the cardinality of $\C\cap L$, with $L$ a generic line in $\P^2,$ which we will choose as intersecting $\C$ in the (dense) open set of $\P^2$ where
$\Lambda$ is bijective. As $\Lambda$ is birational, then we can compute this intersection number via $\Lambda^{-1}$.
Then, $\C$ gets converted into $\C_0$ and $L$ in a generic linear combination of the quadratic polynomials $F_1(\X),\,F_2(\X),\,F_3(\X)$. 
We use then B\'ezout's Theorem 
in order to count the number of intersections between $\C_0$ and the conic $\Lambda^{-1}(L)$ to get
\begin{equation}\label{numeromania}
2d^0=D^0+\sum_{p\in\V(F_1,F_2,F_3)}m_p(\C_0\cap\Lambda^{-1}(L)).
\end{equation}
As the data $(F_1(\X),\,F_2(\X),\,F_3(\X))$ defines a birational transformation, it is easy to see that $|\V(F_1,F_2,F_3)|\leq 3.$ Moreover,
the scheme of points defined by $\F(\X)$ in $\P^2$ must have degree less than or equal to three, otherwise one of these polynomials
would be a linear combination of the others contradicting the fact that $\Lambda$ is a quadratic transformation.
\par We have in addition that 
$\C_0$ is parameterizable by lines. This implies that there is one point of multiplicity $d^0-1$ and the remaining have multiplicity one. Hence, thanks
to Lemma \ref{auxxi}, we have
$$0\leq \sum_{p\in\V(F_1,F_2,F_3)}m_p(\C_0\cap\Lambda^{-1}(L))\leq \left\{\begin{array}{l}
1+1+(d^0-1)\\
1+d^0,\end{array}
\right.=d^0+1.
$$
The first case is when $\V(\F(\X))$ has three points, hence there cannot be fixed tangential conditions in the linear system and this implies that we
can choose the generic line in such a way that $\Lambda^{-1}(L)$ cuts transversally $C_0$; the second case is when the linear system defined by
$\F(\X)$ has a fixed tangential condition. But then, we have that $\V(\F(\X))$ cannot have more than two points, and by using Lemma \ref{auxxi} we are done.
From here plus \eqref{numeromania}, we get the bounds of the claim.

Now we will show that the bounds are sharp. For a generic quadratic transformation $\Lambda,$
we will have $\sum_{p\in\V(F_1,F_2,F_3)}m_p(\C_0\cap\Lambda^{-1}(L))=0$. Indeed, one only has to pick $(F_1,F_2,F_3)$ 
in such a way that $\V(\F(\X))\cap\C_0=\emptyset.$  So, the
inequality at the left is generically  an equality. In order to show that the other inequality can also become an equality, let $d_0>1$ and consider the following parameterization
$$\left\{\begin{array}{ccl}
u_1(\t)&=&t_1\,\alpha(\t),\,\\
u_2(\t)&=&t_2\,\alpha(\t),\,\\
u_3(\t)&=&t_1t_2\beta(\t)   
  \end{array}\right.
$$
with $\alpha(\T),\,\beta(\T)$ homogeneous of degrees $d_0-1$ and $d_0-2$ without common factors and also without common factors with neither $T_1$ nor $T_2$.
Then, the curve $\C_0$ defined as the image of this parameterization is parameterizable by lines of degree $d_0$ with $p=(0:0:1)$ having multiplicity
$d_0-1.$
Consider $\Lambda_{\bf0}$ defined in \eqref{cremona}. Then, an straightforward computation shows that a proper parameterization of $\Lambda_0(\C_0)$ is given by
$$\left\{\begin{array}{ccl}
v_1(\t)&=&t_2\,\beta(\t),\,\\
v_2(\t)&=&t_1\,\beta(\t),\,\\
v_3(\t)&=&\alpha(\t);   
  \end{array}\right.
$$
i.e.\  $\Lambda_0(\C_0)$ is a curve of degree $d_0-1$. Note that this curve is either a line or again parameterizable by lines.
\end{proof}

\medskip
We can now describe geometrically the plane curves parameterizable by conics via quadratic transformations of curves parameterizable by lines. Recall that
thanks to Proposition \ref{prop22}, if $T_1F_2(\X)-T_2F_1(\X)$ defines a degenerate conic in $\P^2_{\overline{\K(\T)}},$ then any curve parameterizable by
$\F(\X)$ is either a line or parameterizable by lines. Also, curves of degree  $2$ are parameterizable by lines.
\begin{theorem}\label{mtmt}
Let $F_1(\X),\,F_2(\X)$ be sequence of homogeneous forms of degree $2$ without common factors such that $(0:0:1)\in\V(\F(\X))$ and 
$T_1F_2(\X)-T_2F_1(\X)$ does not define
a degenerate conic in $\P^2_{\overline{\K(\T)}}$.  
Consider any quadratic transformation of the form $\Lambda_F$ defined in \eqref{Lambdaf}. A curve $\C$ such that $\deg(\C)\geq3$ is parameterizable by $\F(\X)$ 
if and only if there exist $\C_0$ parameterizable by lines having $(0:0:1)$ as its only singular point and $\overline{\Lambda_F(\C)}=\C_0$.
\end{theorem}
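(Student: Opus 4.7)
The plan is to verify both implications by translating through the quadratic transformation and appealing to the monoid-curve characterization of Subsection \ref{monoid}.

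For the direction $(\Leftarrow)$, I would apply Lemma \ref{zulema} with $\Lambda := \Lambda_F^{-1}$, which is a quadratic transformation by Proposition \ref{quadratic} and whose inverse is $\Lambda_F$, defined by the sequence $(F_1, F_2, F_3)$. The hypothesis $\overline{\Lambda_F(\C)} = \C_0$ yields $\C \subseteq \overline{\Lambda_F^{-1}(\C_0)}$, and the right-hand side is an irreducible curve (the strict transform of an irreducible curve under a birational map), so it must coincide with $\C$. Lemma \ref{zulema} then gives that $\C = \overline{\Lambda(\C_0)}$ is parameterizable by $\F(\X)$.

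For the direction $(\Rightarrow)$, I would build an explicit monoid parameterization of $\C_0 := \overline{\Lambda_F(\C)}$ by composing $\Lambda_F$ with a proper parameterization $\phi(\t) = \u(\t)$ of $\C$. Relation \eqref{u} together with $\gcd(t_1,t_2)=1$ yields a homogeneous polynomial $h(\t) \in \K[\T]$ with $F_i(\u(\t)) = t_i\, h(\t)$ for $i=1,2$, so
\[
(\Lambda_F \circ \phi)(\t) = \big(t_1 h(\t) : t_2 h(\t) : F_3(\u(\t))\big).
\]
Setting $g := \gcd(t_1 h,\, t_2 h,\, F_3(\u))$, a short divisibility check (using $\gcd(t_1,t_2)=1$) shows $g \mid h$, so with $a := h/g$ and $b := F_3(\u)/g$ the map becomes $(t_1 a : t_2 a : b)$ with $\gcd(t_1 a, t_2 a, b)=1$, and in particular $\gcd(a,b)=1$: this is precisely the monoid shape \eqref{monn}. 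The parameterization remains proper because $\Lambda_F$ restricted to $\C$ is birational---the base locus of $\Lambda_F$ is finite, so $\C$ meets the isomorphism locus of $\Lambda_F$ in a dense open. Applying the monoid-curve characterization of Subsection \ref{monoid} then yields that $\C_0$ is parameterizable by lines with $(0:0:1)$ of multiplicity $d^0 - 1$, where $d^0 := \deg \C_0$; moreover $\gcd(a,b) = 1$ forces $(0:0:1)$ to be the only singular point, since at a singular point of $b(X_1,X_2) - a(X_1,X_2) X_3$ one has $\partial_{X_3} = -a(X_1,X_2) = 0$ and also $b(X_1,X_2) = 0$, so $(X_1:X_2) \in \P^1$ would be a common zero of $a$ and $b$, forcing $(X_1, X_2) = (0, 0)$.

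The main obstacle is ruling out the degenerate possibility $d^0 = 1$, where $\C_0$ would be a line, and this is where the hypothesis $\deg(\C) \geq 3$ is decisive. If $\C_0$ were a line, then $\C \subseteq \Lambda_F^{-1}(\C_0)$ would be contained in the conic cut out by a suitable linear combination of $F_1, F_2, F_3$, which is impossible for an irreducible curve of degree $\geq 3$. Hence $d^0 \geq 2$, the construction above genuinely produces a monoid curve $\C_0$, and the proof concludes.
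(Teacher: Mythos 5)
Your proof is correct and follows the same overall strategy as the paper: for the ``if'' direction both you and the paper invoke Lemma \ref{zulema} with $\Lambda=\Lambda_F^{-1}$ (your extra remark that $\overline{\Lambda_F(\C)}=\C_0$ forces $\C=\overline{\Lambda_F^{-1}(\C_0)}$ by irreducibility is a welcome detail the paper leaves implicit). For the ``only if'' direction the mechanics differ: the paper rules out $\deg(\C_0)=1$ via the B\'ezout-type estimate of Proposition \ref{num} (which rests on Lemma \ref{auxxi}) and then simply asserts that $\C_0$ is ``easily verified'' to be parameterizable by $(X_1,X_2)$, whereas you make that verification explicit --- extracting $F_i(\u(\t))=t_i\,h(\t)$ from \eqref{u}, clearing the gcd to land exactly on the monoid shape \eqref{monn} with $\gcd(a,b)=1$, and excluding the line case by the elementary observation that $\C$ would otherwise lie on the conic $\V(\alpha F_1+\beta F_2+\gamma F_3)$ (a nonzero form, since $\Lambda_F$ is birational), impossible for an irreducible curve of degree at least $3$. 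Your route is more self-contained, never needing Proposition \ref{num}; the paper's route buys the sharper two-sided bound $d^0-1\le D^0\le 2d^0$ as a by-product. One caveat you share with the paper: both arguments actually prove that $\C_0$ is a monoid whose distinguished point of multiplicity $\deg(\C_0)-1$ is $(0:0:1)$ and that $\mbox{Sing}(\C_0)\subseteq\{(0:0:1)\}$ (your coprimality-plus-partials argument, the paper's ``maximal multiplicity'' remark); neither excludes $\deg(\C_0)=2$, in which case $(0:0:1)$ is a smooth point, so the clause ``only singular point'' is established only in this weaker sense --- a limitation of the paper's own proof, not a defect introduced by yours.
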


\begin{proof}
As $T_1F_2(\X)-T_2F_1(\X)$ defines a nondegenerate conic in $\P^2_{\overline{\K(\T)}}$, we can find a quadratic transformation $\Lambda_F$ as in Proposition \ref{quadratic}. 
Set $\C_0$ to be the Zariski closure of $\Lambda_F(\C)$ in $\P^2$. By Proposition \ref{num}, we have that $\deg(\C_0)\geq \frac{\deg(\C)}{2}>1$, hence $\C_0$ is
not a line. We can also verify easily that $\C_0$ is parameterizable by $(X_1,X_2)$, hence it 
is parameterizable by lines and having $(0:0:1)\in\C_0$ with maximal multiplicity. 
\par In order to prove the converse, if we start with $\C_0$ as in the hypothesis and 
define $\C$ to be the Zariski closure of $\Lambda_F(\C_0),$ we can easily verify that $\C$ is parameterizable by $\F(\X)$. 
\end{proof}
\smallskip

\bigskip

\subsection{On the singularities of curves parameterizable by conics}
There is an increasing interest in the analysis of singularities of rational curves by means of elements of small degree in the Rees Algebra of
the parameterization, see for instance \cite{CKPU11}. Theorem \ref{mtmt} above shows that curves parameterizable by conics are only ``one quadratic 
transformation away'' from curves parameterizable by lines, and in principle it may seem that the study of their singularities can be done straightforwardly, as for instance the
transformation $\Lambda_{\bf0}$ defined in \eqref{cremona} is the one used in the process of desingularization of curves. The main drawback here is that ---as Theorem \ref{mtmt} 
claims--- a curve parameterizable by conics is the image of a curve parameterizable by lines with singular point in $(0:0:1)$ via \emph{any} quadratic transformation, and 
$\Lambda_{\bf0}$ is known to ``behave
properly'' if the curve is in a general position with respect to the coordinate axes (cf.\ the notions of ``good'' and ``excellent'' 
positions in \cite{ful69}). So, even if we use $\Lambda_{\bf0}$ to transform $\C$ into a curve parameterizable by lines, we cannot expect to get a straightforward dictionary between the only singularity of the 
curve parameterizable by lines and those of $\C$.
The analysis of the singularities of these curves require a further study of properties of general quadratic transformations, which goes beyond the scope
of this article.  

One case which is easy to tackle is when $|\V(\F(\X))|=4$, We will show that  in this situation, all of the four points are multiple points of  
$\C$ and moreover, there are no infinitely near multiple points.
We start by analyzing the only singularity of a curve parameterizable by lines.

\begin{proposition}\label{geomonoid}
Let $\C$ be a curve parameterizable by lines having $(0:0:1)\in\C$ with multiplicity $\deg(C)-1$, and implicit equation given by the polynomial
$b(X_1,X_2)+X_3\,a(X_1,X_2)\in\K[\X]$, with $a(\T),\,b(\T)$ homogeneous elements of degrees $d-1$ and $d$ respectively. Write
$$a(\T)=\cc_0\prod_{j=1}^{\tau}(\dd_jT_2-\ee_jT_1)^{\nu_j},$$
with $\cc_0\in\K\setminus\{0\},\,(\dd_j:\ee_j)\neq(\dd_k:\ee_k)$ if $j\neq k$, and $\nu_j\in\N$ for $j=1,\ldots, s$. 
Then,
\begin{enumerate}
\item there are $\tau$ different branches of $\C$ passing through $(0:0:1)$; 
\item denote with $\gamma_j$  the  branch of $\C$ at $\phi((\dd_j:\ee_j))$, here $\phi(t_1:t_2)$  is the parameterization of $\C$ given by \eqref{monn}. 
The tangent
to $\gamma_j$ at $(t_1:t_2)=(\dd_j:\ee_j)$ is the line $\dd_jX_2-\ee_jX_1=0$. In particular, different branches have different tangents (i.e.\  there are no tacnodes);
\item the order of contact of $\C$ with the tangent line $\dd_jX_2-\ee_jX_1=0$ at $(0:0:1)$ is equal to $\nu_j+1$;
\item the multiplicity of $(0:0:1)$ in $\C$ is $d-1$, and there are no infinitely near multiple points of $\C$. 
\end{enumerate}
\end{proposition}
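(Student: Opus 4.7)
My plan is to exploit the parametrization $\phi(t_1:t_2)=(t_1a(\t):t_2a(\t):b(\t))$ from \eqref{monn}, which realizes $\P^1$ as the normalization of $\C$; under the normalization, branches of $\C$ at $(0:0:1)$ are in bijection with the fibre $\phi^{-1}((0:0:1))$. Since $\gcd(a,b)=1$, this fibre is $\V(a)\cap\P^1=\{(\dd_j:\ee_j):1\le j\le\tau\}$, which immediately gives~(1) and sets up the one-to-one correspondence branch~$\leftrightarrow$~factor of $a$.

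For~(2) and~(3) I would fix $j$ and work in the affine chart $X_3\ne 0$ with coordinates $x=X_1/X_3,\,y=X_2/X_3$, and in the $\P^1$-chart $T_1\ne 0$ with $t=T_2$ (the case $\ee_j\ne 0$ is handled symmetrically via $T_2\ne 0$). Take the local parameter $s=t-\ee_j/\dd_j$. The hypothesis gives $a(1,t)=\alpha(s)s^{\nu_j}$ with $\alpha(0)\ne 0$ and $b(1,t)=\beta(s)$ with $\beta(0)\ne 0$, so $h(s):=a(1,t)/b(1,t)$ has order exactly $\nu_j$. The parametrization becomes
\[
x(s)=h(s),\qquad y(s)=(\ee_j/\dd_j+s)\,h(s),
\]
and the key computation is
\[
\dd_j\,y(s)-\ee_j\,x(s)=\dd_j\,s\,h(s),
\]
of order $\nu_j+1$ in $s$. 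Conversely, any linear form $\alpha X_1+\beta X_2$ not proportional to $\dd_j X_2-\ee_j X_1$ pulls back to $(\alpha+\beta\ee_j/\dd_j)h(s)+\beta s\,h(s)$, of order exactly $\nu_j$. Hence $\gamma_j$ has multiplicity $\nu_j$, tangent $\dd_j X_2-\ee_j X_1=0$, and contact $\nu_j+1$ with that tangent. The projective points $(\dd_j:\ee_j)$ being pairwise distinct forces the tangent lines to be pairwise distinct as well, ruling out tacnodes.

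For~(4), the multiplicity of $(0:0:1)$ on $\C$ equals the sum of the branch multiplicities, $\sum_{j}\nu_j=\deg a=d-1$. To rule out infinitely near multiple points, I would blow up $(0:0:1)$ and inspect the strict transforms in the chart $y=xu$: the strict transform of $\gamma_j$ is parametrized by $(h(s),\ee_j/\dd_j+s)$, and in the local coordinate $v=u-\ee_j/\dd_j=s$ at the infinitely near point $(0,\ee_j/\dd_j)$ this becomes $(h(s),s)$ with $s$ a uniformizer, so $\tilde\gamma_j$ is smooth there. The distinct tangent directions make the $\tau$ infinitely near points pairwise distinct on the exceptional divisor, so no point above $(0:0:1)$ carries more than one branch and each carries a smooth branch. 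The only mild technicality is switching charts when $\dd_j=0$, which is handled symmetrically by the projective invariance of the argument in $(\dd_j:\ee_j)$.
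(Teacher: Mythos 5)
Your treatment of items (1)--(3) is essentially the paper's own route made explicit: the paper dismisses them as ``working out the parameterization \eqref{monn} in a neighborhood of the zeroes of $a(\T)$'', together with the identity $T_2u_1(\T)-T_1u_2(\T)=0$, and your local computation with $s$, $h(s)$ and the pullbacks of the linear forms (plus properness of \eqref{monn} to identify branches at $(0:0:1)$ with the fibre $\V(a)$) is exactly that computation; your reading of (3) as the contact order of the single branch $\gamma_j$ with its tangent is the intended one. Where you genuinely diverge is item (4): the paper does not blow up at all, but notes that the point of multiplicity $d-1$ already accounts for the whole quantity $(d-1)(d-2)/2$ allowed by the genus formula for a rational curve of degree $d$, so there is no room left for any further multiple point, proper or infinitely near. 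Your explicit blowup is more hands-on and yields finer local information (each branch becomes smooth after one blowup and the branches separate on the exceptional divisor), but as written it only controls infinitely near points lying over $(0:0:1)$, while statement (4) concerns all of $\C$: you still need to know that $\C$ has no other singular points, since a singularity elsewhere (say a tacnode) would carry infinitely near multiple points your argument never sees. This is immediate either from Lemma \ref{silverman} (here $\F(\X)=(X_1,X_2)$, so $\mbox{Sing}(\C)\subset\V(X_1,X_2)=\{(0:0:1)\}$) or from the paper's genus-formula count; add that one sentence and your proof is complete. In exchange, your blowup argument proves slightly more than the paper states (the precise structure of the first neighborhood over $(0:0:1)$), whereas the genus-formula argument is shorter but purely numerical.
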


\begin{proof}
The first three items follow straightforwardly from working out the parameterization \eqref{monn} in a neighborhood of the zeroes of $a(\T),$ plus the fact that for 
this proper parameterization we have $T_2u_1(\T)-T_1u_2(\T)=0$. 
\par In order to conclude, recall that $(0:0:1)$ is a point of multiplicity $d-1$. The genus formula shows that there cannot be no more singular 
points in $\C$.
\end{proof}

The following result  about curves and rational maps is well-known. We record it here for the convenience of the reader. Denote with $\mbox{Sing}(C)$ the set of singular points of $\C$ in $\P^2$.
\begin{lemma}\label{silverman}
If $\C$ is parameterizable by $(F_1(\X),F_2(\X)),$ then
 $\mbox{Sing}(\C)\subset \V(\F(\X)).$
\end{lemma}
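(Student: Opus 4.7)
I plan to prove the contrapositive: every $p \in \C$ with $(F_1(p),F_2(p))\neq (0,0)$ is a nonsingular point of $\C$. At such a $p$ the rational map $\psi$ restricts to an honest morphism with $\psi(p) = q := (F_1(p):F_2(p)) \in \P^1$, and I will exhibit an isomorphism of local rings $\mathcal{O}_{\C,p}\cong\mathcal{O}_{\P^1,q}$. Since the latter is a discrete valuation ring, this will force $\mathcal{O}_{\C,p}$ to be regular, i.e.\ $p$ to be a smooth point of $\C$.

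The preliminary step is to show that $\phi(q) = p$ and, more strongly, that $\phi^{-1}(p) = \{q\}$ set-theoretically. By surjectivity of the morphism $\phi$, there exists some $q_0 \in \phi^{-1}(p)$. At such a point the composition $\psi\circ\phi$ is a well-defined morphism on a neighborhood of $q_0$, since $\phi(q_0) = p$ lies in the domain of $\psi$. Because $\psi\circ\phi$ agrees with $\mathrm{id}_{\P^1}$ on a dense open subset, and two morphisms into a separated target coinciding on a dense open set must agree wherever both are defined, we obtain $\psi(p) = q_0$. Thus $q_0 = q$, giving simultaneously $\phi(q) = p$ and the uniqueness of the preimage.

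With $\phi(q) = p$ and $\psi(p) = q$ in place, the pullbacks $\phi^*\colon \mathcal{O}_{\C,p}\to \mathcal{O}_{\P^1,q}$ and $\psi^*\colon \mathcal{O}_{\P^1,q}\to\mathcal{O}_{\C,p}$ are well-defined local ring homomorphisms. Their compositions $(\psi\circ\phi)^*$ and $(\phi\circ\psi)^*$ each equal the identity by the same separatedness argument applied to $\psi\circ\phi=\mathrm{id}_{\P^1}$ and $\phi\circ\psi=\mathrm{id}_{\C}$. Hence $\phi^*$ and $\psi^*$ are mutually inverse, yielding $\mathcal{O}_{\C,p}\cong \mathcal{O}_{\P^1,q}$, a DVR, and $p$ is smooth on $\C$. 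The main technical input is the extension principle for rational maps into separated varieties; once it is invoked, the rest is a routine bookkeeping of local ring homomorphisms. A slightly more pedestrian alternative would be to work inside the common function field $\K(\C)\cong\K(T)$ and identify $\mathcal{O}_{\C,p}$ and $\mathcal{O}_{\P^1,q}$ directly as the same valuation subring.
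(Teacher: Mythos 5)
Your argument is correct, but it is genuinely different from the one in the paper. The paper argues directly on a singular point $p=\phi(t_{01}:t_{02})$ lying outside $\V(\F(\X))$ and splits into two cases using the explicit identity $\F(\u(\t))=\t$: if the gradient of $\phi$ vanishes at the preimage, differentiating that identity gives a contradiction; if it does not vanish, the paper asserts that $p$ must then have a second preimage, which contradicts the identity by injectivity. Your proof instead runs through the contrapositive at the level of local rings: you use surjectivity of $\phi$ plus the principle that morphisms into a separated variety agreeing on a dense open subset of a reduced source agree wherever both are defined, to upgrade the rational identities $\psi\circ\phi=\mathrm{id}_{\P^1}$ and $\phi\circ\psi=\mathrm{id}_{\C}$ to actual equalities near $q=\psi(p)$ and $p$, and conclude that $\phi^*$ and $\psi^*$ are mutually inverse, so $\mathcal{O}_{\C,p}\cong\mathcal{O}_{\P^1,q}$ is a DVR and $p$ is smooth. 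What your route buys is uniformity and rigor: it works verbatim in any characteristic and sidesteps the most delicate step of the paper's argument, namely the claim that a singular point whose unique preimage is an immersive point of $\phi$ must be traversed twice (a statement essentially equivalent, at unibranch points, to the lemma itself). What the paper's route buys is elementarity and concreteness: it stays entirely within the explicit parameterization data and a calculus-style computation, with no appeal to separatedness, local rings, or the extension principle for rational maps.
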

\begin{proof}
Let $\phi$ be as in \eqref{param} a proper parameterization of $\C$ having as its inverse $\psi=(F_1:F_2)$ whenever it is defined, as in \eqref{inverse}.
As $\phi$ is defined on the whole $\P^1$, from $\psi\circ\phi=id_{\P^1},$ we have
\begin{equation}\label{identidad}
\big(F_1(\underline{\phi}(t_1:t_2)):F_2(\underline{\phi}(t_1:t_2))\big)=(t_1:t_2)\ \ \mbox{for } \underline{\phi}(t_1:t_2)\notin\V(\F).
\end{equation}
If $p=\phi(t_{01}:t_{02})\in\C$ is a singular point, and suppose that $(F_1(p):F_2(p))=(t_{01}:t_{02}),$ We then have two possible scenarios:
\begin{itemize}
 \item If the gradient of $\phi$ at $(t_{01}:t_{02})$ is equal to zero, by differentiating both sides of \eqref{identidad} and specializing 
$(t_1:t_2)\mapsto(t_{01}:t_{02})$ we would get a contradiction.
\item If the gradient of $\phi$ is not zero at $(t_{01}:t_{02})$, then there must be another point $(t_{11}:t_{12})\in\P^1$ such that $\phi(t_{11}:t_{12})=p$
(i.e.\ the curve ``passes'' at least twice over $p$). But then, we will have
$$\big(F_1(\underline{\phi}(t_{11}:t_{12})):F_2(\underline{\phi}(t_{11}:t_{12}))\big)=(t_{01}:t_{02})\neq(t_{11}:t_{12}),
$$
a contradiction with \eqref{identidad}.
\end{itemize}
This shows that for such a singular point $p\in\C, \,\F(p)=(0,0)$ which proves the claim.
\end{proof}

\begin{theorem}\label{4points}
Let $F_1(\X),\,F_2(\X)$ be a sequence of forms of degree $2$ in $\K[\X]$ without common factors, such that the conic defined by $T_1F_2(\X)-T_2F_1(\X)$ is
nondegenerate in $\P^2_{\overline{\K(\T)}}$. If $\C$ is parameterizable by $\F(\X),$ not parameterizable by lines, and $\V(\F(\X))$ has four points, 
then $\V(\F(\X))=\mbox{Sing}(\C)$ and at each $p\in \V(\F(\X))$, $p\in C$ is locally isomorphic to the singular  point of a curve 
parameterizable by lines. Thus $p$ is not a tacnode and has no infinitely near singular points. Hence 

\begin{equation}\label{aguap}
\sum_{p\in \V(\F(\X))}m_p(\C)(m_p(\C)-1)=(d-1)(d-2).
\end{equation}
\end{theorem}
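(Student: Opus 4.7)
The plan is to reduce to the canonical form \eqref{four} from Lemma~\ref{reduccion} and then to exploit the monoid curve $\C_0$ produced by Theorem~\ref{mtmt}. After a linear change of coordinates I may assume $\F(\X)=(X_1X_2-X_2X_3,X_1X_3-X_2X_3)$, so that $\V(\F(\X))=\{P_1,P_2,P_3,P_4\}$ with $P_1=(1:0:0)$, $P_2=(0:1:0)$, $P_3=(0:0:1)$, $P_4=(1:1:1)$. Choosing $F_3(\X)=X_2X_3$ in Proposition~\ref{quadratic} yields a quadratic transformation $\Lambda_F$ whose base locus is exactly $\{P_1,P_2,P_3\}$, and for which $\Lambda_F(P_4)=(0:0:1)$, the singular point of the monoid $\C_0$ associated to $\C$.

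I would dispatch $P_4$ first. A direct computation of $\Lambda_F^{-1}$ using the formulas in the proof of Proposition~\ref{quadratic} shows that $(0:0:1)$ does not belong to the base locus of $\Lambda_F^{-1}$, so $\Lambda_F$ restricts to a local biregular isomorphism between neighborhoods of $P_4$ and $(0:0:1)\in\C_0$. Consequently the germ $(\C,P_4)$ is analytically isomorphic to the monoid germ $(\C_0,(0:0:1))$, and Proposition~\ref{geomonoid} then yields $m_{P_4}(\C)=\deg(\C_0)-1$, together with distinct smooth tangent branches and no infinitely near singular points.

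For the three Cremona base points $P_1,P_2,P_3$ I would use the proper parameterization of Proposition~\ref{paramparam}. Specializing the coefficients of the canonical form \eqref{four} ($l_1=-T_2$, $l_2=T_1$, $l_3=T_2-T_1$, $l_4=l_5=0$) gives
\begin{align*}
u_1(\T)&=-a(\T)\bigl(a(\T)T_1+b(\T)(T_2-T_1)\bigr),\\
u_2(\T)&=-b(\T)\bigl(a(\T)T_1+b(\T)(T_2-T_1)\bigr),\\
u_3(\T)&=-a(\T)b(\T)T_2.
\end{align*}
Solving the three systems $u_1=u_2=0\neq u_3$, $u_1=u_3=0\neq u_2$, $u_2=u_3=0\neq u_1$ in turn identifies $\phi^{-1}(P_3)$, $\phi^{-1}(P_2)$, $\phi^{-1}(P_1)$ with the zero sets of $a(\T)T_1+b(\T)(T_2-T_1)$, $a(\T)$, and $b(\T)$ respectively. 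Using $\gcd(a,b)=1$ and the genericity implied by the properness of $\u(\T)$, I would verify that these three polynomials are pairwise coprime with only simple roots, and that at each root a direct Taylor expansion of $\phi$ produces a smooth branch of $\C$ with a nonzero tangent vector depending injectively on the root. Hence each $P_i$ is an ordinary multiple point with $m_{P_i}(\C)$ equal to the degree of the corresponding polynomial, with pairwise distinct tangents (so no tacnode) and no infinitely near singular points.

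Combining Lemma~\ref{silverman} with the two preceding steps establishes $\V(\F(\X))=\mbox{Sing}(\C)$. Since $\C$ is rational, the genus formula
\[
\binom{d-1}{2}=\sum_{p}\binom{m_p(\C)}{2},
\]
where the sum runs over all singular points and their infinitely near singular points, then collapses to a sum over $p\in\V(\F(\X))$; multiplying by two gives \eqref{aguap}. The most delicate ingredient will be the analysis at $P_1,P_2,P_3$: one must convert the parametric data into a rigorous verification that every branch is smooth and that different branches carry distinct tangent directions, which amounts to tracking how the coprimality among $a(\T)$, $b(\T)$, and $a(\T)T_1+b(\T)(T_2-T_1)$ (forced by the properness of $\u(\T)$) guarantees that the Jacobian of $\phi$ has the correct rank at each preimage.
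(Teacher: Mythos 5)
Your reduction to the canonical form \eqref{four} and your treatment of $P_4=(1:1:1)$ coincide with the paper's argument: $(1:1:1)$ avoids the lines contracted by $\Lambda_F$, so $\Lambda_F$ is a local isomorphism there and Proposition \ref{geomonoid} transports the monoid singularity structure to $(\C,P_4)$. The gap is in your handling of the three base points $P_1,P_2,P_3$. You claim that properness of $\u(\T)$ forces $a(\T)$, $b(\T)$ and $a(\T)T_1+b(\T)(T_2-T_1)$ to have only simple roots, and that a Taylor/Jacobian computation shows every branch of $\C$ through these points is smooth, so each $P_i$ would be an \emph{ordinary} multiple point. This is false, and the paper's own example after Theorem \ref{4points} refutes it: with $\F(\X)=(X_1X_2-X_2X_3,X_1X_3-X_2X_3)$, $a(\T)=T_1^2$, $b(\T)=T_2^2$ one gets a proper parameterization of a quintic whose singularities at $(1:0:0)$ and $(0:1:0)$ are cusps; there $a$ and $b$ have double roots and the corresponding branches are not smooth. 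Properness follows from the mere existence of the inverse $(F_1(\X),F_2(\X))$ and imposes nothing on the roots of $a,b$ beyond $\gcd(a(\T),b(\T))=1$. Note also that the statement to be proved only asserts that $p\in\C$ is locally isomorphic to the singular point of a monoid, and by Proposition \ref{geomonoid} such a point may have branches of contact order $\nu_j+1$ with their tangents, i.e.\ non-smooth branches; so ordinariness is not only unprovable, it is strictly stronger than what is true.

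The paper closes exactly this point with a symmetry argument you are missing: since the four points of $\V(\F(\X))$ in the canonical form are in general position, a further linear change of coordinates lets each of $P_1,P_2,P_3$ in turn play the role of $(1:1:1)$ while preserving the setting of \eqref{four}; then the same local-isomorphism argument you applied to $P_4$ applies verbatim to each of them, and $\V(\F(\X))=\mbox{Sing}(\C)$ plus \eqref{aguap} follow from Lemma \ref{silverman} and the genus formula as you indicate. If you insist on a direct parametric analysis at the base points, it can be salvaged, but it must accommodate branches of type $(s^{\nu},s^{\nu+1}\cdot\mathrm{unit})$ coming from multiple roots of $a(\T)$ and $b(\T)$, prove that distinct branches have distinct tangents, and verify the absence of infinitely near multiple points for such non-smooth branches --- precisely the delicate issues your proposed verification assumes away.
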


\begin{proof}
After a linear change of coordinates, we may assume that we are in the conditions of \eqref{four} and hence 
$$\{(1:0:0),\,(0:1:0),\,(0:0:1),\,(1:1:1)\}=\V(\F(\X)).$$ 
As in the proof of Proposition \ref{quadratic}, by setting $F_3(\X):=X_2X_3,$ we get a quadratic transformation $\Lambda_F$ which is actually the 
composition of $\Lambda_{\bf0}$  with an automorphim of $\P^2$. It is easy to check that $p_0=(1:1:1)$ is not in the union of lines where $\Lambda_F$
is not invertible. Hence, in  neighborhood of this point, $\Lambda_F$ is actually an algebraic isomorphism. As $\overline{\Lambda_F(\C)}$ 
is not a line (due to the fact that $\deg(\C)>2,$ otherwise it would be parameterizable by lines), and is  parameterizable by 
$(X_1,X_2)$ with only singularity in $(0:0:1)=\Lambda_F(p_0)$, then properties (1) to (3) in Proposition \ref{geomonoid} apply to $p_0$ with respect to 
$\C,$ due to the fact that $\Lambda_F$ is a local isomorphism around $p_0$ and its image. For the same reason, the fact that there are no infinitely near 
multiple points of $\overline{\Lambda_F(\C))}$ above $\Lambda_F(p_0)$ (this is property (4) in Proposition
\ref{geomonoid}) implies that there cannot be infinitely near multiple points of $\C$ above $p_0$. 
\par
Making a linear change of coordinates, the role of $(1:1:1)$ can be played by the other three points of $\V(\F(\X))$, and this implies the claim for the other 
three points, i.e.\  we have shown $\V(\F(\X))\subset\mbox{Sing}(\C)$). The other inclusion follows by Lemma \ref{silverman}, hence we
have the equality. As there cannot be more singular points, and none of the elements in $\V(\F(\X))$ has infinitely near multiple points of $\C$ above it,
\eqref{aguap} follows due to the genus formula.
\end{proof}

\begin{remark}
 Note that the Theorem does not claim that the four singular points have the same multiplicity and character. Just that they ``look like'' (locally)
like a multiple point in a curve parameterizable by lines. This curve is not necessarily the same for all the points, as the following example shows.
\end{remark}

\begin{example}
Let $\C$ be the rational curve of degree $5$ 
defined by the polynomial $E(\X)=X_2^3(X_1-X_3)^2-X_1X_3^2(X_1-X_2)^2$ (this is the first bullet of Example \ref{example} with $d_0=2$). Its four singular points
are $(1:0:0),\,(0:1:0),\,(0:0:1)$ and $(1:1:1)$. By analyzing them explicitly, we get that
\begin{itemize}
 \item $(0:0:1)$ is an ordinary triple point;
\item $(0:1:0)$ and $(1:0:0)$ are cusps;
\item $(1:1:1)$ is an ordinary triple point.
\end{itemize}
We can straightforwardly verify equality \eqref{aguap} in this case:
$$12=(5-1)(5-2)=3\times2+2\times1+2\times1+2\times1.$$
\end{example}

We have thus completed our study of the singularities of  $\C$ in the case $|\V(\F(\X))|=4,$ which is somehow the generic case among curves parameterizable by conics.  Now we turn into the question of how the singularities look like in the remaining cases. 
We will see in Section \ref{s1} (Corollary \ref{maxmu}) that the value of $\mu$ in \eqref{sec} is always equal to $\lfloor \frac{d}2\rfloor$. This information is enough 
to show that if $|\V(\F(\X))|\leq 3,$ then $\C$ has always infinitely near singular points if $\deg(\C)>6$.
\begin{proposition}
If $\C$ is parameterizable by a sequence of forms $(F_1(\X),\,F_2(\X))$ of degree $2$ without common factors, with $d=\deg(\C)>6$, and $|\V(\F(\X))|\leq3,$ 
then $\C$ has infinitely near singular points.
\end{proposition}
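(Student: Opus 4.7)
The plan is to argue by contradiction. Suppose $\C$ has no infinitely near singular points. By Lemma~\ref{silverman} every singular point of $\C$ lies in $\V(\F(\X))$, so $\mbox{Sing}(\C)$ contains at most three points; because $\C$ is rational of degree $d$ and all its singularities are ordinary, the plane-curve genus formula reads
$$
\sum_{p\in\mbox{Sing}(\C)}\binom{m_p(\C)}{2}=\binom{d-1}{2}.
$$

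The essential ingredient is Corollary~\ref{maxmu}, which gives $\mu=\lfloor d/2\rfloor$. Let $\ell_1(\T,\X)$ and $\ell_2(\T,\X)$ be a $\mu$-basis of the parameterization, of bidegrees $(\mu,1)$ and $(d-\mu,1)$. From $\ell_i(\T,\u(\T))=0$ and a comparison of orders of vanishing of $\ell_i(\T,p)-\ell_i(\T,\u(\T))$ at each preimage, we conclude that for every $p\in\C$ the polynomial $\ell_i(\T,p)\in\K[\T]$ vanishes, with the correct multiplicities, on the whole fiber $\phi^{-1}(p)$. Since $\deg_{\T}(\ell_2)=d-\mu=\lceil d/2\rceil$, this yields $m_p(\C)\leq\lceil d/2\rceil$; moreover, whenever $m_p(\C)>\mu=\lfloor d/2\rfloor$, we must have $\ell_1(\T,p)\equiv 0$ as a polynomial in $\T$.

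The decisive refinement is that at most one point $p\in\P^2$ can satisfy $\ell_1(\T,p)\equiv 0$. Writing $\ell_1(\T,\X)=\sum_{i=1}^{3}a_i(\T)X_i$ with each $a_i\in\K[\T]$ homogeneous of degree $\mu$, if two distinct points $p_1,p_2$ both satisfied $\ell_1(\T,p_j)\equiv 0$, then the vector of coefficients of every monomial $T_1^kT_2^{\mu-k}$ in $(a_1,a_2,a_3)$ would be orthogonal to both $p_1$ and $p_2$ in $\K^3$, hence a scalar multiple of $p_1\times p_2$. This would force the factorization $\ell_1(\T,\X)=L(\X)\,\phi(\T)$, where $L(\X)$ is the equation of the line through $p_1$ and $p_2$. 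Substituting $\X=\u(\T)$ and using that $\phi(\T)\not\equiv 0$ would give $L(\u(\T))\equiv 0$, i.e.\ $\C\subset\V(L)$, contradicting $\deg(\C)>1$.

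Consequently, among the at most three singular points, at most one attains multiplicity $\lceil d/2\rceil$, and the others satisfy $m_p(\C)\leq\lfloor d/2\rfloor$. Plugging this into the genus identity,
$$
\binom{d-1}{2}\leq\binom{\lceil d/2\rceil}{2}+2\binom{\lfloor d/2\rfloor}{2};
$$
a short case split on the parity of $d$ shows the right-hand side is strictly smaller than the left for $d>6$, producing the required contradiction. The main obstacle is the uniqueness step for the point where $\ell_1(\T,\cdot)$ vanishes identically; once it is established, the rest is just elementary binomial arithmetic.
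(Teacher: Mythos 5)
Your argument is correct, and its overall skeleton is the same as the paper's: Lemma \ref{silverman} confines $\mathrm{Sing}(\C)$ to the at most three points of $\V(\F(\X))$, Corollary \ref{maxmu} gives $\mu=\lfloor d/2\rfloor$, and the genus formula produces an arithmetic contradiction for $d>6$. The genuine difference is how the multiplicity bounds are obtained: the paper simply invokes \cite[Theorem 1]{CWL08} for the fact that at most one singular point can have multiplicity exceeding $\mu$, and that such a point has multiplicity at most $d-\mu$, whereas you re-derive this from the $\mu$-basis itself, bounding $m_p(\C)$ by the $\T$-degree of a nonvanishing specialization $\ell_i(\T,p)$ and proving, via the factorization $\ell_1(\T,\X)=\phi(\T)L(\X)$ forcing $\C\subset\V(L)$, that $\ell_1(\T,\cdot)$ can vanish identically at no more than one point. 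This buys a self-contained proof in place of the citation, at the cost of two details you should spell out. First, for the unconditional bound $m_p(\C)\le d-\mu$ you need that $\ell_1(\T,p)$ and $\ell_2(\T,p)$ cannot both be identically zero; this follows by evaluating the Koszul syzygies at $p$, which would give $u_i(\T)p_j-u_j(\T)p_i\equiv0$ and make $\u(\T)$ projectively constant. Second, the step asserting that $\ell_i(\T,p)$ vanishes on $\phi^{-1}(p)$ with total multiplicity at least $m_p(\C)$ is exactly the content of the cited theorem of Chen--Wang--Liu; your sketch is sound because a proper parameterization realizes $\P^1$ as the normalization of $\C$, so the branch orders at the points of $\phi^{-1}(p)$ sum to $m_p(\C)$, and at each such point $\ell_i(\T,p)$ vanishes at least to the branch order (eliminate the coefficient of the third coordinate using $\ell_i(\T,\u(\T))=0$, as you indicate). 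It is also worth a sentence that $d>6$ together with Proposition \ref{d'} rules out linear syzygies, so Corollary \ref{maxmu} indeed applies. With these points made explicit, your binomial inequality fails for all $d\ge7$ (both parities), matching the paper's computation $-\frac{d^2-8d+9}{4}<0$.
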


\begin{proof}
If there are no infinitely near multiple points, due to the genus formula, we will have
$$(d-1)(d-2)=\sum_{p\in\C}m_p(\C)\big(m_p(\C)-1\big).
$$
In \cite[Theorem 1]{CWL08}, it is shown that there can only be one multiple point of multiplicity larger than $\mu$. Moreover, if this is the case, then the
multiplicity of this point is actually $d-\mu$. Suppose then that $|\V(\F(\X))|\leq3.$ As $\mbox{Sing}(\C)\subset\V(\F(\X))$ (cf.\ Lemma \ref{silverman}), we then
conclude that there are at most $3$ singular points. One of them has its multiplicity bounded by $d-\mu\leq \frac{d+1}{2}$ and the other two have both
multiplicities bounded by $\frac{d}{2}$. Hence, we get
$$\begin{array}{ccl}
0&=&\sum_{p\in\C}m_p(\C)\big(m_p(\C)-1\big)-(d-1)(d-2)\\ \\
&\leq &(\frac{d+1}{2}\frac{d-1}{2}+2\ \frac{d}{2}\frac{d-2}{2})-(d-1)(d-2)\\ \\ 
&=&-\frac{d^2-8d+9}{4}.
\end{array}
$$
For $d\geq7$, the last expression is negative. This concludes the proof.
\end{proof}

\bigskip
\section{The Rees Algebra of a rational parameterization}\label{s1}
Now we turn to the problem of computing a set of minimal generators for the presentation of the Rees Algebra
associated to the ideal of a rational parameterization of a curve parameterizable by conics. This section may be considered  an extension
of the results given in \cite{CD10} (see also \cite{bus09,CHW08})
for curves parameterizable by lines.

Let $I$ be the ideal of $\K[T_1,T_2]$ generated by three
homogeneous polynomials
$u_1(T_1,T_2),\,u_2(T_1,T_2),\,u_3(T_1,T_2)$ of degree $d$ without
common  factors. Recall that
$\mbox{Rees}(I)=\K[\T][I\,Z]$ is the Rees Algebra associated to
$I.$ Let $\kK\subset R[\X]$ be the kernel of the graded morphism of
$\K[\T]$-algebras ${\mathfrak h}$ defined in \eqref{ris}.
It is
a bigraded ideal (with grading given by total degrees in $\T$ and
$\X$) characterized by
$$P(\T,\X)\in \kK_{i,j}\iff \bdeg(P)=(i,j)\ \mbox{and} \ P(\T,\u(\T))=0.$$
Let $\phi:\P^1\to\P^2$ be the map given by (\ref{param}), and  set as before
$\C:=\phi(\P^1)$. 
As we observed in Section \ref{uno}, $\phi$ admits a rational
inverse $\psi:\C\dasharrow\P^1$ if and only if there exists an irreducible nonzero element
in $\kK_{1,*}:=\oplus_{j=0}^\infty \kK_{1,j}.$ Moreover, if $F_1(\X),\,F_2(\X)$ are coprime elements in $\K[\X]$ and
$T_1F_2(\X)-T_2F_1(\X)\in\kK_{1,\nu},$
then
$\F(\X)$ defines the  inverse of $\phi$.

In the terminology of \cite{cox08,bus09}, $\kK$ is the
\emph{moving curve ideal} of the parameterization $\phi$. An
element in $\kK_{\ast,j}$ is called a \emph{moving curve} of degree $j$
that follows the parameterization. In this sense, moving lines that follow the parameterization are
the elements of $\kK_{\ast,1}$ and there is an obvious isomorphism
of $\K[\T]$-modules
\begin{equation}\label{corresp}
 \begin{array}{ccc}
\kK_{*,1}&\to&\mbox{Syz}(\I)\\
a(\T)X_1+b(\T)X_2+c(\T)X_3&\mapsto&\big(a(\T),b(\T),c(\T)\big).
  \end{array}
\end{equation}

Recall from Section \ref{uno} that the first module of syzygies of $I$ is a free
$\K[\T]$-module generated by two elements, one in degree $\mu$ for
a positive integer $\mu$ such that $0<\mu\leq\frac{d}{2}$, and the
other of degree $d-\mu$. Such a basis is called a $\mu$-basis. In the sequel, we
will denote with $p_{\mu ,1}(\T,\X),\,q_{d-\mu ,1}(\T,\X)\in
\kK_{*,1}$ a (chosen) set of two elements in $\mbox{Syz}(\I)$ which are a basis of this module.

Note that with this language, we can say that there exists an
irreducible element in $\kK_{1,1}$ if and only if $\C$ is
parameterizable by lines, and this is equivalent also to $\mu =1$. We will see
(for $d>3$) that if there exists an irreducible element in
$\kK_{1,2}$ (that is  $\C$ is parameterizable by conics and not by
lines, cf.\ Proposition \ref{d'}) then  $\mu =\lfloor\frac{d}{2}\rfloor$. Before that, we
present two results that will be useful in the sequel.

The first of them is the analogue of Proposition $2.6$ in
\cite{CD10}.

\begin{proposition}\label{multiple}
Suppose $T_1F_2(\X)-T_2F_1(\X)\in\kK$ is an irreducible
polynomial. Then, $P_{i,j}(\T,\X)\in\kK_{i,j}$ if and only if
$P_{i,j}(F_1(\X),F_2(\X),\X)$ is a multiple of $E(\X)$.
\end{proposition}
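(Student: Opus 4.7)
The plan is to prove the two implications separately, using the bihomogeneity of $P_{i,j}$ and the fact that the hypothesis together with \cite[Proposition 2.1]{CD10} says that $(F_1(\X),F_2(\X))$ realizes the inverse of the proper parameterization $\phi$; in particular $F_1,F_2$ are coprime and $T_1F_2(\u(\T))-T_2F_1(\u(\T))=0$.

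For the direct implication, suppose $P_{i,j}(\T,\u(\T))=0$. Pick a point $(x_1:x_2:x_3)\in\C$ in the (dense) open set where $\psi$ is defined and $\phi\circ\psi$ is the identity; let $(t_1:t_2)=\psi(\x)$. Then there exist nonzero scalars $\rho,\kappa\in\K$ (depending on $\x$) such that $F_l(\x)=\rho\,t_l$ for $l=1,2$ and $u_k(\t)=\kappa\,x_k$ for $k=1,2,3$. By bihomogeneity,
\[
P_{i,j}(F_1(\x),F_2(\x),\x)=\rho^{\,i}\,P_{i,j}(\t,\x)=\rho^{\,i}\kappa^{-j}\,P_{i,j}(\t,\u(\t))=0.
\]
Hence $P_{i,j}(F_1(\X),F_2(\X),\X)$ vanishes on an open subset of $\C$, so on all of $\C$. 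As $E(\X)$ is the irreducible polynomial defining $\C$, it divides $P_{i,j}(F_1(\X),F_2(\X),\X)$.

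For the converse, assume $P_{i,j}(F_1(\X),F_2(\X),\X)=E(\X)\,H(\X)$ for some $H(\X)\in\K[\X]$. Substitute $\X=\u(\T)$: the right hand side vanishes because $E(\u(\T))=0$. For the left hand side I use that, from $T_1F_2(\u(\T))=T_2F_1(\u(\T))$ in $\K[\T]$ and the coprimality of $T_1,T_2$, there exists $\sigma(\T)\in\K[\T]$ with
\[
F_1(\u(\T))=T_1\,\sigma(\T),\qquad F_2(\u(\T))=T_2\,\sigma(\T).
\]
Bihomogeneity now gives
\[
0=P_{i,j}(T_1\sigma(\T),T_2\sigma(\T),\u(\T))=\sigma(\T)^{\,i}\,P_{i,j}(\T,\u(\T)).
\]
Once we know $\sigma(\T)\neq 0$ in $\K[\T]$, the domain $\K[\T]$ forces $P_{i,j}(\T,\u(\T))=0$, i.e.\ $P_{i,j}\in\kK_{i,j}$.

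The one delicate point, which I would write out carefully, is the non-vanishing of $\sigma(\T)$. If $\sigma\equiv 0$ then both $F_1$ and $F_2$ would vanish identically on the image of $\phi$, hence on $\C$; since $E(\X)$ is the irreducible equation of $\C$, either $F_l=0$ (impossible, as $F_1,F_2$ are nonzero of the same degree without common factors) or $E\mid F_l$ for $l=1,2$, contradicting the coprimality of $F_1,F_2$ coming from the irreducibility of $T_1F_2(\X)-T_2F_1(\X)$. This rules out $\sigma\equiv 0$ and closes the argument.
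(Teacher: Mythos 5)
Your proof is correct, and it rests on the same two pillars as the paper's: the inverse relation between $\phi$ and $(F_1:F_2)$ guaranteed by the hypothesis, and bihomogeneity of $P_{i,j}$, together with the identification ``multiple of $E(\X)$ $\Leftrightarrow$ vanishes on $\C$''. The implementation differs in a way worth noting. The paper proves both implications at once by a single chain of equivalences carried out on the $\P^1$ side: it passes between ``for all'' and ``for almost all'' $(t_1:t_2)$ and uses $(F_1(\u(\t)):F_2(\u(\t)))=(t_1:t_2)$ for almost all $\t$ to trade the first two arguments of $P_{i,j}$, with Zariski density closing the loop. You instead split the two directions: the forward one you argue on the curve side, evaluating at generic points of $\C$ via $\phi\circ\psi=\mathrm{id}$, while for the converse you replace the generic-point reasoning by the exact polynomial identity $F_1(\u(\T))=T_1\sigma(\T)$, $F_2(\u(\T))=T_2\sigma(\T)$ (obtained from $T_1F_2(\u(\T))=T_2F_1(\u(\T))$ and coprimality of $T_1,T_2$ in $\K[\T]$), substitute, and cancel $\sigma(\T)^i$ in the domain $\K[\T]$. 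Your explicit verification that $\sigma\not\equiv 0$ (otherwise $E$ would divide both $F_1$ and $F_2$, contradicting the coprimality forced by irreducibility of $T_1F_2(\X)-T_2F_1(\X)$) is exactly the point where the paper instead invokes ``for almost all $\t$''; your version makes that nondegeneracy explicit and purely algebraic, at the cost of a slightly longer argument, while the paper's chain is more compact and symmetric. No gaps.
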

\begin{proof}
We only have to show that $P_{i,j}(F_1(\X),F_2(\X),\X)$ vanishes
on $\C$ if and only if $P_{i,j}(\T,\X)\in\kK_{i,j}$. Taking into
account that $\C=\{\u(\t) \mid (t_1:t_2)\in
\P^1\},$ and that $(t_1:t_2)=(F_1(\u(\t)):F_2(\u(\t))$  for almost all
$(t_1:t_2)\in \P^1$, then
\[
\begin{array}{l}
\\
P_{i,j}(F_1(\x),F_2(\x),\x)=0 \textrm{ for all } (x_1:x_2:x_3)\in
\C \Leftrightarrow \\ \\
\Leftrightarrow P_{i,j}(F_1(\u(\t)),F_2(\u(\t)),\u(\t))=0
\textrm{ for all } (t_1:t_2)\in \P^1 \Leftrightarrow \\ \\
\Leftrightarrow P_{i,j}(F_1(\u(\t)),F_2(\u(\t)),\u(\t))=0
\textrm{ for almost all } (t_1:t_2)\in \P^1 \Leftrightarrow \\ \\
\Leftrightarrow P_{i,j}(t_1,t_2,\u(\t))=0
\textrm{ for almost all } (t_1:t_2)\in \P^1\Leftrightarrow\\ \\
\Leftrightarrow P_{i,j}(t_1,t_2,\u(\t))=0
\textrm{ for all } (t_1:t_2)\in \P^1\Leftrightarrow
P_{i,j}(\T,\X)\in\kK_{i,j}.
\end{array}\]
\end{proof}

The following proposition is the analogue of Lemma $2.7$ in
\cite{CD10}.

 \begin{lemma}\label{lem1}
Suppose $F_1(\X),\,F_2(\X)$ are homogeneous of degree $j_0$. Let
$P(\T,\X)$ be a bihomogeneous polynomial of bidegree
$(i,j)\in\N^2$, with $i>0,\,j\geq j_0$. Then there exists
$Q(\T,\X)$ bihomogeneous of bidegree $(i-1,(i-1)j_0+j)$ such that
\begin{equation}\label{connect}
F_2(\X)^iP(\T,\X)-T_2^iP(F_1(\X),F_2(\X),\X)=(T_1F_2(\X)-T_2F_1(\X))Q(\T,\X).
\end{equation}
\end{lemma}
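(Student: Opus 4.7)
The approach is a direct expansion that reduces the problem to the elementary identity $A^k - B^k = (A-B)\sum_{l=0}^{k-1} A^l B^{k-1-l}$ applied with $A = T_1 F_2(\X)$ and $B = T_2 F_1(\X)$. Since $P(\T,\X)$ is bihomogeneous of bidegree $(i,j)$, I would first expand it in the $\T$-monomial basis as
$$P(\T,\X) = \sum_{k=0}^{i} P_k(\X)\, T_1^k T_2^{i-k},$$
with each $P_k(\X) \in \K[\X]$ homogeneous of degree $j$. Then a direct calculation gives
$$F_2(\X)^i P(\T,\X) - T_2^i P(F_1(\X),F_2(\X),\X) = \sum_{k=0}^i P_k(\X)\, F_2(\X)^{i-k} T_2^{i-k}\bigl[(T_1 F_2(\X))^k - (T_2 F_1(\X))^k\bigr].$$

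The $k=0$ summand is zero, and for each $k \geq 1$ the bracket is divisible by $T_1 F_2(\X) - T_2 F_1(\X)$ by the telescoping identity above. Factoring it out of every summand yields the explicit cofactor
$$Q(\T,\X) := \sum_{k=1}^i P_k(\X)\, F_2(\X)^{i-k} T_2^{i-k} \sum_{l=0}^{k-1} (T_1 F_2(\X))^l (T_2 F_1(\X))^{k-1-l},$$
which by construction satisfies \eqref{connect}.

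To finish I would verify the bidegree: each summand of $Q$ has $\T$-degree $(i-k)+(k-1) = i-1$ and $\X$-degree $j + (i-k)j_0 + (k-1)j_0 = j + (i-1)j_0$, matching the claim. The whole argument is bookkeeping; there is no essential obstacle. The hypothesis $i>0$ is used to ensure the outer sum is nonempty, and the bihomogeneity of $P$ is what guarantees that the coefficients $P_k(\X)$ share a common $\X$-degree $j$; the assumption $j \geq j_0$ plays no role in this factorization step, so the identity in fact holds without it.
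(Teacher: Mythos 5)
Your proof is correct and is essentially the paper's own argument made explicit: the paper rewrites the left-hand side as $P(T_1F_2(\X),T_2F_2(\X),\X)-P(T_2F_1(\X),T_2F_2(\X),\X)$ (using $\T$-homogeneity of degree $i$) and invokes the first-order Taylor/factor formula for the univariate polynomial $\theta\mapsto P(\theta,T_2F_2(\X),\X)$, which is exactly your termwise identity $A^k-B^k=(A-B)\sum_{l=0}^{k-1}A^lB^{k-1-l}$ with $A=T_1F_2(\X)$, $B=T_2F_1(\X)$, and your bidegree bookkeeping matches the claim. Your side remark that the hypothesis $j\geq j_0$ is never used is also accurate; indeed the paper later applies \eqref{connect} to elements of $\X$-degree $1$ while $j_0=2$.
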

\begin{proof}
$$
\begin{array}{l}
F_2(\X)^iP(\T,\X)-T_2^iP(F_1(\X),F_2(\X),\X)=\\
=P(T_1F_2(\X),T_2F_2(\X),\X)-P(T_2F_1(\X),T_2F_2(\X),\X).
\end{array}
$$
By applying  on the polynomial
$p(\theta):=P(\theta,T_2F_2(\X),\X)$  the first order Taylor
formula, the claim follows straightforwardly.
\end{proof}

\begin{proposition}
Assume that $\phi$ defined as in \eqref{param}, is a proper parameterization of a curve of degree $d$. Let $\nu$ be the
degree of a homogenous pair of polynomials in $\K[\X]$ defining the inverse of $\phi$. If $\mu>1$ then
$$\mu \, \nu +1\geq d.$$
\end{proposition}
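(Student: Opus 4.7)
The plan is to apply Lemma \ref{lem1} to the $\mu$-syzygy $p_{\mu,1}(\T,\X)\in\kK_{\mu,1}$ and then interpret the substitution $p_{\mu,1}(F_1(\X),F_2(\X),\X)$ via Proposition \ref{multiple}. Before that, I would check that Proposition \ref{multiple} actually applies, i.e.\ that $T_1F_2(\X)-T_2F_1(\X)$ is irreducible in $\K[\T,\X]$. This is straightforward: as a polynomial in $\K[\X][\T]$ it is linear in $\T$, with coefficients $F_1(\X)$ and $F_2(\X)$ which are coprime in $\K[\X]$, so it is primitive and linear, hence irreducible. I would also note upfront that if $\mu>1$ then necessarily $\nu>1$, since $\nu=1$ would say $\C$ is parameterizable by lines, forcing $\mu=1$ (by the well-known classification recalled before Proposition \ref{d'}).

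Next I would apply Lemma \ref{lem1} with $P(\T,\X)=p_{\mu,1}(\T,\X)$ of bidegree $(\mu,1)$ and $j_0=\nu$. This yields $Q(\T,\X)$ of bidegree $(\mu-1,(\mu-1)\nu+1)$ such that
\begin{equation*}
F_2(\X)^{\mu}\,p_{\mu,1}(\T,\X)-T_2^{\mu}\,p_{\mu,1}(F_1(\X),F_2(\X),\X)=\bigl(T_1F_2(\X)-T_2F_1(\X)\bigr)Q(\T,\X).
\end{equation*}
I then want to argue that $p_{\mu,1}(F_1(\X),F_2(\X),\X)\neq 0$. If it vanished, then $T_1F_2-T_2F_1$ would divide $F_2^{\mu}\,p_{\mu,1}(\T,\X)$ in $\K[\T,\X]$. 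Since $T_1F_2-T_2F_1$ is irreducible and has positive $\T$-degree, while $F_2(\X)$ has $\T$-degree zero, it cannot divide any power of $F_2$, so it would have to divide $p_{\mu,1}(\T,\X)$. But $p_{\mu,1}$ has $\X$-degree $1$ and $T_1F_2-T_2F_1$ has $\X$-degree $\nu\geq 2$, a contradiction.

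Finally, by Proposition \ref{multiple}, the nonzero polynomial $p_{\mu,1}(F_1(\X),F_2(\X),\X)\in\K[\X]$ is a multiple of the implicit equation $E(\X)$ of $\C$. Comparing $\X$-degrees, the left hand side has degree $\mu\nu+1$ (since $p_{\mu,1}$ is linear in $\X$ and each $F_i$ has degree $\nu$), whereas $E(\X)$ has degree $d$. Hence $\mu\nu+1\geq d$, as required.

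The only mildly delicate point is the nonvanishing in the third paragraph; everything else is a routine bidegree bookkeeping. The argument above handles it by exploiting irreducibility of $T_1F_2-T_2F_1$ together with the forced inequality $\nu\geq 2$ coming from $\mu>1$, so no serious obstacle remains.
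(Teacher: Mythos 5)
Your argument is correct and follows essentially the same route as the paper's proof: both rest on Proposition \ref{multiple} together with the identity \eqref{connect} of Lemma \ref{lem1} applied to $p_{\mu,1}(\T,\X)$, with the nonvanishing of $p_{\mu,1}(F_1(\X),F_2(\X),\X)$ secured by the fact that the irreducible form $T_1F_2(\X)-T_2F_1(\X)$ cannot divide a polynomial of $\X$-degree $1$ unless $\nu=1$, which is excluded by $\mu>1$. The only difference is presentational (you argue the nonvanishing directly, the paper argues by contradiction assuming $\mu\nu+1<d$), and your explicit checks of the irreducibility of $T_1F_2(\X)-T_2F_1(\X)$ and of $\nu\geq 2$ are welcome details left implicit in the paper.
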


\begin{proof}
Let $p_{\mu,1}(\T,\X)$ be a nonzero element in $\kK_{\mu,1}$. Due
to Proposition \ref{multiple}, we have that
$p_{\mu,1}(F_1(\X),F_2(\X),\X)$ is a multiple of $E(\X)$, which has degree $d$. As $\deg(p_{\mu,1}(F_1(\X),F_2(\X),\X))=\mu \, \nu +1$, it turns out that if
$\mu \, \nu +1<d,$ then  $$p_{\mu,1}(F_1(\X),F_2(\X),\X)=0.$$ By
(\ref{connect}), we then have $F_2(\X)p_{\mu,1}(\T,\X)\in\langle
T_1F_2(\X)-T_2F_1(\X)\rangle,$ which is a prime ideal and clearly
$F_2(\X)$ does not belong to it. So we conclude that
$p_{\mu,1}(\T,\X)$ is a multiple of $T_1F_2(\X)-T_2F_1(\X)$
which is impossible unless $\deg(F_1(\X))=\deg(F_2(\X))=1$ which
is equivalent to $\mu=1$.
\end{proof}

\begin{corollary}\label{maxmu}
If $\nu=2$ (i.e.\  $\phi$ parameterizable by conics) and there are
no linear syzygies, then $\mu=\lfloor\frac{d}{2}\rfloor$, the
maximum possible value.
\end{corollary}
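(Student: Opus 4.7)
The plan is to derive the corollary directly from the preceding proposition combined with the standing bound $\mu\leq d/2$ coming from the Hilbert--Burch resolution \eqref{sec}.

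First, I would note that the hypothesis ``there are no linear syzygies'' means that $\kK_{*,1}$ contains no element of $\T$-degree $1$, hence $\mu\geq 2$, so in particular $\mu>1$. This puts us in the regime where the previous proposition applies.

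Next, I would apply the previous proposition with $\nu=2$: it yields $2\mu+1\geq d$, i.e.\ $\mu\geq\tfrac{d-1}{2}$. Combined with the a priori upper bound $\mu\leq \tfrac{d}{2}$ recalled in Section~\ref{uno} (from the two generators of $\mathrm{Syz}(I)$ having $\T$-degrees $\mu$ and $d-\mu$), we obtain
\[
\frac{d-1}{2}\leq \mu \leq \frac{d}{2}.
\]

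Finally, I would conclude by distinguishing parities of $d$: if $d$ is even then the integer $\mu$ squeezed between $(d-1)/2$ and $d/2$ must equal $d/2=\lfloor d/2\rfloor$, while if $d$ is odd the only integer in the interval is $(d-1)/2=\lfloor d/2\rfloor$. Either way $\mu=\lfloor d/2\rfloor$, which is the asserted maximal possible value. There is essentially no obstacle here: the corollary is a one-line arithmetic consequence of the proposition plus the standard bound on $\mu$; the only thing to be careful about is invoking the hypothesis ``no linear syzygies'' to ensure $\mu>1$ so that the proposition is applicable.
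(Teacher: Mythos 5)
Your proposal is correct and is exactly the argument the paper intends: the corollary is stated without proof precisely because it follows immediately from the preceding proposition ($\mu>1$ and $\nu=2$ give $2\mu+1\geq d$) combined with the standing bound $\mu\leq \frac{d}{2}$ from the resolution \eqref{sec}, with the parity check you describe. Nothing is missing.
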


It was shown already in \cite{bus09} that for $\mu\geq2$ the
description of generators of $\kK$ is much more complicated than
in the case of curves parameterizable by lines, so there is little  hope that the elementary
methods applied in \cite{CD10} can be used in these cases. Next we will show that instead of looking at low degrees for the syzygies of $\phi$, if
we try low degrees for the inverse of $\phi$, that the approach of \cite{CD10} can be adapted, and indeed produces a minimal set of 
generators of rational plane
curves parameterizable by conics (i.e.\ , the degree of the inverse is equal to $2$). We start by recalling the following:

\begin{proposition}[Proposition 3.6 in \cite{BJ03}]\label{1.1} 
The sequence $p_{\mu,1}(\T,\X),\,q_{d-\mu, 1}(\T,\X)$ is 
regular in $\K[\T,\X]$ and
\[
 \kK= \bigcup_{n\geq 0} \langle p_{\mu,1}(\T,\X),\,q_{d-\mu,
1}(\T,\X)\rangle:\langle T_1,T_2 \rangle^n.\]
\end{proposition}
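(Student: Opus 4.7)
The plan is to prove the two assertions separately: the regularity of the pair, and the equality $\kK = \langle p_{\mu,1},q_{d-\mu,1}\rangle : \langle T_1,T_2\rangle^\infty$. Set $J:=\langle p_{\mu,1},q_{d-\mu,1}\rangle$ throughout.

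For the regularity claim, $\K[\T,\X]$ is Cohen--Macaulay, so it suffices to check that $\mathrm{ht}(J)=2$. The upper bound is automatic; for the lower bound I would exhibit two distinct height-$2$ primes containing $J$. The prime $\kK$ contains $J$ by definition of the $\mu$-basis, and has height $2$ because $\K[\T,\X]/\kK\cong\mathrm{Rees}(I)$ is a three-dimensional domain sitting inside the five-dimensional $\K[\T,\X]$. The ideal $\langle T_1,T_2\rangle$ is prime of height $2$ and contains $J$ because $p_{\mu,1}$ and $q_{d-\mu,1}$ are $\X$-linear forms whose $\K[\T]$-coefficients are homogeneous of positive $\T$-degrees $\mu$ and $d-\mu$. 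Once $\mathrm{ht}(J)=2$ is established, regularity of the sequence follows from the Cohen--Macaulay property.

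For the saturation statement, one inclusion is essentially formal: $\mathrm{Rees}(I)\cong\K[\T,\X]/\kK$ is a domain in which $T_1$ is nonzero, hence a nonzerodivisor modulo $\kK$; if $(T_1,T_2)^n f\subseteq J\subseteq\kK$ then $T_1^n f\in\kK$ forces $f\in\kK$. For the reverse inclusion $\kK\subseteq J:\langle T_1,T_2\rangle^\infty$, the plan is to use the primary decomposition of $J$. By the regularity established above, $J$ is a complete intersection in a Cohen--Macaulay ring, so $\K[\T,\X]/J$ is itself Cohen--Macaulay and $J$ is unmixed: its associated primes coincide with its height-$2$ minimal primes. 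I would then identify these minimal primes as exactly $\kK$ and $\langle T_1,T_2\rangle$ by localizing at the multiplicative set $\K[\T]\setminus\{0\}$. In $\K(\T)[\X]$, the Hilbert--Burch theorem equates the cross product of the coefficient vectors of $p_{\mu,1}$ and $q_{d-\mu,1}$ with $\pm(u_1(\T),u_2(\T),u_3(\T))$, so the two $\X$-linear forms are $\K(\T)$-linearly independent and cut out the reduced point $\u(\T)\in\P^2_{\K(\T)}$. Hence $J\otimes\K(\T)$ is the prime ideal of that point, whose contraction to $\K[\T,\X]$ is precisely $\kK$; this simultaneously forces any other minimal prime to contain $\langle T_1,T_2\rangle$ (hence to equal it) and shows that the $\kK$-primary component of $J$ is $\kK$ itself. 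Writing $J=\kK\cap Q$ with $Q$ being $\langle T_1,T_2\rangle$-primary, for some $n$ one has $\langle T_1,T_2\rangle^n\subseteq Q$, whence $\langle T_1,T_2\rangle^n\cdot\kK\subseteq\kK\cap Q=J$.

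The main obstacle is the generic reducedness of $J$ along $\kK$, i.e.\ verifying that the $\kK$-primary component of $J$ is exactly $\kK$ and not a strict symbolic power $\kK^{(k)}$. The Hilbert--Burch identity supplies this input: it guarantees that, generically in $\T$, the two $\X$-linear forms $p_{\mu,1}$ and $q_{d-\mu,1}$ cut out a reduced point in $\P^2_{\K(\T)}$, which rules out any thickening along the $\kK$-direction.
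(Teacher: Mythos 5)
First, note that the paper does not actually prove this statement: it is quoted verbatim from Proposition 3.6 of [BJ03], so your argument can only be judged on its own terms. The overall plan (complete intersection, hence unmixedness, then a primary decomposition whose components are $\kK$ and an $\langle T_1,T_2\rangle$-primary ideal) is a reasonable and standard route, and your treatment of the inclusion $\bigcup_n \langle p_{\mu,1},q_{d-\mu,1}\rangle:\langle T_1,T_2\rangle^n\subseteq\kK$ and of the $\kK$-primary component (via localizing at $\K[\T]\setminus\{0\}$) is fine. But two steps, as written, do not hold up. The first is the height bound underlying the regularity claim: exhibiting two distinct height-two primes containing $J:=\langle p_{\mu,1},q_{d-\mu,1}\rangle$ only gives $\mathrm{ht}(J)\leq 2$, never the lower bound. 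For instance $\langle xy,xz\rangle\subset\K[x,y,z]$ is contained in the two height-two primes $\langle x,y\rangle$ and $\langle y,z\rangle$ and still has height one. What must be excluded is a common factor of $p_{\mu,1}$ and $q_{d-\mu,1}$, and this is where the $\mu$-basis structure has to enter: a common factor lying in $\K[\T]$ would produce a syzygy contradicting that the pair is a basis of the free module $\mbox{Syz}(I)$, while a common $\X$-linear factor would force the cross product of the coefficient vectors, which by Hilbert--Burch is proportional to $\u(\T)$, to vanish. You never make this argument, so the regularity statement is not established.

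The second gap is the assertion that the localization computation ``simultaneously forces any other minimal prime to contain $\langle T_1,T_2\rangle$.'' What the localization at $\K[\T]\setminus\{0\}$ actually shows is only that any associated prime of $J$ other than $\kK$ contains some nonzero $g(\T)\in\K[\T]$, hence (being bihomogeneous) a linear form $\ell(\T)=bT_1-aT_2$. That is strictly weaker: a priori $J$ could have a height-two component of the shape $\langle \ell(\T),m(\X)\rangle$ sitting over the single point $(a:b)\in\P^1$, and such a component is \emph{not} removed by saturating with respect to $\langle T_1,T_2\rangle$, so the claimed equality would fail if it existed. Ruling it out requires that the specializations $p_{\mu,1}(a,b,\X)$ and $q_{d-\mu,1}(a,b,\X)$ be linearly independent linear forms for \emph{every} $(a:b)\in\P^1$, not merely generically; this is exactly the statement that the cross product $\u(\T)$ has no zero on $\P^1$, i.e.\ that $u_1,u_2,u_3$ have no common factor. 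Your write-up invokes the Hilbert--Burch identity only ``generically in $\T$,'' which settles the reducedness along $\kK$ but leaves these potential extra components untouched. Both gaps are repairable with the same pointwise use of the cross-product identity, but as it stands the proof is incomplete at precisely the places where the basepoint-freeness of the parameterization has to be used.
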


As explained in \cite[Section $2$]{bus09}, in order to search for a set of generators of $\kK$, it is enough to consider forms of $\T$-degree lower than
$d$. Our next result is a refinement of this bound, which
essentially states that  we can replace $d-1$ by $d-\mu$.

\begin{theorem}\label{mu}
Let $u_1(\T),\,u_2(\T),\,u_3(\T)\in\K[\T]$ be homogeneous polynomials of
degree $d$ having no common factors. A minimal set of
generators of $\kK$ can be found with all its elements having
$\T$-degree  strictly less than $d-\mu$ except for the
generators of $\kK_{\ast,1}$ with $\T$-degree $d-\mu$.
\end{theorem}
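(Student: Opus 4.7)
The approach rests on the saturation description from Proposition \ref{1.1}: setting $J:=\langle p_{\mu,1},\,q_{d-\mu,1}\rangle$, one has $\kK=\bigcup_{n\geq 0} J:\langle T_1,T_2\rangle^n$. The $\X$-degree one case is immediate, since $\kK_{*,1}$ is the free $\K[\T]$-module on the $\mu$-basis $\{p_{\mu,1},\,q_{d-\mu,1}\}$ with $\T$-degrees $\mu\leq d-\mu$, so only $q_{d-\mu,1}$ (together with $p_{\mu,1}$ when $\mu=d-\mu$) reaches $\T$-degree $d-\mu$, which is precisely the allowed exception.

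For $\X$-degree $j\geq 2$ my plan is a downward induction on $i\in\{d-\mu,\,\ldots,\,d-1\}$, where the upper bound comes from \cite[Section 2]{bus09}. I want to show that every bihomogeneous $P\in\kK_{i,j}$ lies in the sum of $J\cdot\K[\T,\X]$ and the ideal generated by $\bigoplus_{i'<d-\mu}\kK_{i',*}$. Fix such a $P$ and, using Proposition \ref{1.1}, pick the minimal $N\geq 1$ with $(T_1,T_2)^N P\subset J$. Write $T_k^N P=A_k\,p_{\mu,1}+B_k\,q_{d-\mu,1}$ for $k=1,2$, where $A_k$ has $\T$-degree $i+N-\mu$ and $B_k$ has $\T$-degree $i+N-(d-\mu)$. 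Equating $T_2^N$ times the $k=1$ identity with $T_1^N$ times the $k=2$ identity and using that the sequence $(p_{\mu,1},\,q_{d-\mu,1})$ is regular (Proposition \ref{1.1}), I obtain a cofactor $C\in\K[\T,\X]$ with $T_2^N A_1-T_1^N A_2=C\,q_{d-\mu,1}$ and $T_2^N B_1-T_1^N B_2=-C\,p_{\mu,1}$.

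From these Koszul-type identities, the plan is to cancel the saturation factors $T_1^N,\,T_2^N$ and produce a representative of the class of $P$ modulo $J$ whose $\T$-degree is strictly less than $i$, by invoking the minimality of $N$ (which forbids $A_k,\,B_k$ from being uniformly divisible by $T_k$) together with the $\mu$-basis property (no syzygies of $\T$-degree smaller than $\mu$). Iterating this descent, after at most $\mu-1$ steps the residue class lands in $\T$-degree $<d-\mu$, where it is already absorbed into the generators of lower $\T$-degree and the induction closes.

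The main obstacle is precisely this divisibility extraction: one must verify that removing the powers of $T_1,\,T_2$ from the Koszul identities always yields a cofactor that is genuinely an element of $\kK$ of $\T$-degree $<i$, rather than a polynomial outside the ideal, and that each iteration consistently lowers the $\T$-degree. This demands careful bookkeeping of the bidegree shifts through the Koszul relations, and the $\mu-1$ iterations are exactly what is needed to bridge the gap between the bound $d-1$ from \cite{bus09} and the sharper bound $d-\mu$ asserted in the theorem.
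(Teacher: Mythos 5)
Your reduction to Proposition \ref{1.1} and the treatment of $\kK_{*,1}$ are fine, but the heart of the theorem --- the descent that rewrites an element $P\in\kK_{i,j}$ with $i\geq d-\mu$ in terms of elements of strictly smaller $\T$-degree modulo $\langle p_{\mu,1},q_{d-\mu,1}\rangle$ --- is exactly the step you leave unproved, and you say so yourself when you call the ``divisibility extraction'' the main obstacle. The Koszul relations $T_2^NA_1-T_1^NA_2=C\,q_{d-\mu,1}$ and $T_2^NB_1-T_1^NB_2=-C\,p_{\mu,1}$ only record a syzygy between the two chosen cofactor pairs; neither they nor the minimality of $N$ produce an element of $\kK_{i-1,j}$ congruent to $P$ modulo the $\mu$-basis (minimality of $N$ does not even prevent, say, $T_1^{N-1}P$ from lying in the ideal, it only says some monomial of degree $N-1$ in $T_1,T_2$ times $P$ does not). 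So as written the proposal is a plan rather than a proof. There is also a small bookkeeping slip: descending from $i\leq d-1$ to $\T$-degree $<d-\mu$ may take $\mu$ steps, not $\mu-1$; and in fact no a priori bound $i\leq d-1$ from \cite{bus09} is needed once the descent works for every $i\geq d-\mu$.

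The missing idea --- and the route the paper takes --- is to use the saturation for a single variable and then specialize. From $T_2^aP=\alpha\,p_{\mu,1}+\beta\,q_{d-\mu,1}$, setting $T_1=0$ and comparing powers of $T_2$ shows that the coefficient $W(\X)$ of $T_2^{i}$ in $P$ lies in the $\K[\X]$-ideal generated by the coefficients $L_\mu(\X)$ of $T_2^{\mu}$ in $p_{\mu,1}$ and $M_{d-\mu}(\X)$ of $T_2^{d-\mu}$ in $q_{d-\mu,1}$, say $W=AL_\mu+BM_{d-\mu}$. Then $Q:=P-T_2^{\,i-\mu}A\,p_{\mu,1}-T_2^{\,i-d+\mu}B\,q_{d-\mu,1}$ vanishes at $T_1=0$, hence $Q=T_1\tilde Q$ with $\tilde Q\in\kK_{i-1,j}$, which is precisely the degree-lowering representative your sketch was trying to manufacture from the Koszul identities; iterating it until $i=d-\mu$ gives the theorem. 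Without this step (or a genuine substitute), the bound $d-\mu$ is not established by your argument.
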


\begin{proof}
Let $P(\T,\X)\in\kK_{i,j}$ with $i\geq d-\mu$, and
$\{p_{\mu,1}(\T,\X),\,q_{d-\mu,1}(\T,\X)\}$ as above,
a $\K[\T]$-basis of $\kK_{*,1}$. Let $L_\mu(\X)$ (resp. $M_{d-\mu}(\X)$) be
the coefficient of $T_2^\mu$ (resp. $T_2^{d-\mu}$) in
$p_{\mu,1}(\T,\X)$ (resp. $q_{d-\mu,1}(\T,\X)$). Also, let $W(\X)$
be the coefficient of $T_2^i$ in $P(\T,\X)$. As
$P(\T,\X)\in\kK_{i,j},$  due to Proposition \ref{1.1} we have
that there exists $a\in\N,\, \alpha(\T,\X),\,\beta(\T,\X)\in\K[\T,\X]$ such that
\begin{equation}\label{t2}
T_2^aP(\T,\X)=\alpha(\T,\X)p_{\mu,1}(\T,\X)+\beta(\T,\X)q_{d-\mu,1}(\T,\X).
\end{equation}
We set $T_1=0$ in
\eqref{t2}, and get an expression of the form
$$W(\X)=A(\X)L_{\mu}(\X)+B(\X)M_{d-\mu}(\X),
$$
with $A(\X),\,B(\X)\in\K[\X]$. Set then
\begin{equation}\label{recur}
Q(\T,\X):=P(\T,\X)-T_2^{i-\mu}A(\X)p_{\mu,1}(\T,\X)-T_2^{i-d+\mu}B(\X)q_{d-\mu,1}(\T,\X)
\end{equation}
By setting $T_1=0$ in \eqref{recur}, it is easy to see that
$Q(\T,\X)$ vanishes, so we have that
$$Q(\T,\X)=T_1\,\tilde{Q}(\T,\X)$$ with $\tilde{Q}(\T,\X)\in\kK_{i-1,j}$. If $i-1\geq d-\mu$, we have then that 
$$P(\T,\X)\in\langle \tilde{Q}(\T,\X),\,p_{\mu,1}(\T,\X),\,q_{d-\mu,1}(\T,\X)\rangle\subset\langle \cup_{\ell\leq i-1}\kK_{\ell,j}\rangle;$$
and by iterating this argument with $\tilde{Q}(\T,\X)$ instead of $P(\T,\X)$, we
conclude that $P(\T,\X)\in\langle\cup_{\ell\leq d-\mu}\kK_{\ell,j}\rangle$.
\par
If $i=d-\mu$, reasoning as above we arrive to
$$P(\T,\X)\in\langle\cup_{\ell\leq d-\mu-1}\kK_{\ell,j}\rangle+\langle p_{\mu,1}(\T,\X),q_{d-\mu,1}(\T,\X)\rangle,$$
and hence the claim follows.
\end{proof}

\section{The Rees Algebra of curves parameterizable by conics}\label{Rees}
All along this section we will assume that $\phi$ is
parameterizable by conics and not by lines, (i.e.\ $d>3$, see Proposition \ref{d'} ). Let
$(F_1(\X),F_2(\X))$ be the pair of forms of degree $2$ without
common factors defining the inverse of $\phi$. Then, due to
Corollary \ref{maxmu} we know that
$\mu=\lfloor\frac{d}{2}\rfloor$. We will describe a set of minimal
generators of $\kK$ by computing successive Morley forms ---as in
\cite{CD10}--- between two generators of the $\mu$-basis and $T_1F_2(\X)-T_2F_1(\X)$. There will also
be a moving conic that will come into play if $d$ is even.

We start with the following proposition, which will be useful in
the sequel.
\begin{proposition}\label{prop2}
If $2i+j<d$, then every nonzero element of $\kK_{i,j}$ is a
polynomial multiple of $T_1F_2(\X)-T_2F_1(\X)$.
\end{proposition}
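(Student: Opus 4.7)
The plan is to separate the argument according to the value of $j$, and to reduce the main case to a combination of Lemma \ref{lem1} (which converts $P(\T,\X)$ modulo $T_1F_2(\X)-T_2F_1(\X)$ into the specialization $P(F_1,F_2,\X)$) and Proposition \ref{multiple} (which controls that specialization via divisibility by $E(\X)$).

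First I would dispose of the small $j$ cases. If $j=0$, then $P(\T,\X)\in\K[\T]$ and the condition $P(\T,\u(\T))=0$ forces $P=0$, so the claim is vacuous. If $j=1$, the hypothesis $2i+1<d$ combined with $\mu=\lfloor d/2\rfloor$ (Corollary \ref{maxmu}) gives $i<\mu$; but the minimal $\T$-degree of a nonzero element of $\mathrm{Syz}(I)\cong\kK_{\ast,1}$ is $\mu$, so $\kK_{i,1}=0$ and the claim is again vacuous.

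For the main case $j\geq 2$, I would apply Lemma \ref{lem1} with $j_0=2$ to obtain
\[
F_2(\X)^i P(\T,\X)-T_2^i P(F_1(\X),F_2(\X),\X)=\big(T_1F_2(\X)-T_2F_1(\X)\big)Q(\T,\X)
\]
for some $Q\in\K[\T,\X]$. Now $P(F_1(\X),F_2(\X),\X)$ is a form in $\X$ of degree $2i+j<d$; by Proposition \ref{multiple}, since $P\in\kK_{i,j}$, this form is a multiple of the implicit equation $E(\X)$, which has degree $d$. The only possibility is $P(F_1(\X),F_2(\X),\X)=0$. Therefore
\[
F_2(\X)^iP(\T,\X)=\big(T_1F_2(\X)-T_2F_1(\X)\big)Q(\T,\X).
\]

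To finish, I would invoke irreducibility: since $\phi$ is not parameterizable by lines, Proposition \ref{prop22} tells us that ${\mathcal F}(\T,\X)=T_1F_2(\X)-T_2F_1(\X)$ defines a nondegenerate conic over $\overline{\K(\T)}$, hence is irreducible in $\overline{\K(\T)}[\X]$; being linear in $\T$, no nontrivial factor lying in $\K[\T]$ can be split off, so ${\mathcal F}(\T,\X)$ is irreducible in $\K[\T,\X]$. The prime ideal it generates cannot contain $F_2(\X)$ (a polynomial in $\X$ alone, of degree $2$), so it must contain $P(\T,\X)$, giving the desired divisibility. The only step with any subtlety is the irreducibility of ${\mathcal F}(\T,\X)$ in $\K[\T,\X]$; everything else is bookkeeping with Lemma \ref{lem1} and Proposition \ref{multiple}.
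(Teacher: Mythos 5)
Your proof is correct and follows essentially the same route as the paper's: apply Lemma \ref{lem1} (equation \eqref{connect}), use Proposition \ref{multiple} together with the degree count $2i+j<d$ to force $P(F_1(\X),F_2(\X),\X)=0$, and then conclude from primality of $\langle T_1F_2(\X)-T_2F_1(\X)\rangle$ that $P$ is a multiple of it. Your separate treatment of $j=0,1$ and the explicit verification that $T_1F_2(\X)-T_2F_1(\X)$ is irreducible in $\K[\T,\X]$ are just details the paper leaves implicit (the latter is a standing hypothesis via Proposition \ref{multiple}), not a different method.
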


\begin{proof}
Let  $P(\T,\X)\in \kK_{i,j}$. Due to (\ref{connect}) we have
$$F_2(\X)^iP(\T,\X)-T_2^iP(F_1(\X),F_2(\X),\X)=(T_1F_2(\X)-T_2F_1(\X))Q(\T,\X)
$$
with ---thanks to Proposition \ref{multiple}---
$P(F_1(\X),F_2(\X),\X)$ a homogeneous polynomial multiple of
$E(\X)$ of degree $2i+j<d=\deg(E(\X))$. As $E(\X)$ is
irreducible, we have then  $P(F_1(\X),F_2(\X),\X)=0$ and so
$$F_2(\X)^iP(\T,\X)=(T_1F_2(\X)-T_2F_1(\X))Q(\T,\X),
$$
which implies that there exists $Q_0(\T,\X)$ such that
$$P(\T,\X)=(T_1F_2(\T,\X)-T_2F_1(\T,\X))Q_0(\T,\X).$$
\end{proof}

\subsection{$d$ odd}\label{odd}
In this section we will assume $d=2k+1$. By Corollary \ref{maxmu},
we then have $\mu=k$. Let $\{p_{k,1}(\T,\X),\,q_{k+1,1}(\T,\X)\}$ be a 
basis of $\mbox{Syz}(I)$.
\smallskip
\begin{proposition}\label{polar}
Up to a nonzero constant in $\K$, we have that
\begin{equation}\label{jj}
p_{k,1}(F_1(\X),F_2(\X),\X)=E(\X).
\end{equation}
\end{proposition}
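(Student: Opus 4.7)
The plan is to combine a degree count with the divisibility statement of Proposition \ref{multiple}, and then rule out the vanishing case via Lemma \ref{lem1}.

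First I would compute the total degree of $p_{k,1}(F_1(\X),F_2(\X),\X)$. Since $p_{k,1}(\T,\X)$ has bidegree $(k,1)$ and each $F_i(\X)$ has degree $2$, substituting $T_i \mapsto F_i(\X)$ yields a homogeneous polynomial in $\X$ of degree $2k+1 = d$. By Proposition \ref{multiple} (applied to $P_{i,j} = p_{k,1}$, which lies in $\kK$ by construction), $p_{k,1}(F_1(\X),F_2(\X),\X)$ is a polynomial multiple of the irreducible degree-$d$ form $E(\X)$. Hence it equals $\lambda\, E(\X)$ for some $\lambda \in \K$, and everything reduces to showing $\lambda \neq 0$.

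For the non-vanishing, I would argue by contradiction. Assume $p_{k,1}(F_1(\X),F_2(\X),\X)=0$. Applying Lemma \ref{lem1} with $P = p_{k,1}$, $i = k$, $j = 1$, $j_0 = 2$, the right-hand side of \eqref{connect} collapses and we obtain
\[
F_2(\X)^k\, p_{k,1}(\T,\X) \;=\; \bigl(T_1 F_2(\X)-T_2 F_1(\X)\bigr)\, Q(\T,\X)
\]
for some $Q(\T,\X)\in\K[\T,\X]$. The polynomial $T_1 F_2(\X)-T_2 F_1(\X)$ is irreducible in $\K[\T,\X]$ because it is linear in $\T$ with coefficients $F_2(\X), -F_1(\X)$ that share no common factor, so the ideal it generates is prime. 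Since $F_2(\X)$ has $\T$-degree $0$, it does not belong to this ideal, and therefore $p_{k,1}(\T,\X)$ must. But $p_{k,1}$ has $\X$-degree $1$ while $T_1 F_2(\X)-T_2 F_1(\X)$ has $\X$-degree $2$, forcing $p_{k,1}(\T,\X)=0$, which contradicts the fact that $p_{k,1}$ is part of a basis of $\mathrm{Syz}(I)$.

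The only non-routine step is recognizing that the irreducibility of $T_1F_2(\X)-T_2F_1(\X)$ (which follows from $\gcd(F_1,F_2)=1$) is exactly what is needed to rule out $\lambda=0$; once this is in place, the rest is a degree count plus Lemma \ref{lem1}. No moving-conic considerations or parity of $d$ enter here, so this argument should go through cleanly for the odd case $d=2k+1$.
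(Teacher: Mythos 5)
Your proposal is correct and follows essentially the same route as the paper: a degree count showing the substitution has degree $d$, Proposition \ref{multiple} to get divisibility by $E(\X)$, and then Lemma \ref{lem1} (equation \eqref{connect}) plus the primality of $\langle T_1F_2(\X)-T_2F_1(\X)\rangle$ to rule out the vanishing case via the $\X$-degree obstruction. The only difference is that you spell out the irreducibility of $T_1F_2(\X)-T_2F_1(\X)$, which the paper takes for granted from its earlier use of the same prime ideal.
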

\begin{proof}
The polynomial $p_{k,1}(F_1(\X),F_2(\X),\X)$ is either identically zero
or has degree $d=\deg(E(\X))$ and, due to Proposition
\ref{multiple}, we know that it is a multiple of $E(\X)$. If we
show that it is not identically zero, then we are done. But if
this were not the case, due to (\ref{connect}) we would have to
conclude that $p_{k,1}(\T,\X)$ is a multiple of
$T_1F_2(\X)-T_2F_1(\X)$, which is impossible as the latter has
degree $2$ in the variables $\X$'s.
\end{proof}

\smallskip

We will define one nonzero element in $P_j(\T,\X)\in\kK_{j,d-2j}$
for $j=0,1,\ldots, k-1$. We will do this recursively starting from
$\kK_{k-1,2}$ and increasing the $\X$-degree at the cost of decreasing the $\T$-degree. This is the analogue of ``computing Sylvester forms'' in
\cite{cox08,bus09}, and we will perform essentially the same operations we have done in \cite{CD10} in order to get a minimal set of
generators of $\kK$ for curves parameterizable by lines.

Set then $P_k(\T,\X):=p_{k,1}(\T,\X);$ and for  $j$ from $0$ to $k-1$ do:
\begin{itemize}
\item[-] write $P_{k-j}(\T,\X)$ as
$A_{k-j}(\T,\X)T_1+B_{k-j}(\T,\X)T_2$ (clearly there is more than
one way of doing this, just choose one),  
\item[-] Set
$P_{k-j-1}(\T,\X):=A_{k-j}(\T,\X)F_1(\X)+B_{k-j}(\T,\X)F_2(\X)$.
\end{itemize}
We easily check that $P_j(\T,\X)\in \kK_{j,d-2j}$ for $j=0,\ldots,
k-1$, and also that (up to a nonzero constant in $\K$), 
\begin{equation}\label{remm}
P_j(F_1
(\X),F_2(\X),\X)=E(\X). 
\end{equation}
In addition, it is easy to check that
$P_0(\T,\X)=E(\X)$.

\begin{theorem}\label{mtodd}
A minimal set of generators of $\kK$ is
$$J:=\{T_1F_2(\X)-T_2F_1(\X),\,q_{k+1,1}(\T,\X),\,P_0(\T,\X),\ldots,\,P_k(\T,\X)\}.
$$
\end{theorem}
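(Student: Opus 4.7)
The plan is to prove the theorem in two stages: first that the ideal generated by $J$ equals $\kK$, and second that no proper subset of $J$ suffices. All of $J$ lies in $\kK$ by construction (Proposition \ref{polar} and the recursive definition), so the content is in these two claims.

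\textbf{Generation.} By Theorem \ref{mu}, it is enough to realize generators at $\T$-degree $\leq k$, plus the already-included $q_{k+1,1}$ which accounts for the ``new'' part of $\kK_{k+1,1}$ (the remaining elements of $\kK_{k+1,1}$ being spanned over $\K$ by $T_1 P_k$ and $T_2 P_k$). So I would take an arbitrary bihomogeneous $P \in \kK_{i,j}$ with $i \leq k$ and split into cases. If $2i + j < d$, Proposition \ref{prop2} immediately gives $P \in \langle T_1 F_2(\X) - T_2 F_1(\X) \rangle$. If $2i + j \geq d$, the key step combines Proposition \ref{multiple} with the essential property of the $P_i$: by construction (together with Proposition \ref{polar}) there is a nonzero $c_i \in \K$ with $P_i(F_1,F_2,\X) = c_i E(\X)$, while Proposition \ref{multiple} yields $P(F_1,F_2,\X) = H(\X) E(\X)$ for some $H \in \K[\X]$ of degree $2i+j-d$. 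Applying Lemma \ref{lem1} to both $c_i P$ and $H P_i$ (both of $\T$-degree $i$) and subtracting yields
\[ F_2(\X)^i \bigl( c_i P(\T,\X) - H(\X) P_i(\T,\X) \bigr) \in \langle T_1 F_2(\X) - T_2 F_1(\X) \rangle. \]
Since the standing assumption ``not parameterizable by lines'' forces ${\mathcal F}(\T,\X)$ to define a nondegenerate conic (Proposition \ref{prop22}), the form $T_1 F_2 - T_2 F_1$ is irreducible; it is also coprime to $F_2(\X)$ (else $F_2 \mid T_2 F_1$, hence $F_2 \mid F_1$, contradicting coprimality). Therefore $F_2^i$ may be cancelled and I conclude $c_i P - H P_i \in \langle T_1 F_2 - T_2 F_1 \rangle$, placing $P$ in $\langle P_i,\, T_1 F_2 - T_2 F_1 \rangle \subseteq \langle J \rangle$.

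\textbf{Minimality.} For each $g \in J$ the plan is to show $g \notin \langle J \setminus \{g\} \rangle$ by bidegree bookkeeping. Each $P_j$ sits at bidegree $(j, d-2j)$; a short case analysis on possible multipliers rules out every other generator of $J$ as a contributor to this bidegree, leaving only multiples of $T_1 F_2 - T_2 F_1$. But any such multiple vanishes upon the substitution $(T_1,T_2) \mapsto (F_1(\X),F_2(\X))$, whereas $P_j$ specializes to $c_j E(\X) \neq 0$; hence $P_j$ is irredundant. For $T_1 F_2 - T_2 F_1$ at bidegree $(1,2)$, the inequality $d \geq 5$ guarantees that no other generator sits at a bidegree componentwise $\leq (1,2)$, so it cannot be produced by a non-negative multiplier. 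Finally, $q_{k+1,1}$ at bidegree $(k+1,1)$ could only be reproduced as a $\K[\T]$-linear combination of $P_k = p_{k,1}$, contradicting the linear independence of a $\mu$-basis of $\mbox{Syz}(I)$.

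\textbf{Main obstacle.} The delicate step is the cancellation in the generation argument: correcting $P$ against $P_i$ modulo $T_1 F_2 - T_2 F_1$ requires dividing out $F_2(\X)^i$ after applying Lemma \ref{lem1}, and this hinges on the irreducibility of $T_1 F_2(\X) - T_2 F_1(\X)$ together with its coprimality to $F_2$. These facts in turn use the nondegeneracy of the associated conic, which must be carefully extracted from the hypothesis that $\C$ is not parameterizable by lines. Without this chain and without the uniform fact that $P_i(F_1,F_2,\X)$ is a \emph{nonzero} constant multiple of $E(\X)$ for every $0 \leq i \leq k$, the bootstrap from $\kK_{k,1}$ up through higher $\X$-degrees would collapse.
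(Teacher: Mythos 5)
Your proposal is correct and follows essentially the same route as the paper's proof: reduction to $\T$-degree $\leq k$ via Theorem \ref{mu}, the case $2i+j<d$ handled by Proposition \ref{prop2}, the correction $P - h\,P_i$ against the irreducible form $T_1F_2(\X)-T_2F_1(\X)$ via Proposition \ref{multiple} and Lemma \ref{lem1} with cancellation of $F_2(\X)^i$, and minimality by bidegree bookkeeping plus the nonvanishing of $P_j(F_1(\X),F_2(\X),\X)$ and the $\mu$-basis independence. Your explicit justification of the cancellation step (primality of $\langle T_1F_2(\X)-T_2F_1(\X)\rangle$ and its coprimality with $F_2(\X)$) only makes explicit what the paper leaves implicit.
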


\begin{proof}
Let us first check that $J$ is a minimal set of generators of the
ideal generated by its elements. The forms
$T_1F_2(\X)-T_2F_1(\X)$,
$q_{k+1,1}(\T,\X)$,$P_k(\T,\X),\ldots,\,P_0(\T,\X)$  have
bidegrees $(1,2)$, $(k+1,1)$, $(k,1)$, $(k-1,3)$
$\dots$, $(1,2k-1)$, $(0,2k+1)$  respectively. Taking into account these
bidegrees we observe that, since $k\geq 2$, it is clear that
$T_1F_2(\X)-T_2F_1(\X)$ cannot be a polynomial combination of the
others. Also, $q_{k+1,1}(\T,\X)$ can only be a multiple of $P_k(\T,\X),$ which is impossible since they are a basis of $\mbox{Syz}(I)$.

Suppose now that $P_j(\T,\X)$ for some $j=0,\dots,k$ is a
polynomial combination of the others; then
\[
P_j(\T,\X)=H_0(\T,\X)(T_1F_2(\X)-T_2F_1(\X))+H_1(\T,\X)q_{k+1,1}(\T,\X)+\sum_{i\neq
j}G_i(\T,\X)P_i(\T,\X) \]
 All the elements $\{P_j(\T,\X)$,
$j=0,\dots ,k\}$, are nonzero and have different bidegrees $(j,d-2j).$ In addition, $\deg_{\T}(q_{k+1,1}(\T,\X))=k+1>j.$ Thus, 
$$H_1(\T,\X)= G_i(\T,\X)=0, \ i\neq j.$$ It remains
to show that $P_j(\T,\X)$ is not a multiple of
$T_1F_2(\X)-T_2F_1(\X)$. But if this were the case, then we would
have that $P_j(F_1(\X),F_2(\X),\X)=0$, in contradiction with \eqref{remm}. We conclude then that
$J$ is a set of minimal generators of $\langle J\rangle.$
\par
Now we have to show that $\langle J\rangle =\kK$, one of the inclusions being obvious. Let $P(\T,\X)$
be a nonzero element in $\kK_{i,j}$. If $2i+j<d$ then due to
Proposition \ref{prop2}, $P(\T,\X)$ is a multiple of
$T_1F_2(\X)-T_2F_1(\X)$ and the claim follows straightforwardly.
Suppose then $2i+j\geq d$. Thanks to Theorem \ref{mu} we only have
to look at  $0\leq i\leq k$. As $P(F_1(\X),F_2(\X),\X)=E(\X)h(\X)$
(due to Proposition \ref{multiple}), then by applying
\eqref{connect} to both $P(\T,\X)$ and $P_{i}(\T,\X)h(\X)$ we will
get
$$F_2(\X)^i\left(P(\T,\X)-P_i(\T,\X)h(\X)
\right)\in\langle T_1F_2(\X)-T_2F_1(\X)\rangle,
$$
and from here we deduce
$$P(\T,\X)\in\langle P_i(\T,\X),\,T_1F_2(\X)-T_2F_1(\X)\rangle\subset\langle J\rangle.$$
\end{proof}

\subsection{$d$ even}\label{even} 
Suppose now that $d=2k$. In this case we have again $\mu=k$, but also $d-\mu=k$ and hence there are two generators
of $\kK_{*,1}$ with $\T$-degree $k$. As usual, denote with $\{p_k(\T,\X),\,q_k(\T,\X)\}$
 a basis of $\mbox{Syz}(I)$. One can show easily now that there
exist nonzero linear forms $L_F(\X),\,L_G(\X)\in\K[\X]$ such that
$$p_k(F_1(\X),F_2(\X),\X)=E(\X)L_F(\X),\ \mbox{and}
\ q_k(F_1(\X),F_2(\X),\X)=E(\X)L_G(\X),$$
so we cannot use neither of these elements to get something like \eqref{jj}.
However, by applying some known results derived from the method of moving conics
explored in \cite{SGD97,ZCG99}, it turns out that we can find a polynomial in $\kK_{k-1,2}$ which will play the role of
$p_{k,1}(\T,\X)$ in Proposition \ref{polar} for this case.
\begin{proposition}\label{mconic}
There exists a nonzero element  $Q(\T,\X)\in \kK_{k-1,2}$ such
that  $$Q(F_1(\X),F_2(\X),\X)=E(\X).$$ Moreover, as $\K$-vector
spaces we have
\begin{equation}\label{sumdir}
 \kK_{k-1,2}=Q(\T,\X)\cdot\K \oplus\langle T_1F_2(\X)-T_2F_1(\X)\rangle_{k-1,2}.
\end{equation}
\end{proposition}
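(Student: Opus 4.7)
The plan is to analyze the evaluation map
\[
\mathrm{ev}_F: \kK_{k-1,2} \to \K[\X], \qquad P(\T,\X) \mapsto P(F_1(\X), F_2(\X), \X),
\]
and to show via a kernel computation combined with a dimension count that its image equals the full line $\K \cdot E(\X)$. This will deliver both the existence of $Q$ and the direct sum decomposition simultaneously. First, the image of $\mathrm{ev}_F$ lies inside $\K \cdot E(\X)$: indeed $P(F_1,F_2,\X)$ is homogeneous of $\X$-degree $2(k-1)+2 = 2k = d$, and by Proposition \ref{multiple} it is a multiple of $E(\X)$, hence a scalar multiple. So $Q$ exists if and only if $\mathrm{ev}_F$ is nonzero.

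Next I would compute $\ker(\mathrm{ev}_F)$. If $P(F_1,F_2,\X)=0$, then Lemma \ref{lem1} with $i = k-1$ and $j = j_0 = 2$ yields
\[
F_2(\X)^{k-1}\, P(\T,\X) = \bigl(T_1 F_2(\X) - T_2 F_1(\X)\bigr)\,G(\T,\X).
\]
The nondegeneracy hypothesis on $\mathcal{F}(\T,\X)$, via Proposition \ref{prop22} and coprimality of $F_1,F_2$, implies that $T_1F_2 - T_2F_1$ is \emph{irreducible} in $\K[\T,\X]$: any nontrivial factorization would either split $\mathcal{F}(\T,\X)$ as a product of two $\X$-linear forms (forbidden by nondegeneracy) or extract a common factor of $F_1$ and $F_2$. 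Since $F_2$ does not depend on $T_1$ and is therefore not a multiple of $T_1F_2-T_2F_1$, we may cancel and conclude that $P$ itself is divisible by $T_1F_2-T_2F_1$. Bidegree considerations then force $P = s(\T)(T_1F_2 - T_2F_1)$ for some $s(\T) \in \K[\T]_{k-2}$, so that $\ker(\mathrm{ev}_F) = \langle T_1F_2 - T_2F_1\rangle_{k-1,2}$ has $\K$-dimension $k-1$.

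I would then bound $\dim_\K \kK_{k-1,2}$ from below by a straightforward count: the ambient space $\K[\T,\X]_{k-1,2}$ has dimension $k\cdot 6 = 6k$, while $\kK_{k-1,2}$ is the kernel of the specialization $P(\T,\X)\mapsto P(\T,\u(\T))$, whose image lies in $\K[\T]_{5k-1}$ of dimension $5k$. Hence $\dim_\K \kK_{k-1,2} \geq 6k - 5k = k$. Combining this with $\dim_\K \ker(\mathrm{ev}_F) = k-1$, the image of $\mathrm{ev}_F$ has dimension at least $1$, so it equals $\K\cdot E(\X)$, giving a polynomial $Q \in \kK_{k-1,2}$ with $Q(F_1,F_2,\X) = E(\X)$ after rescaling.

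Finally, rank-nullity for $\mathrm{ev}_F$ forces $\dim_\K \kK_{k-1,2} = k$ exactly, and since $Q \notin \ker(\mathrm{ev}_F) = \langle T_1F_2-T_2F_1\rangle_{k-1,2}$, the two subspaces $\K \cdot Q$ and $\langle T_1F_2-T_2F_1\rangle_{k-1,2}$ are independent and together exhaust $\kK_{k-1,2}$ by dimension, yielding \eqref{sumdir}. The main obstacle in this plan is the kernel computation, which hinges on the irreducibility of $T_1F_2 - T_2F_1$ in $\K[\T,\X]$; once that is in hand, Lemma \ref{lem1} and elementary linear algebra do the rest.
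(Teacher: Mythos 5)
Your proof is correct, and it takes a more self-contained route than the paper's. The paper obtains the key dimension fact from the moving-conic literature: since $d$ even and $\mu=k$ force $\kK_{k-1,1}=0$, Theorem 5.4 of \cite{SGD97} is invoked to produce $k$ linearly independent elements of $\kK_{k-1,2}$; the multiples $s(\T)\,(T_1F_2(\X)-T_2F_1(\X))$ with $\deg s=k-2$ account for a $(k-1)$-dimensional subspace, so some $Q$ lies outside it, giving \eqref{sumdir}, and $Q(F_1(\X),F_2(\X),\X)$ is then a nonzero scalar multiple of $E(\X)$ because its vanishing would put $Q$ back into that subspace (the same Lemma \ref{lem1}-type argument you use). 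You instead organize everything around the evaluation map $P\mapsto P(F_1(\X),F_2(\X),\X)$, compute its kernel exactly as $\langle T_1F_2(\X)-T_2F_1(\X)\rangle_{k-1,2}$ via Lemma \ref{lem1} together with the irreducibility of $T_1F_2(\X)-T_2F_1(\X)$ (note that coprimality of $F_1,F_2$ alone already gives this irreducibility, since the polynomial has $\T$-degree one and any $\X$-homogeneous factor would be a common factor of $F_1$ and $F_2$; the nondegeneracy hypothesis is not needed for that step), and you replace the citation by the elementary coefficient count $\dim_\K\kK_{k-1,2}\ge 6k-5k=k$, after which rank--nullity simultaneously yields the existence of $Q$, the exact equality $\dim_\K\kK_{k-1,2}=k$, and \eqref{sumdir}. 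What your version buys is independence from \cite{SGD97} and an explicit proof that the dimension is exactly $k$; what the paper's version buys is brevity by leaning on the known moving-conic theorem. One small point worth recording in your write-up: Lemma \ref{lem1} requires $i=k-1>0$, which holds because $d>3$ and $d$ even give $k\ge 2$.
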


\begin{proof}
 In the language of moving curves, the fact that $d$ is even
and $\mu=k$ means that there are no moving lines of degree $k-1$
which follow the curve; that is, $\kK_{k-1,1}=0$. This condition
implies (see for instance Theorem $5.4$ in \cite{SGD97}) that
there exist $k$ linearly independent elements in $\kK_{k-1,2}$.
One can easily check that if we multiply $T_1F_2(\X)-T_2F_1(\X)$
by a polynomial $r(\T)$ of degree $k-2$, we then get an element of
$\kK_{k-1,2}$. The dimension of the $\K$-vector space generated by
all these polynomials is then $k-1$. Hence, there is one form
$Q(\T,\X)\in\kK_{k-1,2}$ which does not belong to this subspace,
and \eqref{sumdir} holds.
\par For this $Q(\T,\X)$ we easily get that $Q(F_1(\X),F_2(\X),\X)$ has to be a scalar multiple of $E(\X)$.
If it were zero,
then by using the same arguments as before we would have to
conclude that $Q(\T,\X)\in\langle T_1F_2(\X)-T_2F_1(\X)\rangle$,
which contradicts  \eqref{sumdir}.
\end{proof}

Now we will define nonzero elements in $\kK_{j,d-2j}$ for
$j=0,1,\ldots, k-1$. As before, we will do this recursively
starting from $\kK_{k-1,2}$ and increasing the $\X$-degree by
decreasing the $\T$-degree. Set $P_{k-1}(\T,\X):=Q(\T,\X)$ and,
for  $j$ from $0$ to $k-2$ do:
\begin{itemize}
\item[-] write $P_{k-1-j}(\T,\X)$ as
$A_{k-j}(\T,\X)T_1+B_{k-j}(\T,\X)T_2$ (there is more than
one way of doing this, just choose one), 
\item[-] Set
$P_{k-j-2}(\T,\X):=A_{k-j}(\T,\X)F_1(\X)+B_{k-j}(\T,\X)F_2(\X)$.
\end{itemize}
We easily check that $P_j(\T,\X)\in \kK_{j,d-2j}$ for $j=0,\ldots,
d-1$, and also that (up to a nonzero constant in $\K$),
\begin{equation}\label{remm2}
P_j(F_1(\X),F_2(\X),\X) = Q(F_1(\X),F_2(\X),\X) =E(\X)\ \forall
j=0,\ldots, k-1.
\end{equation} Also, by construction we have that
$P_0(\T,\X)=E(\X)$.

\begin{theorem}\label{mteven}
A minimal set of generators of $\kK$ is
$$J:=\{T_1F_2(\X)-T_2F_1(\X),\,P_0(\T,\X),\ldots,\,P_{k-1}(\T,\X),\,p_{k,1}(\T,\X),\,q_{k,1}(\T,\X)\}.
$$
\end{theorem}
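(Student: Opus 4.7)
The proof should follow the same template as Theorem \ref{mtodd}, splitting into two parts: first that $J$ minimally generates the ideal $\langle J\rangle$, and second that $\langle J\rangle = \kK$. The key differences from the odd case are that the $\mu$-basis consists of two elements $p_{k,1}(\T,\X),\,q_{k,1}(\T,\X)$ of the same $\T$-degree $k$, and the starting point of the descending Sylvester-form construction is the moving conic $Q(\T,\X)=P_{k-1}(\T,\X)$ produced by Proposition \ref{mconic}, rather than a linear syzygy.

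For minimality, I would list the bidegrees: $T_1F_2(\X)-T_2F_1(\X)$ has bidegree $(1,2)$, each $P_j(\T,\X)$ has bidegree $(j,2k-2j)$ for $j=0,\dots,k-1$, and both $p_{k,1},\,q_{k,1}$ have bidegree $(k,1)$. Assuming one generator is a polynomial combination of the others, the bidegrees of the potential coefficients force almost all of them to vanish: two distinct $P_i,\,P_j$ cannot combine to produce each other (the coefficients would need negative $\T$- or $\X$-degree); $p_{k,1}$ and $q_{k,1}$ can only receive a scalar contribution from each other, which is ruled out because they form a $\K[\T]$-basis of $\mbox{Syz}(I)$; and in any remaining case the equation reduces to $P_j(\T,\X)=H_0(\T,\X)(T_1F_2(\X)-T_2F_1(\X))$, which after substituting $(T_1,T_2)\mapsto(F_1(\X),F_2(\X))$ produces $E(\X)=0$ by \eqref{remm2}, a contradiction. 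The only slightly delicate case is $d=4$, where $P_{k-1}=Q$ and $T_1F_2(\X)-T_2F_1(\X)$ share the bidegree $(1,2)$, but the direct sum decomposition \eqref{sumdir} in Proposition \ref{mconic} precisely asserts that $Q(\T,\X)\notin\langle T_1F_2(\X)-T_2F_1(\X)\rangle$, which resolves this potential collision.

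For generation, let $P(\T,\X)\in\kK_{i,j}$ be nonzero. By Theorem \ref{mu} we may restrict to $i<k$ (elements of $\kK_{*,1}$ with $\T$-degree $k$ are already covered by the basis $p_{k,1},\,q_{k,1}$). If $2i+j<d$, Proposition \ref{prop2} gives $P\in\langle T_1F_2(\X)-T_2F_1(\X)\rangle\subset\langle J\rangle$. If $2i+j\geq d$, Proposition \ref{multiple} yields $P(F_1(\X),F_2(\X),\X)=E(\X)h(\X)$ for some $h(\X)\in\K[\X]$, and by \eqref{remm2} the same identity holds for $P_i(\T,\X)h(\X)$ up to the fixed nonzero scalar. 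Applying Lemma \ref{lem1} (identity \eqref{connect}) to both $P$ and $P_i\cdot h$ and subtracting, the specialized terms cancel and we obtain $F_2(\X)^i\bigl(P(\T,\X)-\lambda P_i(\T,\X)h(\X)\bigr)\in\langle T_1F_2(\X)-T_2F_1(\X)\rangle$ for an appropriate nonzero $\lambda\in\K$; since the latter ideal is prime and does not contain $F_2(\X)$ (inspect $\T$-degree), we conclude $P\in\langle P_i,\,T_1F_2(\X)-T_2F_1(\X)\rangle\subset\langle J\rangle$.

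The main obstacle I anticipate is the construction step preceding the theorem, namely ensuring that the polynomials $P_j(\T,\X)$ for $j<k-1$ indeed satisfy \eqref{remm2}; once these hold, the specialization argument above goes through mechanically. Concretely, one must verify that the recursive substitution $T_\ell\mapsto F_\ell(\X)$ applied to a decomposition $P_{k-1-j}(\T,\X)=A(\T,\X)T_1+B(\T,\X)T_2$ preserves the value $E(\X)$ at the level of $(F_1,F_2,\X)$-evaluation; this is the analogue of the descent step in the odd case, which only requires that $Q(\T,\X)$ specializes to $E(\X)$ and is guaranteed by the construction of $Q$ in Proposition \ref{mconic}.
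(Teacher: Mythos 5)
Your proposal is correct and follows essentially the same two-step argument as the paper: bidegree comparison (plus the specialization $T_\ell\mapsto F_\ell(\X)$ against \eqref{remm2}) for minimality, and the combination of Theorem \ref{mu}, Proposition \ref{prop2}, Proposition \ref{multiple} and Lemma \ref{lem1} for generation. Your extra attention to the $d=4$ collision of bidegrees, resolved via \eqref{sumdir}, and to the preservation of \eqref{remm2} under the recursive construction are just careful spellings-out of steps the paper treats as immediate.
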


\begin{proof}
As before, we  first check that $J$ is a minimal set of generators
of the ideal $\langle J\rangle$. We start again by verifying that
$p_k(\T,\X)$ and $q_k(\T,\X)$ cannot be combination of other
elements in the family due to the fact that they have minimal
$\X$-degree and $\K[\T]$-linearly independent. All the other elements
$P_j(\T,\X),\,j=0,\ldots, k-1$ are in different pieces of
bidegrees $(j,d-2j)$ so neither of them can be a polynomial
combination of the others. In addition, the form
$T_2F_1(\X)-T_1F_2(\X)$ is minimal with respect to the
$\T$-degree, so it is independent. It remains then show that
$P_j(\T,\X)$ is not a multiple of $T_1F_2(\X)-T_2F_1(\X)$. But if this
were the case, then we would have that
$P_j(F_1(\X),F_2(\X),\X)=0$, which contradicts \eqref{remm2}.
\par
In order to complete the proof, we must show that $\langle J\rangle =\kK$. As before, one of the inclusions is trivial.
Let then $P(\T,\X)$ be a nonzero element in $\kK_{i,j}$. If
$2i+j<d$ then due to Proposition \ref{prop2}, $P(\T,\X)$ is a
multiple of $T_1F_2(\X)-T_2F_1(\X)$ and the claim follows. Suppose
now $2i+j\geq d$. Thanks to Theorem \ref{mu} we only have to look
at  $0\leq i\leq k-1$. As $P(F_1(\X),F_2(\X),\X)=E(\X)h(\X)$ (due
to Proposition \ref{multiple}), then by applying \eqref{connect}
to both $P(\T,\X)$ and $P_{i}(\T,\X)h(\X)$ we will get
$$F_2(\X)^i\left(P(\T,\X)-P_i(\T,\X)h(\X)
\right)\in\langle T_1F_2(\X)-T_2F_1(\X)\rangle.
$$
From here we deduce $P(\T,\X)\in\langle
P_i(\T,\X),\,T_1F_2(\X)-T_2F_1(\X)\rangle\subset\langle J\rangle,$
and the claim follows.
\end{proof}

\begin{remark}
Note that the number of minimal generators in both cases $d=2k+1$
or $d=2k$ is always $k+3$, and also that a system of generators of
$\kK$ includes a $\K[\T]$-basis of $\mbox{Syz}(\I)$ and the implicit equation as expected.
\end{remark}

\setlength{\unitlength}{1mm}
\begin{picture}(120,65)
\put(0,0){\vector(1,0){38}}
 \put(39,0){$i$}
 \put(0,0){\vector(0,1){60}}
 \put(0,61){$j$}
 \put(3,6){\circle*{0.7} }
 \put(3,7){\tiny{$(1,2)$}}
\put(27,3){\circle*{0.7} }
 \put(27,4){\tiny{$(k,1)$}}
\put(24,9){\circle*{0.7} }
 \put(24,10){\tiny{$(k-1,3)$}}
\put(21,15){\circle*{0.7} }
 \put(21,16){\tiny{$(k-2,5)$}}
 \put(30,3){\circle*{0.7} }
 \put(33,4){\tiny{$(k+1,1)$}}
\put(0,57){\circle*{0.7}}
 \put(1,57){\tiny{$(0,2k+1)$}}
 \multiput(24,3)(1,0){2}{\line(1,0){0.5}}
 \multiput(24,3)(0,2){3}{\line(0,1){0.5}}
  \multiput(21,9)(1,0){2}{\line(1,0){0.5}}
 \multiput(21,9)(0,2){3}{\line(0,1){0.5}}
\multiput(0,51)(1,0){2}{\line(1,0){0.5}} \put(3,51){\circle*{0.7}
} \put(3,52){\tiny{$(1,2k-1)$}}
\multiput(3,45)(1,0){2}{\line(1,0){0.5}}
 \multiput(3,45)(0,2){3}{\line(0,1){0.5}}
\put(6,45){\circle*{0.7} }
 \put(12,33){\circle*{0.7} }
\put(15,27){\circle*{0.7} } \put(18,21){\circle*{0.7} }

\put(70,0){\vector(1,0){38}}
 \put(109,0){$i$}
 \put(70,0){\vector(0,1){60}}
 \put(70,61){$j$}
\put(73,6){\circle*{0.7} }
 \put(73,7){\tiny{$(1,2)$}}
\put(97,3){\circle*{0.7} }
 \put(97,4){\tiny{$(k,1)$}}
\put(98,3){\circle*{0.7}}
 \put(94,6){\circle*{0.7} }
 \put(94,7){\tiny{$(k-1,2)$}}
\put(91,12){\circle*{0.7} }
 \put(91,13){\tiny{$(k-2,4)$}}
 \put(88,18){\circle*{0.7} }
 \put(89,18){\tiny{$(k-3,6)$}}
\put(70,54){\circle*{0.7}}
 \put(71,54){\tiny{$(0,2k)$}}
 \multiput(91,6)(1,0){2}{\line(1,0){0.5}}
 \multiput(91,6)(0,2){4}{\line(0,1){0.5}}
  \multiput(88,12)(1,0){2}{\line(1,0){0.5}}
 \multiput(88,12)(0,2){4}{\line(0,1){0.5}}
\put(73,48){\circle*{0.7}} \put(74,48){\tiny{$(1,2k-2)$}}
 \put(76,42){\circle*{0.7}}
\multiput(70,48)(1,0){2}{\line(1,0){0.5}}
\multiput(73,42)(1,0){2}{\line(1,0){0.5}}
\multiput(73,42)(0,2){4}{\line(0,1){0.5}}
 \put(85,24){\circle*{0.7} }
  \put(82,30){\circle*{0.7} }

\end{picture}

\bigskip

\begin{example}
Consider the following parameterization
$$\left\{\begin{array}{ccl}
u_1(T_1,T_2)&=&T_1^5+T_2^5+T_1^4T_2\\
u_2(T_1,T_2)&=&T_1^3T_2^2\\
u_3(T_1,T_2)&=&T_1^5-T_2^5,
\end{array}\right.
$$ 
whose inverse can easily be found as
$$\F(\X)=(4 X_1^2  +X_2 X_1 +4 X_1 X_3 +16 X_2^2  +X_2 X_3,\,4 X_1^2 + 6 X_1 X_2 +  X_2^2  + 2 X_2  X_3 - 4 X_3^2).
$$
We have here $d=5,\, \mu=2$ and with the aid of a computer software find the following $\mu$-basis:
$$\begin{array}{ccl}
p_{2,1}(\T,\X)&=&    2 T_1^2  X_2 + T_2 T_1 X_2 - T_2^2  X_1 - T_2^2  X_3,\\
q_{3,1}(\T,\X)&=& 8 T_1^3  X_1 - 8 T_1^3  X_3 - 4 T_1^2  T_2 X_1 - 4 T_1^2  T_2 X_3 + 2 T_1 T_2^2  X_1 + T_2 ^2 T_1 X_2 \\
& &+ 2 T_1 T_2^2  X_3 - T_2^3  X_1 - 16 T_2^3  X_2 - T_2^3  X_3.
\end{array}
$$
Now we can perform the algorithm given in Section \ref{odd}, write
$$P_2(\T,\X):=p_{2,1}(\T,\X)=( 2 T_1  X_2 + T_2  X_2)T_1 +(- T_2  X_1 - T_2  X_3)T_2,$$
and set
$$
\begin{array}{ccl}
P_1(\T,\X)&=&( 2 T_1  X_2 + T_2  X_2)F_1(\X) +(- T_2  X_1 - T_2  X_3)F_2(\X)\\ 
&=&32 T_1X_2^3  + 8 T_1 X_1^2 X_2  + 2 T_1X_1 X_2^2  + 8 T_1X_1 X_2  X_3 + 2 T_1X_2^2  X_3  \\ && + 16 T_2X_2^3  - 2 T_2X_1^2  X_2  
- 4 T_2X_1 X_2  X_3 - 4 T_2X_1^3  + 4 T_2 X_1 X_3^2  \\ & &- 4 T_2 X_1^2  X_3 - 2 T_2X_2 X_3^2   
+ 4 T_2 X_3^3.
\end{array}
$$
We perform the same operations on $P_1(\T,\X)$ to get the implicit equation:
$$\begin{array}{ccl}
P_0(\T,\X)&=&16\big( -X_1^5  + 33 X_2^5  - X_1^4 X_3  + 3 X_1^2  X_2  X_3^2 + 16 X_1 X_2^3  X_3 \\ &&+ X_1^3 X_2  X_3 - X_3^5  - 4 X_2^3  X_3^2  - X_1  X_3^4 + 2 X_1^3  X_3^2  + 2 X_1^2  X_3^3  
\\ &&+ 20 X_1^2  X_2^3  + 6 X_2^4  X_3 + 10 X_1 X_2^4  + X_2 X_3^4  + 3 X_1  X_2 X_3^3\big).
\end{array}
$$
By Theorem \ref{mtodd}, a minimal set of generators of $\kK$ is given by the five polynomials $p_{2,1}(\T,\X),\,q_{3,1}(\T,\X),\,P_1(\T,\X),\,P_0(\T,\X)$ and 
$T_1F_2(\X)-T_2F_1(\X).$
\end{example}

\medskip
\begin{example}\label{d6}
Set $d=6$ and consider
$$\left\{\begin{array}{ccl}
u_1(T_1,T_2)&=&T_1^6+T_1^5T_2\\
u_2(T_1,T_2)&=&T_1^3T_2^3\\
u_3(T_1,T_2)&=&T_2^6.
\end{array}\right.
$$ 
By computing explicitly a Gr\"obner basis of $\ker(\mathfrak{h})$ we get $\mu=3$ and the following $\mu$-basis:
$$\begin{array}{ccl}
p_{3,1}(\T,\X)&=&    T_1^3X_3-T_2^3X_2,\\
q_{3,1}(\T,\X)&=& T_2^3X_1-T_1^3X_2-T_1^2T_2X_2.
\end{array}
$$
A quadratic inverse can be found also as part of the Gr\"obner basis of $\kK$:
$$T_1F_2(\X)-T_2F_1(\X)= T_1(X_1X_3-X_2^2)-T_2X_2^2,
$$
and we can also detect a moving conic of degree $2$ in $\T$ which is not a multiple of the latter:
$$Q(\T,\X)=    T_1^2  X_2 X_3 - T_2^2  X_1 X_3 + T_2^2  X_2^2.
$$
Now we have all the ingredients to start with the algorithm presented in Section \ref{even}: set $P_2(\T,X):=Q(\T,\X)$, and write
$$P_2(\T,\X)=(T_1  X_2 X_3)T_1 +(- T_2  X_1 X_3 + T_2  X_2^2)T_2.
$$
Then, we have 
$$\begin{array}{ccl}
P_1(\T,\X)&:=&(T_1  X_2 X_3)F_1(\X) +(- T_2  X_1 X_3 + T_2  X_2^2)F_2(\X)\\
&=&(T_1  X_2 X_3)X_2^2 +(- T_2  X_1 X_3 + T_2  X_2^2)(X_1X_3-X_2^2)\\
&=&X_2^3X_3T_1-(X_1X_3-X_2^2)^2T_2;
\end{array}
$$
and finally we get
$$\begin{array}{ccccl}
E(\X)&=&P_0(\T,\X)&:=&X_2^3X_3F_1(\X)-(X_1X_3-X_2^2)^2F_2(\X)\\
&&&=&X_2^5X_3-(X_1X_3-X_2^2)^3
\end{array}
$$
which is the implicit equation of the curve. Theorem \ref{mteven} tells us now that a minimal set of generators of $\kK$ is given by
$p_{3,1}(\T,\X),\,q_{3,1}(\T,\X),\,P_2(\T,\X),\,P_1(\T,\X),\,E(\X).
$
\end{example}

\bigskip
\section{Conclusions and Open Problems}\label{conclu}
We have described in detail the geometric features of rational curves parameterizable by conics and the algebraic aspects of their parameterizations.
It would be interesting to get a similar description of families  of curves parameterizable by forms of low degree. For simplicity, we will set our open questions and
remarks for curves parameterizable by cubics (i.e.\  $\deg(\F(\X))=3$ in \eqref{inverse}), but of course the interest is to get a description for
general curves of degree $d$ parameterizable by forms of degree $d',\,d'\ll d$.

\begin{itemize}
 \item In Proposition \ref{deg2} and Remark \ref{elunico} we have shown that the only pairs of quadratic forms $(F_1(X_1,X_2),\,F_2(X_1,X_2))$ without common factors not inducing a birational application $\C\dasharrow\P^1$
for any plane curve $\C$ are those such that $T_1F_2(X_1,X_2)-T_2F_1(X_1,X_2)$ defines a degenerate conic in $\P^1_{\K(\T)}$. Is there a geometric or algebraic
condition analogue to this for pairs of cubics in $\K(\X)$? 
\item The description of the family of all rational parameterizations induced by a given inverse map $\psi$ in Proposition \ref{paramparam} is based on the
fact that the every nondegenerate conic in any projective plane over an algebraically closed field is parameterizable. Which is the analogue of this
fact for cubics? What is the equivalent of ``nondegenerate  conic'' in the case of cubics? 
\item One can prove a more general statement in one of the directions of Theorem \ref{mtmt}: if $\Lambda:\P^2\dasharrow\P^2$ is a birational transformation 
whose inverse is given by three cubics $F_1(\X),\,F_2(\X),\,F_3(\X)$, and $\C_0$ is a curve parameterizable by lines with singularity at $(0:0:1)$, then $\overline{\Lambda(\C_0)}$
is a curve parameterizable by cubics. Are these all of them? Note that for a regular sequence of homogeneous forms $F_1(\X),\,F_2(\X)$ of degree $3,$
the variety $\V(\F(\X))$ has cardinality $9$ counted with multiplicities. It turns out that $F_1(\X),\,F_2(\X),\,F_3(\X)$ defines a birational map if and only if
$\V(F_1(\X),\,F_2(\X),\,F_3(\X))$ has cardinality $8$ (counted with multiplicities). But Cayley-Bacharach Theorem (\cite{EGH96}) implies that any form
$F_3(\X)$ of degree $3$ vanishing in all but one point of $\V(\F(\X))$ must vanish in all of them. So, in principle ``extending'' a general regular sequence of
two cubics to an automorphism of $\P^2$ given by cubics as it was done in Proposition \ref{quadratic}, cannot be done straightforwardly.
\item The computation of minimal generators of ${\mathfrak h}$ in Section \ref{Rees} involved one moving conic that follows the parameterization whose knowledge
comes from the method of moving conics. There is no known systematic method for moving cubics so far. How can we detect forms of lower degree in $\X$ in order
to produce elements like the $Q(\T,\X)$ described in Proposition \ref{mconic}?
\end{itemize}
We hope that we shall be able to answer these questions in future papers.


\begin{thebibliography}{XXXXXX}
\bibitem[Al02]{alberich}
Alberich-Carrami\~nana, Maria.
\newblock{\em Geometry of the plane Cremona maps.\/} 
\newblock Lecture Notes in Mathematics, 1769. Springer-Verlag, Berlin, 2002.

%
\bibitem[Bus09]{bus09}
Bus\'e, Laurent.
\newblock{\em On the equations of the moving curve ideal of a rational algebraic plane curve.\/}
\newblock  J. Algebra  321  (2009),  no. 8, 2317--2344.
%
%

%
%
\bibitem[BJ03]{BJ03}
Bus\'e, Laurent; Jouanolou, Jean-Pierre.
\newblock{\em  On the closed image of a rational map and the implicitization problem.\/}
\newblock  J. Algebra  265  (2003),  no. 1, 312--357.


\bibitem[CCL05]{CCL05}
Chen, Falai; Cox, David; Liu, Yang.
\newblock{\em The $\mu$-basis and implicitization of a rational parametric surface.\/}
\newblock  J. Symbolic Comput.  39  (2005),  no. 6, 689--706.

\bibitem[CWL08]{CWL08}
Chen, Falai; Wang, Wenping; Liu, Yang.
\newblock {\em Computing singular points of plane rational curves.\/}
\newblock J. Symbolic Comput.  43  (2008),  no. 2, 92--117.

\bibitem[CD10]{CD10}
Cortadellas Ben\'itez, Teresa; D'Andrea, Carlos.
\newblock{\em Minimal generators of the defining ideal of the Rees Algebra associated to monoid parametrizations.\/}
\newblock Computer Aided Geometric Design, Volume 27, Issue 6, August 2010, Pages 461-473.


\bibitem[Cox08]{cox08}
Cox, David A.
\newblock {\em The moving curve ideal and the Rees algebra.\/}
\newblock Theoret. Comput. Sci.  392  (2008),  no. 1-3, 23--36.

\bibitem[CGZ00]{CGZ00}
Cox, David; Goldman, Ronald; Zhang, Ming.
\newblock{\em On the validity of implicitization by moving quadrics of rational surfaces with no base points.\/}
\newblock  J. Symbolic Comput.  29  (2000),  no. 3, 419--440.

\bibitem[CHW08]{CHW08}
Cox, David; Hoffman, J. William; Wang, Haohao.
\newblock {\em Syzygies and the Rees algebra.\/}
\newblock J. Pure Appl. Algebra  212  (2008),  no. 7, 1787--1796.

\bibitem[CKPU11]{CKPU11}
Cox,David; Kustin, Andrew; Polini, Claudia; Ulrich, Bernd. 
\newblock{\em A study of singularities on rational curves via syzygies.\/}
\newblock {\tt arXiv:1102.5072}

%

\bibitem[CSC98]{CSC98}
Cox, David A.; Sederberg, Thomas W.; Chen, Falai.
\newblock {\em The moving line ideal basis of planar rational curves.\/}
\newblock Comput. Aided Geom. Design  15  (1998),  no. 8, 803--827.

%

\bibitem[EGH95]{EGH96}
Eisenbud, David; Green, Mark; Harris, Joe.
\newblock{\em Cayley-Bacharach theorems and conjectures.\/}
\newblock Bull. Amer. Math. Soc. (N.S.) 33 (1996), no. 3, 295–324. 

\bibitem[Ful69]{ful69}
Fulton, William.
\newblock{\em Algebraic curves. An introduction to algebraic geometry.\/} Notes written with the collaboration of Richard Weiss. 
\newblock Reprint of 1969 original. Advanced Book Classics. Addison-Wesley Publishing Company, Advanced Book Program, Redwood City, CA, 1989.




\bibitem[HKT08]{HKT08}
Hirschfeld, J. W. P.; Korchm\'aros, G.; Torres, F. 
\newblock{\em Algebraic curves over a finite field.\/}
\newblock Princeton Series in Applied Mathematics. Princeton University Press, Princeton, NJ, 2008.

\bibitem[HW10]{HW10}
Hoffman, J. William; Wang, Haohao.
\newblock{\em Defining equations of the Rees algebra of certain parametric surfaces.\/}
\newblock Journal of Algebra and its Applications, Volume: 9, Issue: 6(2010), 1033--1049 

\bibitem[HSV08]{HSV08}
Hong, Jooyoun; Simis, Aron; Vasconcelos, Wolmer V.
\newblock{\em On the homology of two-dimensional elimination.\/}
\newblock  J. Symbolic Comput.  43  (2008),  no. 4, 275--292.

\bibitem[HSV09]{HSV09}
Hong, Jooyoun; Simis, Aron; Vasconcelos, Wolmer V.
\newblock{\em Equations of almost complete intersections.\/}
\newblock {\tt  arXiv:0906.1591v1 }

\bibitem[JLP08]{JLP08}
Johansen, P\aa l Hermunn; L\o berg, Magnus; Piene, Ragni.
\newblock {\em Monoid hypersurfaces.\/}
\newblock Geometric modeling and algebraic geometry,  55--77, Springer, Berlin, 2008.

\bibitem[KPU09]{KPU09}
Kustin, Andrew R.; Polini, Claudia; Ulrich, Bernd.
\newblock{\em Rational normal scrolls and the defining equations of Rees Algebras.\/}
\newblock To appear in  J. Reine Angew. Math.

%
\bibitem[SC95]{SC95}
Sederberg, Thomas; Chen, Falai.
\newblock{\em Implicitization using moving curves and surfaces.\/}
\newblock Proceedings of SIGGRAPH, 1995, 301--308.

\bibitem[SGD97]{SGD97}
Sederberg, Tom; Goldman, Ron; Du, Hang.
\newblock{\em  Implicitizing rational curves by the method of moving algebraic curves.\/}
\newblock Parametric algebraic curves and applications (Albuquerque, NM, 1995).
\newblock J. Symbolic Comput.  23  (1997),  no. 2-3, 153--175.


\bibitem[SK52]{SK52}
Semple, J. G.; Kneebone, G. T. 
\newblock{\em Algebraic projective geometry.\/} 
\newblock Oxford, at the Clarendon Press, 1952. 



\bibitem[SWP08]{SWP08}
Sendra, J. Rafael; Winkler, Franz; P\'erez-D\'iaz, Sonia.
\newblock{\em Rational algebraic curves. A computer algebra approach.\/}
\newblock Algorithms and Computation in Mathematics, 22. Springer, Berlin, 2008.

%

\bibitem[Wal50]{walker}
Walker, R.J.
\newblock{\em Algebraic Curves.\/}
\newblock Princeton Univ. Press, 1950.


\bibitem[ZCG99]{ZCG99}
Zhang, Ming; Chionh, Eng-Wee; Goldman, Ronald N.
\newblock{\em On a relationship between the moving line and moving conic coefficient matrices.\/}
Comput. Aided Geom. Design  16  (1999),  no. 6, 517--527.
\end{thebibliography}
\end{document}